\numberwithin{equation}{section}
\newtheorem{thm}{Theorem}[section]
\newtheorem{prop}[thm]{Proposition}
\newtheorem{lem}[thm]{Lemma}
\newtheorem{rem}[thm]{Remark}
\newtheorem{cor}[thm]{Corollary}
\newtheorem{conj}[thm]{Conjecture}
\theoremstyle{definition}
\newtheorem{example}[thm]{Example}
\newtheorem{remark}[thm]{Remark}
\newcommand{\C}{{\mathbb C}}
\newcommand{\Z}{{\mathbb Z}}
\newcommand{\Q}{{\mathbb Q}}
\newcommand{\cA}{{\mathcal A}}
\newcommand{\E}{{\mathcal E}}
\newcommand{\F}{\mathcal F}
\newcommand{\cH}{\mathcal H}
\newcommand{\cK}{\mathcal{K}}
\newcommand{\cW}{\mathcal{W}}
\newcommand{\Rc}{\check{R}}
\newcommand{\bs}{\boldsymbol}
\newcommand{\gl}{\mathfrak{gl}}
\newcommand{\sln}{\mathfrak{sl}}
\newcommand{\bI}{{\mathbf{I}}}
\newcommand{\Eb}{\bs E}
\newcommand{\La}{\Lambda}
\newcommand{\ssa}{\textsf{a}}
\newcommand{\sss}{\textsf{s}}
\newcommand{\ssv}{\textsf{v}}
\newcommand{\ssy}{\textsf{y}}
\newcommand{\ssA}{\textsf{A}}
\newcommand{\ssB}{\textsf{B}}
\newcommand{\ssC}{\textsf{C}}
\newcommand{\ssD}{\textsf{D}}
\newcommand{\ssCD}{\textsf{CD}}
\newcommand{\ssG}{\textsf{G}}
\newcommand{\ssK}{\textsf{K}}
\newcommand{\ssN}{\textsf{N}}
\newcommand{\ssR}{\textsf{R}}
\newcommand{\ssT}{\textsf{T}}
\newcommand{\End}{\mathop{\rm End}}
\newcommand{\id}{{\rm id}}
\newcommand{\FF}{\mathbb{F}}
\newcommand{\mc}{\mathcal}
\newcommand{\U}{U}
\newcommand{\cont}[2]{\contraction[1ex]{}{#1}{}{#2} #1 #2}
\newcommand{\ft}{\tilde{f}}
\renewcommand{\Rc}{\check{\ssR}}
\newcommand{\kfun}{\omega}
\begin{document}
\date{\today}

\begin{title}[Deformations of $\cW$ algebras]
{Deformations of $\cW$ algebras \\
via quantum toroidal algebras}
\end{title}
\author{B. Feigin, M. Jimbo, E. Mukhin, and I. Vilkoviskiy}
\address{BF: National Research University Higher School of Economics,  101000, Myasnitskaya ul. 20, Moscow,  Russia, and Landau Institute for Theoretical Physics, 142432, pr. Akademika Semenova 1a, Chernogolovka, Russia
}
\email{bfeigin@gmail.com}
\address{MJ: Department of Mathematics,
Rikkyo University, Toshima-ku, Tokyo 171-8501, Japan}
\email{jimbomm@rikkyo.ac.jp}
\address{EM: Department of Mathematics,
Indiana University Purdue University Indianapolis,
402 N. Blackford St., LD 270, 
Indianapolis, IN 46202, USA}
\email{emukhin@iupui.edu}
\address{IV: Center for Advanced Studies,
Skolkovo Institute of Science and Technology, 
1 Nobel St., 143026, Moscow, Russia, and
National Research University Higher School of Economics, 
Russian Federation,  101000,  
Myasnitskaya ul. 20,  Moscow,  Russia}
\email{reminguk@gmail.com}


\begin{abstract} 
We study the uniform description of deformed $\cW$ algebras of type $\ssA$ including the supersymmetric case 
in terms of the quantum toroidal $\mathfrak{gl}_1$ algebra $\E$. In particular, we recover the deformed 
affine Cartan matrices and the deformed integrals of motion.

We introduce a comodule algebra $\cK$ over $\E$ which gives a uniform construction of basic deformed 
$\cW$ currents and screening operators in types $\ssB,\ssC,\ssD$ including twisted and supersymmetric cases. 
We show that a completion of algebra $\cK$ contains three commutative subalgebras. In particular, it allows 
us to obtain a commutative family of integrals of motion associated with affine Dynkin diagrams of all 
non-exceptional types except $\ssD^{(2)}_{\ell+1}$. We also obtain in a uniform way deformed finite and  
affine Cartan matrices in all classical types together with a number of new examples, and discuss the 
corresponding screening operators.
\end{abstract}

\keywords{Quantum toroidal algebra; $W$ algebra; integrals of motion; $qq$ character.}

\maketitle

\section{Introduction}
Commutative families of operators coming from conformal field theory (CFT), 
known as local integrals of motion (IM), have attracted a lot of attention in the last quarter of a century. 
The interest was boosted by a seminal sequence of papers 
by Bazhanov, Lukyanov, and Zamolodchikov 
\cite{BLZ1}--\cite{BLZ3} where many related structures were revealed and a number of intriguing 
conjectures was put forth. One outcome of that study is the celebrated ODE/IM correspondence relating the spectrum of local IM to remarkable differential operators whose monodromy coefficients satisfy Bethe ansatz equations \cite{DT}, \cite{MRV}, \cite{BLZ4}, \cite{BLZ5}, see also \cite{BL2}. However, many important questions in the area still have no answers and many key statements remain conjectures. The explicit form of the local integrals of motion is unknown even in the simplest cases, see \cite{FF} for the discussion of their existence, and there are no theorems about their spectrum. 

From a general philosophy of quantum groups, it is quite  natural to consider $q$-deformations of local 
IM in search for clarification of the matters. It turns out that after the $q$-deformation, 
the type $\ssA$ local IM become non-local, but can be written down explicitly \cite{FKSW}. 
Moreover, one can describe their spectrum via Bethe ansatz, \cite{FJM}. 
In the present article we address the problem of constructing the $q$-deformation of local IM in other types.

\medskip

The algebraic structure underlying the CFT in question is given by the Virasoro algebra, and more generally, 
$\cW$ algebras. The $q$-deformations of $\cW$ algebras have been provided in \cite{AKOS} for type $\ssA$, and in \cite{FR1} for simple Lie algebras. It has been recognized in recent years that quantum groups provide a natural framework for studying $\cW$ algebras \cite{MO}.

\medskip

For the deformed $\cW$ algebras of type $\ssA$, the relevant quantum group is the quantum toroidal $\mathfrak{gl}_1$ algebra $\E$ (also known as the Ding-Iohara-Miki algebra), see Section \ref{sec E}. Algebra $\E$ has three Fock modules depending on a color $c\in\{1,2,3\}$, see Section \ref{sec Fock}. Consider generating current $e(z)$ of $\E$ acting on a tensor product of $\ell+1$ Fock modules associated to arbitrary colors $c_1,\dots, c_{\ell+1}$. Then $e(z)$ acts as a sum of $\ell+1$ vertex operators, see Section \ref{sec roots A}. When all Fock modules are of the same kind, the current $e(z)$ turns out to be related to the basic current $T(z)$ of the deformed $\cW$ algebra of type $\ssA_\ell$, as $e(z)=T(z)Z(z)$, where $Z(z)$ is a vertex operator written in a single boson $\{h_i\}$ given by the action of the Cartan current of $\E$ and which commutes with $T(z)$. For other choices of colors, see for example \eqref{example fock}, one gets currents which can be viewed as various $q$-deformations of $\cW$ algebras associated to Lie superalgebras of types $\mathfrak{gl}_{m|n}$ with $m+n=\ell+1$. 
In particular, we introduce the currents $A_i(z)$, $i=1,\ldots,\ell$, given by ratios of neighboring terms in this sum of $\ell+1$ vertex operators, see \eqref{A}. The current $A_i(z)$ is bosonic if $c_{i}=c_{i+1}$, and it is fermionic otherwise. Then the table of contractions between the $A_i(z)$'s gives the deformed Cartan matrix of finite type \eqref{A Cartan}. Moreover, the screening operators which are integrals of $q$-primitives of $A_i(z)$, see \eqref{Spm},\eqref{Sf}, commute with $T(z)$.

Algebra $\E$ has a family of commutative algebras depending on a parameter $\mu$ given by the transfer matrices. One can  ``dress" current $e(z)$ multiplying by an appropriate combination of Cartan current depending on $\mu$, see \eqref{dress}. Then
generators of the commutative algebras can be computed explicitly and are given by multiple integrals \eqref{integrals} of the dressed current $\bs e(z)$ with Feigin-Odesskii \cite{FO} kernel, \eqref{kfun}, see \cite{FJM}. These generators acting on tensor products of Fock spaces 
are deformations of the local IM associated to $\cW$ of type $\ssA$.
The spectrum of transfer matrices (or of deformed IM), is computed by Bethe ansatz, see \cite{FJMM1}, \cite{FJMM2} for two different derivations. 
Then one can attempt to obtain the spectrum of the original local IM by taking an appropriate limit, see \cite{FJM}.

The dressed current $\bs e(z)$ has the form $e(z)\tilde Z_\mu(z)$ where $\tilde Z_\mu(z)$ is a vertex operator written in terms of $\{h_i\}$. Then we observe that 
the ratio $A_0(z)$ of the last term in the dressed current $\bs e(z)$ and the first term shifted by $\mu$  produces one more screening operator which commutes with the deformed IM. Moreover, the matrix of contractions between all of $A_i(z)$ is given by the deformed Cartan matrix of affine type as in \cite{KP}.  Following the terminology of \cite{N}, \cite{KP}, we interpret the dressed current $\bs e(z)$ as a $qq$-character of the first fundamental representation of quantum affine $\sln_{\ell+1}$, see Section \ref{q char sec}.

\medskip

Then we develop a similar picture for non-exceptional types other than $\ssA$. 

We introduce an algebra $\cK$ depending on three parameters $q_1,q_2,q_3$ such that $q_1q_2q_3=1$,
which plays a role of quantum toroidal algebra, see \eqref{EE}-\eqref{EEE}.
Unlike $\E$, the algebra $\cK$ is not a Hopf algebra but a left $\E$ comodule, see \eqref{comodule formula}. 
Algebra $\cK$ has 6 representations in one boson denoted  $\F^{\ssB}_c$, see Proposition \ref{prop B boundary}, 
and $\F_c^{\ssC\ssD}$, $c\in\{1,2,3\}$, see Proposition \ref{prop CD boundary}, which we call boundary 
Fock modules of types $\ssB$  and $\ssC\ssD$, respectively. 
The algebra $\cK$ has a central element $C$ which determines the level of the module. 
We have $C^2=q_c^{1/2}$ in type $\ssB$ and  $C^2=q_c^{-1}$ in type $\ssC\ssD$.

The comodule structure allows us to consider a tensor product of a boundary Fock module of color $c_{\ell+1}$ with $\ell$ Fock modules of $\E$ of colors $c_1,\dots, c_\ell$. This $\cK$ module is realized in $\ell+1$ bosons.
The generating current $E(z)$ of $\cK$ acting on such a tensor product is a sum of $2\ell+1$ terms if the the boundary module is of type $\ssB$ and of
$2\ell$ vertex operators if the boundary module is of type $\ssC\ssD$. Similarly to type $A$, if all colors of $\E$ Fock modules are the same, $c_1=\dots=c_\ell$, 
then up to a boson we recover the deformed $\cW$ currents of \cite{FR1}. 
More precisely, if the boundary module is of type $\ssB$, we obtain a 
deformed $\cW$ current of type $\ssB$ when $c_{\ell+1}\neq c_\ell$, and of 
type ${\mathfrak{osp}}(1,2\ell)$ when $c_{\ell+1}=c_\ell$ (the latter is called $\ssA_{2\ell}^{(2)}$ type 
in \cite{FR1}). If the boundary module is of type $\ssC\ssD$, we recover the deformed $\cW$ current of type 
$\ssC$ when $c_{\ell+1}\neq c_\ell$, and of type $\ssD$ when $c_{\ell+1}= c_\ell$. 
If the colors of $\E$ Fock spaces vary, we get various $q$-deformations of $\cW$ currents related to supersymmetric cases. 
We again introduce currents $A_i(z)$ as ratios of neighboring terms (in terms of Dynkin diagram), 
see \eqref{A-i}-\eqref{D-ell}. 
Then we observe that the contractions give a deformed Cartan matrix of corresponding finite type, 
and that the screening operators constructed from $A_i(z)$ commute with our $E(z)$.

We define a dressed current $\Eb(z)$ and consider integrals of products of $\Eb(z)$ with the Feigin-Odesskii kernel similarly to type $\ssA$, see \eqref{bIn}. We show that these integrals commute if $C^2/\mu=q_{c_0}$, $c_0\in\{1,2,3\}$. Thus we have three families of commutative subalgebras in $\cK$ and three commuting families of operators acting on the representation. Naturally, we call them deformed IM. Note that the simplest deformed IM is always given by the constant term of the deformed $\cW$ current $\bs E(z)$.

We use dressed current $\bs E(z)$ to introduce current $A_0(z)$, see \eqref{B-0}-\eqref{D-0}, and observe that the contractions of $A_i(z)$ give deformed affine Cartan matrices. If $c_0\neq c_1$ then the affine root turns out to be of type $\ssC$ and if $c_0=c_1$ then of type $\ssD$. In particular, we cover that way all non-exceptional affine Dynkin diagrams except $\ssD^{(2)}_{\ell+1}$. 
Moreover, the screening operator constructed from $A_0(z)$ commutes with the deformed IM.
 
It is then natural to consider affine roots of type $\ssB$. 
It corresponds to the case when dressing parameter $\mu$ satisfies  $C^2/\mu=q_{c_0}^{-1/2}$. 
In this case we introduce current $A_0(z)$ in a similar way. 
Adding one more vertex operator to $\bs E(z)$ (that is  ``adding a one dimensional representation") 
we obtain a new current $\tilde\Eb(z)$. Then  $\tilde \Eb(z)$ commutes with screening operators constructed from $A_i(z)$, $i=1,\dots,\ell$, and the integral of $\tilde \Eb(z)$  commutes with the screening operator corresponding to $A_0(z)$. In particular, contractions of $A_i(z)$ lead to deformed affine Cartan matrices whose affine node is of type $\ssB$.

We do not know know how to include the integral of $\tilde \Eb(z)$ into a family of commuting operators directly.
However if the boundary module is of type $\ssC\ssD$, we can exchange the affine node with the $\ell$-th node. It turns out that the integral of $\tilde \Eb(z)$ coincides with the integral of a different deformed $\cW$ current for which the boundary module is of type $\ssB$ and the affine root is of types $\ssC$ or  $\ssD$, see Remark \ref{rem B}. Then the integral of the latter current is a part of the family of integrals of motion as before. Such a recipe does not work for $\ssD^{(2)}_{\ell+1}$ for which the end nodes are both of type $\ssB$.

The full list of deformed affine  Cartan matrices produced by our construction includes a number of new examples, 
see Appendix \ref{library}.

\bigskip

There are several questions  arising from our work which we plan to address in the future publications.

\begin{itemize}
\item The deformed integrals of motion related to $\ssD^{(2)}_{\ell+1}$ are not constructed yet. The affine roots of type $\ssB$ need an additional study in general.

\item We introduced the commuting algebras in $\cK$ as explicit integrals. We would like to construct these algebras from some version of transfer matrices and obtain their spectrum by a Bethe ansatz method. 

\item The nature of $\cK$ algebra needs to be clarified. We expect that it should be recognized as a twisted version of the quantum toroidal algebra. Also, it is interesting to understand the relation of $\cK$ to the universal $\cW$ algebras of types $\ssB$ and $\ssC,\ssD$ of \cite{KL}.

\item There are similar comodule algebras for quantum toroidal algebras $\E_n$ associated to $\mathfrak{gl}_n$. It is important to study the currents they produce and the corresponding integrals of motion.

\item The deformations of integrals of motion related to exceptional types seem to be related to other quantum algebras, see Section \ref{sec G2} for a discussion of type $G_2$.
\end{itemize}

\bigskip

The plan of the paper is as follows. 

In Section \ref{section E} we introduce our convention and review generalities 
about the quantum toroidal $\mathfrak{gl}_1$ algebra $\E$. 

Section \ref{section A} is devoted to deformed $\cW$ algebras and deformed integrals of motion of type $\ssA$.  Section \ref{sec roots A} reviews the non-affine case. In Section \ref{affine root A sec} we introduce the zeroth root current and recover deformed affine Cartan matrices. We discuss $qq$-characters in Section \ref{q char sec}, screenings in Section \ref{sec A screenings}, and the deformed IM in Section \ref{int A sec}.

In Section \ref{section BCD} we treat the case of other classical types.
We introduce the algebra $\cK$ (Section \ref{sec K}), along with its left $\E$ comodule structure (Section \ref{sec comodule})
and the boundary  
representations $\F^\ssB_c,\F^\ssCD_c$ (Section \ref{sec boundary}). We construct the deformed $\cW$ currents and obtain deformed finite type Cartan matrices in Section \ref{sec B roots}.  We discuss the affinization in Section \ref{sec B affine}. Section \ref{sec qq B} deals with $qq$-characters, Section \ref{sec B screenings} with screenings, and Section \ref{sec B integrals} with the deformed commuting integrals.

In Section \ref{section suppl} we make some additional remarks. We discuss
integrals of motion of Knizhnik-Zamolodchikov type in Section \ref{sec KZ} and  the situation in type $\mathsf{G}_2$ in Section \ref{sec G2}.

The text is followed by three appendices.
In Appendix \ref{proof app}, we give a proof of the commutativity of integrals of motion. Appendix \ref{sec:Kmat} discusses the existence of operator $\ssK(u)$.
In Appendix \ref{library} we give a library of deformed Cartan matrices obtained from $\cK$.

\section{The quantum toroidal algebra associated to $\gl_1$}\label{section E}
In this section we recall the quantum toroidal algebra associated to $\gl_1$ and its Fock modules.
\subsection{Relations} \label{sec E}
Fix $q_1,q_2,q_3\in\C^{\times}$ such that $q_1q_2q_3=1$. We also fix values of $\ln q_i$ 
and set $q_i^a=\exp(a\ln q_i)$ for all $a\in\C$. 
In this paper we assume that our choice of parameters is generic,  
meaning that $q_1^aq_2^b=1$ for $a,b\in\Z$ if and only if $a=b=0$. 

We use the notation
$$
s_c=q_c^{1/2}, \qquad t_c=s_c-s_c^{-1}, \qquad b_c=\frac{t_c}{t_1t_2t_3}\,
\qquad (c\in\{1,2,3\}).
$$
We also use
$$
g(z,w)=\prod_{i=1}^3(z-q_iw), \quad \bar{g}(z,w)=\prod_{i=1}^3(z-q_i^{-1}w)=-g(w,z)\,,
\qquad \kappa_r=\prod_{i=1}^3(1-q_i^r)\quad  (r\in\Z).
$$
Note that $\kappa_1=-t_1t_2t_3=\sum_{i=1}^3q^{-1}_i-\sum_{i=1}^3q_i$.

The quantum toroidal algebra $\E$ associated to $\gl_1$ is an associative algebra with parameters $q_1,q_2,q_3$
generated by coefficients of the currents
\begin{align*}
&e(z)=\sum_{n\in\Z}e_nz^{-n} \,,\quad f(z)=\sum_{n\in\Z}f_nz^{-n} \,,\quad
\psi^\pm(z)=\exp\bigl(\sum_{r>0}\kappa_r h_{\pm r}z^{\mp r}\bigr)\,,
\end{align*}
and an invertible 
central element\footnote{In the standard definition,  
the quantum toroidal algebra $\E$ associated to $\gl_1$ has two central elements. 
Here we set the second central element to $1$.}  $C$.
The defining relations are as follows.
\begin{align*}
&\psi^\pm(z)\psi^\pm(w) = \psi^\pm(w)\psi^\pm (z), 
\\
&\psi^+(z)\psi^-(w)
=
\psi^-(w)\psi^+ (z) 
\frac{g(z,C w)}{\bar{g}(z,Cw)}
\frac{\bar{g}(Cz,w)}{g(C z,w)}\,,
\\
&g(z,w)\psi^\pm(C^{(-1\mp1)/2}z)e(w)
=\bar{g}(z,w)e(w)\psi^\pm(C^{(-1\mp1) /2}z),
\\
&\bar{g}(z,w)\psi^\pm(C^{(-1\pm1)/2}z)f(w)=g(z,w)f(w)\psi^\pm(C^{(-1\pm1)/2}z)\,,
\\
&[e(z),f(w)]=
\frac{1}{\kappa_1}
(\delta\bigl(\frac{Cw}{z}\bigr)\psi^+(w)
-\delta\bigl(\frac{Cz}{w}\bigr)\psi^-(z)),\\
&g(z,w)e(z)e(w)=\bar{g}(z,w)e(w)e(z)\,, \\
&\bar{g}(z,w)f(z)f(w)=g(z,w)f(w)f(z)\,,\\
&\mathop{\mathrm{Sym}}_{z_1,z_2,z_3}z_2z_3^{-1}
[e(z_1),[e(z_2),e(z_3)]]=0\,,\\
&\mathop{\mathrm{Sym}}_{z_1,z_2,z_3}z_2z_3^{-1}
[f(z_1),[f(z_2),f(z_3)]]=0\,,
\end{align*}
where
\begin{align*}
&\mathop{\mathrm{Sym}}_{z_1,\ldots,z_N}\ F(z_1,\dots,z_N) =\frac{1}{N!}
\sum_{\pi\in\mathfrak{S}_N} F(z_{\pi(1)},\dots,z_{\pi{(N)}})\,.
\end{align*}

The relations for $\psi^\pm(z)$ are equivalent to
\begin{align}\label{hh}
[h_r,h_{s}]=\delta_{r+s,0}\frac{1}{\kappa_r}\frac{C^{r}-C^{-r}}{r} \,.
\end{align}

\medskip 

Let $\mc A$ be a $\Z$ graded algebra with a central element $C$.
The completion of $\mc A$ in the positive direction is the algebra $\tilde{\mc A}$, linearly spanned by products 
of series of the form $\sum_{i=M}^\infty f_ig_i$, where $M\in \Z$, $f_i,g_i\in\mc A$ and $\deg g_i=i$. 

We call an $\mc A$ module $V$ admissible if $V$ is $\Z$ graded with degrees bounded from above, i.e., 
$V=\oplus_{i=-\infty}^{N} V_{i}$, where $V_i=\{v\in V\mid \deg v=i\}$, 
and if $C$ is diagonalizable. The completion $\tilde {\mc A}$ acts on all admissible modules.

Algebra $\E$ has a $\Z$ grading given by 
$$
\deg e_i=\deg f_i=\deg h_i=i\,,\quad \deg C=0\,.
$$
In other words, if we formally set $\deg z=1$, then $e(z)$, $f(z)$, $\psi^\pm(z)$ all have degree zero.

Let $\E\tilde \otimes \E$ be the tensor algebra $\E\otimes \E$ completed in the positive direction. We use the topological coproduct $\Delta: \E\to \E\tilde \otimes \E$ 
as in \cite{FJM}:
\begin{align}
&\Delta e(z)=e(C_2^{-1}z)\otimes \psi^+(C_2^{-1}z)+1\otimes e(z)\,,\label{e coproduct}
\\
&\Delta f(z)=f(z)\otimes 1+ \psi^-(C_1^{-1}z)\otimes f(C_1^{-1}z)\,,\notag
\\
&\Delta\psi^+(z)=\psi^+(z)\otimes \psi^+(C_1z)\,,\label{psi+ coproduct}
\\
&\Delta\psi^-(z)=\psi^-(C_2z)\otimes \psi^-(z)\,,\notag
\end{align}
where $C_1=C\otimes1$, $C_2=1\otimes C$. 

Note that the coproduct can be extended to the map $\Delta: \tilde \E\to\E\tilde \otimes \E$.

The counit map $\epsilon:\E\to\C$ 
is given by $\epsilon\bigl(e(z)\bigr) =\epsilon\bigl(f(z)\bigr) =0$,  $\epsilon\bigl(\psi^\pm(z)\bigr) =1$, 
$\epsilon\bigl(C\bigr)=1$.

We set also
\begin{align*}
\tilde{f}(z)=S\bigl(f(z)\bigr)=- {\psi^-(z)}^{-1}f(C z)
\,
\end{align*}
where $S$ is the antipode. The coproduct reads
\begin{align*}
\Delta\ft(z)=\ft(C_2z)\otimes{\psi^-(z)}^{-1}+1\otimes\ft(z)\,.
\end{align*}

Note that the topological Hopf algebra $\E$ depends on $q_1,q_2,q_3$ symmetrically, in other words it depends on the unordered set $\{q_1,q_2,q_3\}$.

\medskip

\subsection{Vertex operators}
In this paper we often study vertex operators acting on bosonic Fock spaces. Here we give a brief description of these objects.

A Heisenberg algebra $\mc H$ is an associative algebra generated by linearly independent elements 
$h_{r}^{(i)}$, where $r\in\Z$, $i=1,\dots,N$, 
with relations $[h_{r}^{(i)},h_{s}^{(j)}]=\delta_{r+s,0}\,a_{r}^{ij}$, where $a_r^{i,j}\in\C$. 
Algebra $\mc H$ has a $\Z$ grading such that $\deg h_{r}^{(i)}=r$.

A weight $\bs\alpha$ is a linear functional $\bs\alpha:\textrm{span}_\C\{h_0^{(1)},\dots,h_0^{(N)} \}\to \C$. 
For a weight $\bs\alpha$, we define the Fock space $\FF_{\bs\alpha}$
as an $\mc H$ module generated by a vacuum vector $v_{\bs\alpha}$ such that $h^{(i)}_r v_{\bs\alpha}=0$ for $r>0$,  
$h^{(i)}_0v_{\bs\alpha}={\bs\alpha}(h^{(i)}_0)v_{\bs\alpha}$ for $i=1,\dots,N$, and such that 
$\FF_{\bs\alpha}$ is freely generated over $\C[h_r^{(i)}]_{r\in\Z_{<0}}^{i=1,\dots,N}$. 

Given  a weight $\bs\alpha$, let 
$e^{Q_{\bs\alpha}}: \FF_{\bs \mu}\to \FF_{\bs \mu+\bs\alpha}$ 
be a linear operator such that $e^{Q_{\bs\alpha}} v_{\bs\mu}=v_{\bs\mu+\bs\alpha}$ 
and such that $[e^{Q_{\bs\alpha}},h_r^{(i)}]=0$, $r\neq 0$. We have
$$
e^{Q_{\bs\alpha}}z^{\ssv_0}= z^{-{\bs\alpha}(\ssv_0)} z^{\ssv_0}e^{Q_{\bs\alpha}}\,,
\qquad \ssv_0\in \textrm{span}_\C\{h_0^{(1)},\dots,h_0^{(N)} \}\,.
$$

\medskip

In this paper, by a vertex operator we mean a formal series $V(z)$ 
of the form 
\begin{align}\label{VO}
V(z)=b\exp\big(\sum_{r>0}\ssv_{-r}z^r\big)\exp\big(\sum_{r\geq  0}\ssv_rz^{-r}\big), 
\qquad  \ssv_r\in \textrm{span}_\C\{h_r^{(1)},\dots,h_r^{(N) }\}\quad  (r\in\Z),
\end{align}
where $b\in\C^\times$. For any $v\in\FF_{\bs\mu}$, $V(z)v$ is a well defined Laurent series in $z$ with values in  $\FF_{\bs\mu}$.

The product of \eqref{VO} with another  vertex operator 
$V'(w)=b'\exp(\sum_{r>0} \ssv_{-r}'w^r)\exp(\sum_{r\geq 0} \ssv_r' w^{-r})$ has the form
\begin{align*}
&V(z)V'(w)=\varphi_{V,V'}(w/z):V(z)V'(w):\,,
\end{align*}
where $\varphi_{V,V'}(w/z)\in\C[[w/z]]$ is a formal power series called the contraction of $V(z)$ and $V'(w)$, and  the vertex operator
\begin{align*}
&:V(z)V'(w):\,=bb'\exp\big(\sum_{r>0}( \ssv_{-r}z^r+\ssv_{-r}'w^r)\big)
\exp\big(\sum_{r\geq 0} (\ssv_r z^{-r}+\ssv_r'w^{-r})\big)
\end{align*}
is called the normal ordered product of $V(z)$ and $V'(w)$. Obviously  $:V(z)V'(w):\,\,=\,\,:V'(w)V(z):$.
\medskip

Our vertex operators will depend on various parameters, $q_1$, $q_2$, $\mu$, $C$, etc. One can think 
of these parameters as variables or as generic complex numbers.
We call $\alpha$ a formal monomial if $\alpha$ is a product of parameters in rational powers. 
For example $\alpha$ can be of the form $q_1^{a_i}q_2^{b_i}$ with $a_i,b_i\in\Q$.

We often study the case when the contraction has the form 
\begin{align}
\varphi_{V,V'}(w/z)=\prod_i(1-\alpha_iw/z)^{-n_i}
\label{contVV}
\end{align}
where $n_i\in\Z$ and  $\alpha_i$ are formal monomials. Equation \eqref{contVV} is equivalent to saying
$$
[\ssv_r,\ssv_s']=\delta_{r+s,0}\ \frac{ \sum_{i}n_i\alpha_i^r}{r} \qquad (r\in\Z_{>0}).
$$
We use the notation
$$
\mathcal{C}\bigl(V(z),V'(w)\bigr) = \sum_{i}n_i\alpha_i
$$
to represent \eqref{contVV}, and by abusing the language we also call the sum in the right hand side the contraction of $V(z)$ and $V'(w)$. 

Note that $\mathcal{C}\bigl(V(pz),V'(p'w)\bigr)=\mathcal{C}\bigl(V(z),V'(w)\bigr)\cdot p'/p $.

We call a contraction rational if $\sum_in_i\alpha_i$ has a finite number of summands.
We call a contraction elliptic if we can write $\sum_in_i\alpha_i=\sum_jm_j\beta_j/(1-\beta)$ where $m_j\in \Z$, $\beta_i,\beta$ are formal monomials, and the summation over $j$ is finite. 
\medskip

We also often study screening currents  and screening operators.
Screening currents $S(z)$ have the form
\begin{align}\label{screening current}
S(z)=
e^{Q_{\bs\alpha}} z^{\sss_0} \exp\big(\sum_{r>0}\sss_{-r}z^r\big)\exp\big(\sum_{r>0}\sss_rz^{-r}\big), 
\qquad  \sss_r\in \textrm{span}_\C\{h_r^{(1)},\dots,h_r^{(N) }\}\  (r\in\Z),
\end{align}
where 
$\bs\alpha$ is a weight.  In particular, $S(z)=e^{Q_{{\bs\alpha}}} z^{\sss_0}S^{osc}(z)$ 
where $S^{osc}(z)$ is a vertex operator with $\bar \sss_0=0$.

Given a screening current $S(z)=e^{Q_{{\bs\alpha}}} z^{\sss_0}S^{osc}(z)$ and a vertex operator $V(w)$, the normal ordering is given by
$:S(z)V(w):\,\,=\,\,:V(w)S(z):\,\,=e^{Q_{{\bs\alpha}}} z^{\sss_0} :S^{osc}(z) V(w):$.

The screening operator $S$ is the constant term of $zS(z)$:
\begin{align}
S=\frac{1}{2\pi i}\int S(z)\, dz\,.
\label{Sc-op}
\end{align}
Note that $S$ is a well defined operator $\FF_{\bs\mu} \to \FF_{\bs\mu+\bs\alpha}$ if and only if $\bs\mu(s_0)\in\Z$.

\medskip

Let $V(z)$, $V'(z)$ be vertex operators of the form \eqref{VO}, and set 
\begin{align*}
A(z)=\,\,:V(z)V'(z)^{-1}:.
\end{align*}
Assume that there are formal monomials $p_1,p_2,p_3$ such that
\begin{align}\label{condition}
&\mathcal{C}\bigl(A(z),V(w)\bigr)=-(1-p_1^2)(1-p_2^2)\,,
\quad 
\mathcal{C}\bigl(V(w),A(z)\bigr)=-(1-p_1^{-2})(1-p_2^{-2})\,,
\\
&\mathcal{C}\bigl(A(z),V'(w)\bigr)=(1-p_2^{-2})(1-p_3^{-2})\,,
\quad 
\mathcal{C}\bigl(V'(w),A(z)\bigr)=(1-p_2^2)(1-p_3^2)\,.\notag
\end{align}
For these equations to be consistent, we must have $(1-p_2^2)(p_1^2-p_3^2)(1-p_1^{-2}p_2^{-2}p_3^{-2})=0$. 
Excluding the trivial case $p_2^2=1$, we shall assume that either $p_1=p_3$ or $p_1p_2p_3=1$.
We also assume that if $v_0=a v_0'$ for some $a\in\C$ then $a=-\log p_1/\log p_3$.

Then we can construct a screening operator commuting with $V(z)+V'(z)$ as follows.

We define the screening current $S(z)$ of the form \eqref{screening current} by 
\begin{align}\label{screening}
A(z)=\frac{1-p_3^2}{1-p_1^2}p_2^{-2}p_3^{-2}:S(p_2^{-1}z)S(p_2z)^{-1}:.
\end{align}
This amounts to the relations
\begin{align*}
&\ssa_r=\ssv_r-\ssv'_r=(p_2^r-p_2^{-r})\sss_r \qquad (r\neq 0),\\
&
e^{\ssa_0}=e^{\ssv_0-\ssv_0'}
=\frac{b'}{b}\, \frac{1-p_3^2}{1-p_1^2}p_2^{-2}p_3^{-2}p_2^{-2\sss_0}.
\end{align*}
In addition we choose a weight $\bs\alpha$ so that 
\begin{align}
\bs\alpha(\ssv_0)=2\log p_1,  \qquad \bs\alpha(\ssv_0')= -2\log p_3\,.
\label{choose-wt}
\end{align}
Then we have the following well known lemma.
\begin{lem}\label{screening lemma}
When the screening operator $S$ is well defined, we have
$$
[S,V(z)+V'(z)]=0.
$$
\end{lem}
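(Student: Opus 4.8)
The plan is to prove that $[S,V(w)]$ and $[S,V'(w)]$ are each a single, explicitly computable vertex operator, and then to check that these two vertex operators are proportional with cancelling coefficients. Writing $S=\frac{1}{2\pi i}\int S(z)\,dz$ as in \eqref{Sc-op}, I would compute each commutator from the two radial orderings $S(z)V(w)$ and $V(w)S(z)$, so the first task is to determine the corresponding contraction functions.

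First I would extract the oscillator contractions of $S(z)$ with $V(w)$ and $V'(w)$ from \eqref{condition}. Using $A(z)=\,:V(z)V'(z)^{-1}:$ together with the relation $\ssa_r=(p_2^r-p_2^{-r})\sss_r$ coming from \eqref{screening}, a short computation turns the four conditions in \eqref{condition} into
$$\mathcal C\bigl(S(z),V(w)\bigr)=p_2(1-p_1^2),\qquad \mathcal C\bigl(V(w),S(z)\bigr)=p_2^{-1}(1-p_1^{-2}),$$
and likewise $\mathcal C(S(z),V'(w))=p_2^{-1}(1-p_3^{-2})$, $\mathcal C(V'(w),S(z))=p_2(1-p_3^2)$. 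Hence the two orderings of $S(z)$ and $V(w)$ are governed by $\frac{1-p_1^2p_2w/z}{1-p_2w/z}$ and its partner, which are the expansions in $|w|<|z|$ and in $|z|<|w|$ of one and the \emph{same} rational function up to the scalar $p_1^{-2}$.

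The key point — and the step I expect to require the most care — is the bookkeeping of the zero modes $z^{\sss_0}$ and $e^{Q_{\bs\alpha}}$. Since $V,V'$ preserve each $\FF_{\bs\mu}$, the factor $z^{\sss_0}$ contributes the same power of $z$ in both orderings, whereas commuting $V(w)$ past $e^{Q_{\bs\alpha}}$ produces exactly $e^{\bs\alpha(\ssv_0)}=p_1^2$ by the normalization \eqref{choose-wt}. This scalar precisely cancels the $p_1^{-2}$ discrepancy above, so that $S(z)V(w)$ and $V(w)S(z)$ become expansions of a single rational function; their difference is therefore a delta function,
$$S(z)V(w)-V(w)S(z)=(1-p_1^2)\,\delta(p_2w/z)\,:S(z)V(w):,$$
and similarly $S(z)V'(w)-V'(w)S(z)=(1-p_3^{-2})\,\delta(p_2^{-1}w/z)\,:S(z)V'(w):$, the matching factor for $V'$ being $e^{\bs\alpha(\ssv'_0)}=p_3^{-2}$. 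Integrating in $z$ collapses the delta functions and yields $[S,V(w)]=(1-p_1^2)(p_2w):S(p_2w)V(w):$ and $[S,V'(w)]=(1-p_3^{-2})(p_2^{-1}w):S(p_2^{-1}w)V'(w):$.

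Finally I would compare the two outputs. From $V(w)=\,:A(w)V'(w):$ and the defining relation \eqref{screening}, the inner factors $S(p_2w)^{\pm1}$ cancel inside the normal ordering, giving
$$:S(p_2w)V(w):\ =\ \frac{1-p_3^2}{1-p_1^2}\,p_2^{-2}p_3^{-2}\,:S(p_2^{-1}w)V'(w):.$$
Substituting this into $[S,V(w)]$ and adding $[S,V'(w)]$, the common vertex operator $:S(p_2^{-1}w)V'(w):$ factors out with coefficient proportional to $(1-p_3^2)p_3^{-2}+(1-p_3^{-2})=0$. Hence $[S,V(w)+V'(w)]=0$, as claimed. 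The hypothesis that $S$ be well defined (that is, $\bs\mu(\sss_0)\in\Z$) is used only to guarantee that the $z$-integral against the delta function is legitimate and lands in $\FF_{\bs\mu+\bs\alpha}$.
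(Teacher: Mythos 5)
Your proposal is correct and follows essentially the same route as the paper's proof: compute $S(z)V(w)$ and $V(w)S(z)$ as expansions of one rational function differing by the zero-mode factor $p_1^{2}$, extract the delta-function commutators $[S,V(w)]=(1-p_1^2)p_2w:S(p_2w)V(w):$ and $[S,V'(w)]=(1-p_3^{-2})p_2^{-1}w:S(p_2^{-1}w)V'(w):$, and cancel them via the normal-ordered identity coming from \eqref{screening}. The extra detail you supply on deriving the contractions of $S(z)$ from \eqref{condition} and on the zero-mode bookkeeping is consistent with the paper, which leaves those steps implicit.
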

\begin{proof}
We have
\begin{align*}
&S(z)V(w)=\frac{1-p_1^2p_2w/z}{1-p_2w/z} :S(z)V(w):, \qquad 
V(w)S(z)=\frac{1-p_1^{-2}p_2^{-1}z/w}{1-p_2^{-1}z/w}p_1^{2} :S(z)V(w):\,,
\\
&S(z)V'(w)=\frac{1-p_3^{-2}p_2^{-1}w/z}{1-p_2^{-1}w/z} :S(z)V'(w):, \qquad 
V'(w)S(z)=\frac{1-p_3^{2}p_2z/w}{1-p_2z/w}p_3^{-2} :S(z)V'(w):\,.
\end{align*}
It follows that 
$[S(z),V(w)]=(1-p_1^2)\delta(p_2 w/z):S(z)V(w)$, where $\delta(z)=\sum_{i\in\Z}z^i$ is the formal delta function. 
Integrating over $z$, we obtain $[S,V(w)]=(1-p_1^2)p_2w:S(p_2 w)V(w):$.
Similarly, we obtain $[S,V'(w)]=(1-p_3^{-2})p_2^{-1}w:S(p_2^{-1}w)V'(w):$. 

On the other hand, from the definitions we have 
\begin{align*}
:S(p_2w)V(w):\,\,=\frac{1-p_3^2}{1-p_1^2}p_2^{-2}p_3^{-2}:S(p_2^{-1}w)V'(w):.
\end{align*}
Hence the lemma follows.
\end{proof}
We note that in \eqref{condition} one can replace $A(z)$ with $A(pz)$ for any formal monomial $p$. Then the screening current $S(z)$ will be also shifted to $S(pz)$ and $S$ will be replaced by a constant multiple $p^{-1}S$. Therefore Lemma \ref{screening lemma} will still hold.

When $p_1=p_3\neq p_2$, the right hand sides of \eqref{condition} become symmetric in $p_1$ and $p_2$.
Interchanging the roles of $p_1$ and $p_2$, one can construct another screening operator commuting with $V(z)+V'(z)$. 

It is easy to check  that if  one has  $p_1p_2p_3=1$ in \eqref{condition}, 
then the corresponding screening current is an ordinary fermion  \cite{BFM}. 
\medskip

\subsection{The Fock modules}\label{sec Fock}
Algebra $\E$ has three families of Fock representations 
$\F_c(u)$, where $c\in\{1,2,3\}$ and $u\in \C^\times$. 
We call $c$ the color of the Fock module and $u$ the evaluation parameter. 
Quite generally, if the central element $C$ acts on an $\E$ module $M$ by a scalar $k$, then we say that $M$ has level $k$
and often write $C=k$. 
The Fock module $\F_c(u)$ has level $s_c$. 

The Fock modules are irreducible with respect to the Heisenberg subalgebra of $\E$
generated by $\psi^\pm(z)$. 
Thus we have the identification of vector spaces $\F_c(u)=\C[h_{-r}]_{r>0}\,{v}_c$, 
where $v_c$ is the vacuum vector such that $h_rv_c=0$ $(r>0)$ and $Cv_c=s_cv_c$. 
Then the generators $e(z)$ and $\tilde f(z)$ are given by vertex operators
\begin{align*}
&e(z)\mapsto b_c :V_c(z;u):\,,
\quad 
\tilde{f}(z)\mapsto b_c:V_c(z;u)^{-1}:\,,
\end{align*}
where $b_c=-t_c/\kappa_1$ and 
\begin{align}\label{fock va}
&V_c(z;u)=u\exp\Bigl(\sum_{r>0}\frac{\kappa_r h_{-r}}{1-q_c^r}z^{r}\Bigr)
\exp\Bigl(\sum_{r> 0}\frac{\kappa_r h_{r}}{1-q_c^r}q_c^{r/2}z^{-r}\Bigr)\,.
\end{align}
Note that 
\begin{align}
V_c(s_c^{-1}z;u)\psi^+(z)=\psi^-(s_c^{-1}z)V_c(s_cz;u)\,.\label{id-VO}
\end{align}

\medskip 

For vertex operators  \eqref{fock va}, the contractions are rational. 
The non-trivial ones are 
\begin{align}
&\mathcal{C}\bigl(V_c(z;u),V_c(w;v)\bigr)=-\frac{\kappa_1}{1-q_c},\notag
\\
&\mathcal{C}\bigl(\psi^+(q_c^{-1/2}z),V_c(w;u)\bigr)=\mathcal{C}\bigl(V_c(z;u),\psi^-(w)\bigr)=-\kappa_1\,,
\label{fock contr}\\
&\mathcal{C}\bigl(\psi^+(q_c^{-1/2}z),\psi^-(w)\bigr)=-(1-q_c)\kappa_1\,.\notag
\end{align}
\medskip

\section{$\cW$ algebras and Integrals of Motion of type $\ssA$}\label{section A}
The deformed $\cW$ algebras are introduced in \cite{FR1} 
starting from a deformed non-affine Cartan matrix, or in a more general context, a Dynkin quiver in \cite{KP}.
In these papers, supersymmetric cases were not considered.

It turns out that the deformed $\cW$ currents of  type $\ssA$ are essentially the images of the current $e(z)$ of the quantum toroidal $\gl_1$ algebra $\E$ 
acting on tensor products of Fock modules. 
In this section we discuss this connection, and explain how the deformed affine Cartan matrix 
and integrals of motion are recovered from the data of Fock modules.

\subsection{The current $e(z)$ and root currents $A_i(z)$.}\label{sec roots A}

Fix a tensor product of $\ell+1$ Fock modules 
\begin{align}
\F_{c_1}(u_1)\otimes\cdots\otimes \F_{c_{\ell+1}}(u_{\ell+1})
\,,\quad c_1,\ldots,c_{\ell+1}\in\{1,2,3\}\,,
\label{tenFock}
\end{align}
which has level $C=\prod_{j=1}^{\ell+1} s_{c_j}$.
By coproduct formulas \eqref{e coproduct}, \eqref{psi+ coproduct}, the current $e(z)$ acts 
as a sum of vertex operators in $\ell+1$ bosons
\begin{align*}
&e(z)=b_{c_1}\Lambda_1(z)+\cdots+b_{c_{\ell+1}} \Lambda_{\ell+1}(z)\,,\\
&\Lambda_i(z)=1\otimes\cdots\otimes 1
\otimes \overset{\underset{\smile}{i}}{V_{c_i}(a_{i}^\ssA z;u_i)}
\otimes \psi^+(s^{-1}_{c_{i+1}}a_{i+1}^\ssA z)\otimes \cdots \otimes \psi^+(s^{-1}_{c_{\ell+1}}a_{\ell+1}^\ssA z)\,,
\end{align*}
where
\begin{align}
a_i^\ssA=\prod_{j=i+1}^{\ell+1} s_{c_j}^{-1}\,.
\label{sta}
\end{align}

Note  that the current $e(z)$ with evaluation parameters $au_i$, 
where $a\in\C^\times$, is obtained from $e(z)$ with evaluation parameters $u_i$ by scalar multiplication by $a$.

\bigskip

To each neighboring pair of Fock spaces
\begin{align*}
\cdots\otimes
\overset{\underset{\smile}{i}}{\F_{c_i}(u_i)}\otimes\overset{\underset{\smile}{i+1}}{\F_{c_{i+1}}(u_{i+1})}
\otimes\cdots,
\quad  i=1,\ldots,\ell\,,
\end{align*}
we associate a current $A_i(z)$. Namely, for $i=1,\dots,\ell$, let $A_i(z)$ be 
given by a normally ordered ratio:
\begin{align} \label{A}
&\quad A_i(z)=\,\,:\frac{\Lambda_i((a_i^\ssA)^{-1} z)}{\Lambda_{i+1}((a_i^\ssA)^{-1} z)}:\,,
\end{align}
where $a_i^\ssA$ is given in \eqref{sta}. We call $A_i(z)$ a root current.

From \eqref{fock contr} we have the following contractions:
\begin{align*}
    \mathcal{C}\bigl(\La_i(z),\La_j(w)\bigr) = 
    \begin{cases}
\displaystyle{-\kappa_1}  & (i<j), \\ -\kappa_1/(1-q_{c_i}) & (i=j), \\0 & (i>j), 
\end{cases}
\end{align*}
where $i,j=1,\dots,\ell+1$. 

Denote the contractions between root currents by 
\begin{align}\label{Bij}
B_{i,j}=\mathcal{C}\bigl(A_i(z),A_j(w)\bigr)\, \qquad (i,j=1,\dots,\ell).
\end{align} 
The only non-trivial ones are 
\begin{align}\label{A-contr}
&
B_{i-1,i}=B_{i,i-1}=\frac{t_1t_2t_3}{t_{c_i}}, \quad
B_{j,j}=-\frac{t_1t_2t_3}{t_{c_j}t_{c_{j+1}}}(s_{c_j}s_{c_{j+1}}-s^{-1}_{c_j}s^{-1}_{c_{j+1}})\,,
\end{align}    
where $i=2,\dots, \ell$, $j=1,\dots,\ell$.

In other words, only neighboring $A_i(z)$ 
have non-trivial contractions which can be described by 
$2\times 2$ matrices corresponding to the choice of three Fock spaces, see 
type $\ssA$ matrices in Appendix \ref{sec finite}.

Note that all these contractions are rational and do not depend on evaluation parameters $u_i$. 

The matrix of contractions $B=\bigl(B_{i,j}\bigr)_{i,j=1,\ldots,\ell}$ should be compared with
the Cartan matrix of type $\ssA$. 
We say that $A_i(z)$ is a bosonic current of type $c$ if it corresponds to a pair $\mc F_c\otimes \mc F_c$, 
and a fermionic current of type $c$ if it 
corresponds to a pair $\mc F_{c_1}\otimes \mc F_{c_2}$, where $\{c_1,c_2,c\}=\{1,2,3\}$.

\medskip

\begin{example}\label{A example}  Consider the tensor product
\begin{align}\label{example fock}
\F_1(u_1)\otimes\F_1(u_2)\otimes\F_1(u_3)\otimes\F_2(u_4)\otimes\F_2(u_5)\otimes \F_3(u_6)\,, \qquad C=s_1^3s_2^2s_3.
\end{align}
The 
matrix $B$ is given by
\begin{align}\label{Cartan example}
&B=
\begin{pmatrix}
 -t_2t_3(s_1+s_1^{-1}) & t_2t_3  &     0     &        0      &    0  \\
             t_2t_3 &-t_2t_3(s_1+s_1^{-1})& t_2t_3   & 0  &    0           \\
                 0     &t_2t_3                &t_3^2  & t_1t_3       &      0         \\
                   0   &            0          &t_3t_t  &-t_1t_3 (s_2+s_2^{-1})& t_1t_3    \\
                   0   &      0&0& t_1t_3 &t_1^2
\end{pmatrix}.
\end{align}
We picture this matrix as a Dynkin diagram where 
a circle represents a bosonic node and a crossed circle
a fermionic node. We also attach a marking by $1,2,3$ to each vertex, indicating its type.\medskip

\begin{align*}
\begin{tikzpicture}
\dynkin[root radius=.1cm, edge length=1.3cm, labels={1,1,3,2,1}]{A}{ootot}
\end{tikzpicture}
\end{align*}
\qed
\end{example}

The diagram in the example looks like a diagram of ${\sln}_{4|2}$ 
but it is different because of the markings. 
In the diagram of $\sln_{4|2}$ all bosonic nodes have the same marking 
and all fermionic nodes have the same marking too:
\medskip

\begin{align*}
\begin{tikzpicture}
\dynkin[root radius=.1cm, edge length=1.3cm, labels={1,1,3,1,3}]{A}{ootot}
\end{tikzpicture}
\end{align*}
And if all colors are the same, e.g. $c_i=2$, $i=1,\dots,\ell+1$, then
\begin{align}\label{A Cartan}
B_{i,j}
= -t_1t_3\bigl((s_2+s_2^{-1})\delta_{ij}-
    \delta_{i,j+1}-\delta_{i,j-1}\bigr)\,.
\end{align}
Up to an overall multiple $-t_1t_3$, 
this coincides with the matrix $B(q,t)$ of type $A_{\ell}$ 
in \cite{FR1}, eq.(2.4) with the identification $s_2=qt^{-1}$.
\medskip

\begin{align*}
\begin{tikzpicture}
\dynkin[root radius=.1cm, edge length=1.3cm, labels={2,2,2,2}]{A}{ooo..o}
\end{tikzpicture}
\end{align*}
However, in contrast to \cite{FR1}, we have one extra boson. Indeed, by construction our root currents
commute with the diagonal Heisenberg algebra 
\begin{align*}
[A_i(z),\Delta^{(\ell)}h_r]=0\qquad (i=1,\ldots,\ell,\ r\neq 0),
\end{align*}
where $\Delta^{(\ell)}$ signifies the iterated coproduct with $\Delta^{(1)}=\Delta$. 
This extra boson allows us to have rational commutation relations between $\La_i(z)$'s,
and will play a role in the affinization to be discussed below. 
\medskip
 
\subsection{Root current $A_0(z)$}\label{affine root A sec}
Fix an arbitrary $\mu\in\C^\times$ with $|\mu|<1$. 
We define the dressed version of the current $e(z)$ of the algebra $\E$ by
\begin{align}\label{dress}
\mathbf{e}(z)=e(z){\psi_\mu^+(C^{-1} z)}^{-1}\,,\qquad
\psi_\mu^+(z)=\prod_{s=0}^\infty \psi^+(\mu^{-s}z)
=\exp\bigl(\sum_{r>0}\frac{\kappa_r }{1-\mu^r}h_{r}z^{-r}\bigr)\,.
\end{align}
The coefficients of the dressed current $\mathbf{e}(z)$
are elements of $\widetilde \E$. 
We warn the reader that  $\mathbf{e}(z)$ at $\mu=0$ does not reduce to $e(z)$ but rather
to $e(z){\psi^+(C^{-1}z)}^{-1}$. This somewhat unusual convention is due to historical reasons.

While the current $e(z)$ has rational commutation relations, the dressed current $\bs e(z)$ satisfies the elliptic commutation relations:
\begin{align*}
\mathbf{e}(z)\mathbf{e}(w)=\mathbf{e}(w)\mathbf{e}(z)\prod_{i=1}^3\frac{\Theta_\mu(q_iw/z)}{\Theta_\mu(q_i^{-1}w/z)}\,.
\end{align*}
Here we use symbols for infinite products and theta functions
\begin{align*}
&(z_1,\ldots,z_r;p)_\infty=\prod_{i=1}^r\prod_{k=0}^\infty(1-z_i p^k)\,,
\quad 
\Theta_p(z)=(z,pz^{-1},p;p)_\infty\,.
\end{align*}

These relations have to be understood as equality of matrix elements. 
Namely, if  $M$ is an admissible $\E$ module then the matrix coefficients of both sides
converge to meromorphic functions which are analytic continuation of each other.

There are two separate motivations for the definition of the dressed current. One is the form of the integrals of motion, see Section \ref{int A sec}. In this section we discuss a different reason: the appearance of the current $A_0(z)$ corresponding to the affine node and ultimately of the screening operator corresponding to the affine node, see Section \ref{sec A screenings}. In type $\ssA$ these two points lead to the same definition of the dressed current.

The dressed  current $\bs e(z)$ has the form
$$
\bs e(z)=b_{c_1}\bs\La_1(z)+\dots + b_{c_{\ell+1}}\bs\La_{\ell+1}(z),
$$
where $\bs\La_i(z)=\La_i(z)\Delta^{(\ell)}{\psi_\mu^+(C^{-1} z)}^{-1}$.

For $i=1,\dots,\ell$, the ratios of the terms in dressed and undressed currents are the same:
\begin{align*}
&
:\frac{\bs\Lambda_i(z)}{\bs\Lambda_{i+1}(z)}:
\,\,=\,\,:\frac{\Lambda_i(z)}{\Lambda_{i+1}(z)}:\,\,
=A_i(a_i^Az)\,,\quad i=1,\ldots,\ell\,.
\end{align*}
However the contractions of individual terms change to  elliptic ones:
\begin{align}
\mathcal{C}\bigl(\bs \La_i(z), \bs \La_j(w)\bigr)= \mathcal{C}\bigl(\La_i(z),\La_j(w)\bigr)+\frac{\kappa_1}{1-\mu}.
\label{cont bs La}
\end{align}

This allows us to define a new current $A_0(z)$ which has rational contractions 
with all root currents $A_i(z)$, $i=1,\dots,\ell$, and is independent of them. Namely, define the zeroth root current by
\begin{align}
A_0(z)=\,\,
:\frac{\bs \Lambda_{\ell+1}(z)}{\bs\Lambda_{1}(\mu z)}:\,\,
=\,\,
:\frac{\Lambda_{\ell+1}(z)}{\Lambda_{1}(\mu z)}\Delta^{(\ell)}{\psi^+(C^{-1}\mu z)}:\,.
\label{A0}
\end{align}

The contractions involving $A_0(z)$ are as follows. Retaining  notation \eqref{Bij}
we have $B_{i,0}=B_{0,i}=0$ if $i\neq 0,1,\ell$. 
The non-trivial ones are:
\begin{align*}
&B_{0,0}=-\frac{t_1t_2t_3}{t_{c_{\ell+1}}t_{c_1}}
(s_{c_{\ell+1}}s_{c_1}-s^{-1}_{c_{\ell+1}}s^{-1}_{c_1})\,,
\\
&B_{0,\ell}=B_{\ell,0}=\frac{t_1t_2t_3}{t_{c_{\ell+1}}}\,,
\qquad
B_{0,1} =\frac{t_1t_2t_3}{t_{c_1}}C\mu^{-1}, \quad 
B_{1,0} =\frac{t_1t_2t_3}{t_{c_1}}C^{-1}\mu\,,
\end{align*}
where $C=\prod_{j=1}^{\ell+1}s_{c_j}$ is the total level and $\ell>1$. 

For $\ell=1$, $B_{0,1}$ and $B_{1,0}$ take the form:
\begin{align*}
    B_{0,1}=-\kappa_1\Big(\frac{1}{t_{c_2}}+\frac{C\mu^{-1}}{t_{c_1}}\Big), 
    \qquad B_{1,0}= -\kappa_1\Big(\frac{1}{t_{c_2}}+\frac{C^{-1}\mu}{t_{c_1}}\Big).
\end{align*}
Thus we have a  $U_q\widehat{\sln}_2$ type deformed Cartan matrix corresponding to $c_1=c_2=2$ and a $U_q\widehat{\gl}_{1|1}$ type deformed Cartan matrix corresponding to $c_1=1$, $c_2=2$:
\begin{align*}
-t_1t_3\begin{pmatrix}
s_2+s_2^{-1} & -1-C\mu^{-1}\\
-1-C^{-1}\mu & s_2+s_2^{-1}\\
\end{pmatrix},    
\qquad
t_3\begin{pmatrix}
t_3 &  t_1+t_2C\mu^{-1} \\
t_1+t_2C^{-1}\mu & t_3 \\
\end{pmatrix}.
\end{align*}

We call  the extended matrix of contractions $\hat B=\bigl(B_{i,j}\bigr)_{i,j=0,1,\ldots,\ell}$ 
the symmetrized deformed affine Cartan matrix. 
\bigskip

\begin{example} We continue to consider the tensor product \eqref{example fock}.
The matrix $\hat B$ is given by
\begin{align}\label{Cartan example2}
&\hat B=
\begin{pmatrix}
t_2^2       &t_2t_3 C\mu^{-1}         &     0     &     0     &       0       & t_1t_2      \\
t_2t_3C^{-1}\mu & -t_2t_3(s_1+s_1^{-1}) & t_2t_3  &    0      &         0     & 0     \\
           0  &t_2t_3               &-t_2t_3(s_1+s_1^{-1})& t_2t_3   &  0 &    0           \\
           0  &      0   &t_2t_3                &t_3^2  & t_1t_3       &      0         \\
            0 &     0    &     0                 &t_1t_3  &-t_1t_3(s_2+s_2^{-1})&t_1t_3     \\
t_1t_2      &    0    &    0        &     0     &t_1t_3                    &t_1^2    \\
\end{pmatrix}.
\end{align}

With the root $A_0(z)$ the Dynkin diagram becomes the following.

\begin{align}\label{dynkin A ex} 
\raisebox{-20pt}{\begin{tikzpicture}[baseline=(origin.base)] 
\dynkin[root radius=.1cm, edge length=1.3cm, labels={{2},1,1,3,2,1},extended, affine mark=t]{A}{ootot}
\end{tikzpicture}}
\end{align}
The affine node is fermionic and we label it by 2, since it corresponds to the ratio of the last and first terms obtained from $\F_3(u_6)$ and $\F_1(u_1)$.
However, note that there is an arbitrary parameter $\mu$ which does not appear in the diagram. \qed
\end{example}

\medskip
One can easily describe the restrictions on the markings of the Dynkin diagram which can appear. 
There is a single  global condition: if there are $a_c$ fermionic nodes of type $c$, 
$c=1,2,3$, then $a_1\equiv a_2\equiv a_3$ modulo 2. 
In addition, there are several local conditions. For example, neighboring bosonic nodes have to have the same marking, neighboring bosonic and fermionic nodes cannot have the same marking.

In particular, \eqref{dynkin A ex} is not a diagram of affine $\sln_{4|2}$, as in any affine Dynkin diagram of type $\sln_{m|n}$ the number of simple odd roots is even (which is true in our setting if all bosonic nodes have the same type).  

\medskip

If all colors are the same, e.g. $c_i=2$, $i=1,\dots,\ell+1$, then 
the matrix $\hat B$ is, up to a scalar multiple,  
the deformed affine Cartan matrix of type $\ssA$ in \cite{KP}.

\bigskip

The following lemma shows that in all cases
the currents $A_i(z)$, $i=0,1,\dots,\ell$, are independent generating currents of the Heisenberg algebra
provided  $\mu\neq1,C^2$.

\begin{lem}\label{det lemma}
We have
\begin{align*}
\det \hat B=\kappa_1^{\ell+1}\prod_{j=1}^{\ell+1}t_{c_j}^{-1}\times (1-\mu)(C^{-1}-C\mu^{-1})\,.
\end{align*}
\end{lem}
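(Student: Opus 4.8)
We need to compute $\det \hat B$ where $\hat B = (B_{i,j})_{i,j=0,1,\ldots,\ell}$.

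Let me understand the structure:
- $\hat B$ is an $(\ell+1) \times (\ell+1)$ matrix
- The non-affine part $B = (B_{i,j})_{i,j=1,\ldots,\ell}$ is tridiagonal
- The affine node (index 0) connects to both node 1 and node $\ell$

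Let me recall the contractions:
- $B_{i-1,i} = B_{i,i-1} = \frac{t_1t_2t_3}{t_{c_i}}$ for $i=2,\ldots,\ell$
- $B_{j,j} = -\frac{t_1t_2t_3}{t_{c_j}t_{c_{j+1}}}(s_{c_j}s_{c_{j+1}} - s_{c_j}^{-1}s_{c_{j+1}}^{-1})$ for $j=1,\ldots,\ell$

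For the affine node:
- $B_{0,0} = -\frac{t_1t_2t_3}{t_{c_{\ell+1}}t_{c_1}}(s_{c_{\ell+1}}s_{c_1} - s_{c_{\ell+1}}^{-1}s_{c_1}^{-1})$
- $B_{0,\ell} = B_{\ell,0} = \frac{t_1t_2t_3}{t_{c_{\ell+1}}}$
- $B_{0,1} = \frac{t_1t_2t_3}{t_{c_1}}C\mu^{-1}$
- $B_{1,0} = \frac{t_1t_2t_3}{t_{c_1}}C^{-1}\mu$

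The plan is to recognize $\hat B$ as a periodic (cyclic) tridiagonal matrix and to reduce the whole computation to a single closed-form evaluation for open tridiagonal chains. First I would pull out the common scalar $\tau:=t_1t_2t_3=-\kappa_1$ that divides every entry, writing $\det\hat B=\tau^{\ell+1}\det\hat M$ with $\hat M=\tau^{-1}\hat B$. Reading the nodes cyclically (node $i$, $1\le i\le\ell$, carries the color pair $(c_i,c_{i+1})$ and node $0$ carries $(c_{\ell+1},c_1)$, so consecutive nodes share exactly one color), $\hat M$ becomes a cyclic tridiagonal matrix: the diagonal entry of a node with colors $a,b$ is $-(s_as_b-s_a^{-1}s_b^{-1})/(t_at_b)$, the symmetric off-diagonal product across an edge of shared color $c$ is $t_c^{-2}$, the corner entries joining node $0$ to node $\ell$ are $e=f=t_{c_{\ell+1}}^{-1}$, and the only asymmetry sits on the edge between node $0$ and node $1$, where the super- and sub-diagonal entries carry the extra factors $C\mu^{-1}$ and $C^{-1}\mu$.

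The key lemma I would establish is a closed formula for the open chain: for a color sequence $d_0,\dots,d_k$, the $k\times k$ tridiagonal matrix with the diagonal above and with off-diagonal products $t_{d_i}^{-2}$ (shared color) has determinant
\begin{align*}
(-1)^k\,\frac{\prod_{j=0}^k s_{d_j}-\prod_{j=0}^k s_{d_j}^{-1}}{\prod_{j=0}^k t_{d_j}}\,.
\end{align*}
This is proved by induction on $k$ through the tridiagonal recursion $D_k=a_kD_{k-1}-(u_{k-1}v_{k-1})D_{k-2}$, the inductive step being the telescoping identity $(s_{d_{k-1}}s_{d_k}-s_{d_{k-1}}^{-1}s_{d_k}^{-1})(X-X^{-1})-t_{d_k}(Xs_{d_{k-1}}^{-1}-X^{-1}s_{d_{k-1}})=t_{d_{k-1}}(Xs_{d_k}-X^{-1}s_{d_k}^{-1})$ with $X=\prod_{j=0}^{k-1}s_{d_j}$, which follows by a direct expansion. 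Applied to the non-affine block $B/\tau$ this already yields $\det B=\kappa_1^{\ell}(C-C^{-1})\prod_{j=1}^{\ell+1}t_{c_j}^{-1}$, since the telescoping product of the $s_{c_j}$ equals the total level $C$.

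Finally I would invoke the periodic-Jacobi determinant formula, which for $\ell\geq 2$ reads $\det\hat M=\Theta-ef\,\Phi+(-1)^\ell\big(e\prod_i v_i+f\prod_i u_i\big)$, where $\Theta$ is the determinant of the full tridiagonal part (corners deleted) and $\Phi$ that of the inner block on nodes $1,\dots,\ell-1$. The decisive observation is that the asymmetric factors $C\mu^{-1},C^{-1}\mu$ enter $\Theta$ and $\Phi$ only through the \emph{symmetric} product $u_0v_0=t_{c_1}^{-2}$, so both are computed by the open-chain lemma (with telescoping $s$-products $s_{c_{\ell+1}}C$ and $C/s_{c_{\ell+1}}$ respectively), while the genuine $C,\mu$ dependence survives only in $\prod_i u_i=C\mu^{-1}t_{c_{\ell+1}}T^{-1}$ and $\prod_i v_i=C^{-1}\mu\,t_{c_{\ell+1}}T^{-1}$, with $T=\prod_{j=1}^{\ell+1}t_{c_j}$. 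Substituting, the four ``level'' terms from $\Theta-ef\,\Phi$ collapse to $-t_{c_{\ell+1}}(C+C^{-1})$, and adding the corner-product term gives the bracket $t_{c_{\ell+1}}\big(C\mu^{-1}+C^{-1}\mu-C-C^{-1}\big)=-t_{c_{\ell+1}}(1-\mu)(C^{-1}-C\mu^{-1})$; multiplying by $\tau^{\ell+1}$ and using $\tau=-\kappa_1$ produces the claim. I expect the substantive content to be the telescoping induction of the second paragraph, while the main risk of error is fixing the signs in the periodic-Jacobi formula (which I would pin down on the $3\times3$ case) and checking that the asymmetry localizes as stated; the degenerate case $\ell=1$, where the corner coincides with the off-diagonal, I would verify directly from the explicit $2\times2$ matrices, where $\det\hat B=\kappa_1^2(t_{c_1}t_{c_2})^{-1}(1-\mu)(C^{-1}-C\mu^{-1})$ by inspection.
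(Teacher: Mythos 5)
Your proof is correct, but it takes a genuinely different route from the paper's. You evaluate $\det\hat B$ head-on: after extracting the scalar $t_1t_2t_3=-\kappa_1$ you prove, by a telescoping induction, a closed formula for the determinant of any open tridiagonal chain of this shape, and then assemble the cyclic determinant via the periodic-Jacobi expansion, observing that the genuine $(C,\mu)$-dependence survives only in the two corner-product terms. The paper instead avoids computing any determinant: it notes that $\det\hat B$, as a function of $x=C^{-1}\mu$, is linear in $x+x^{-1}$ (for essentially the structural reason you isolate --- the asymmetric entries enter each permutation term either through their $\mu$-independent product or through one of the two full cyclic products), exhibits an explicit kernel vector $(C,s_{c_1},s_{c_1}s_{c_2},\dots)$ at $x=C^{-1}$, i.e.\ $\mu=1$, and then fixes the remaining constant from the leading behaviour as $x\to\infty$, which is precisely your corner-product term. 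The paper's route is shorter and needs only a row-by-row verification of the kernel vector; yours is more computational but yields strictly more information --- in particular the closed form $\det B=\kappa_1^{\ell}(C-C^{-1})\prod_{j=1}^{\ell+1}t_{c_j}^{-1}$ for the finite-type block and all the principal minors, which the paper never states. Your telescoping identity and the sign bookkeeping in the periodic-Jacobi formula both check out, and you are right that $\ell=1$ must be treated separately; there the explicit $2\times2$ matrices displayed in the paper confirm the formula directly.
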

\begin{proof}
It is easy to see that $\det \hat B$ as a function of $x=C^{-1}\mu$ 
is a linear function of $x+x^{-1}$. 
One can check that if $x=C^{-1}$ then the vector $(y_0,y_1,\ldots,y_\ell)^T$ with $y_0=C$ and $y_i=s_{c_1}\cdots s_{c_i}$ 
($1\le i\le \ell$) is in the kernel of $\hat B$. 
It follows that $\det\hat  B=a(C+C^{-1}-x-x^{-1})$ with some $a$. 
The coefficient $a$ can be determined from the behavior as $x\to\infty$
\begin{align*}
\det\hat B=(-1)^{\ell} \hat B_{1,0}\hat B_{0,\ell}\prod_{j=2}^{\ell}\hat B_{j,j-1}+O(1)=
-x\prod_{j=1}^{\ell+1}\frac{\kappa_1}{t_{c_j}}+O(1)\,.
\end{align*}
\end{proof}
\medskip

\subsection{Currents $Y_i(z)$ and $qq$-characters.}\label{q char sec}
In this section we describe current $\bs e(z)$ as a $qq$-character in the spirit of \cite{N}, \cite{KP}.

In what follows we write $\hat B=\hat D\hat C$, $\hat C=(C_{i,j})_{i=0}^\ell$,
choosing a diagonal matrix $\hat D={\rm {diag}}(d_0,\dots,d_{\ell})$ in such a way that 
\begin{align*}
&d_i=-\frac{t_1t_2t_3}{t_c},\quad&&  C_{i,i}=s_c + s_c^{-1},&& \text{if  $A_i(z)$ is bosonic of type $c$},\\
&d_i=t_c,\quad && C_{i,i}=s_c - s_c^{-1},&&\text{if  $A_i(z)$ is fermionic of type $c$}.
\end{align*}
The currents $A_i(z)$ correspond to roots.
In order to understand the combinatorics of deformed $\cW$-currents, it is convenient to introduce 
currents $Y_i(z)$, $i=0,\dots,\ell$, which correspond to the fundamental weights.

Define the modes $\ssa_{i,r}$ by setting 
\begin{align*}
A_i(z)=e^{\ssa_{i,0}}:e^{\sum_{r\neq0}\ssa_{i,r}z^{-r}}:\,.
\end{align*}
The zero mode $e^{\ssa_{i,0}}$ is a variable, which takes the value  $u_i/u_{i+1}$
in \eqref{A} and \eqref{A0}, where $u_0=u_{\ell+1}$.

Write elements of non-symmmetrized deformed affine Cartan matrix as 
$C_{i,j}=\sum_{k} m_{i,j}^{(k)}-\sum_s n_{i,j}^{(s)}$, 
where $m_{i,j}^{(k)}$, $n_{i,j}^{(s)}$ are  monomials of the form $s_1^as_2^b\mu^c$, $a,b,c\in\Z$,
and define $Y_i(z)$ by the set of equations
\begin{align}
A_j(z)=\,\,:\prod_i \prod_k Y_i(m_{i,j}^{(k)}z)\prod_s \Big(Y_i(n_{i,j}^{(s)}z)\Big)^{-1}:,  \qquad j=0,\dots,\ell,
\label{AinY}
\end{align}
where $Y_i(z)$ are of the form
\begin{align*}
Y_i(z)=\,\,:e^{\sum_{r\neq0}\ssy_{i,r}z^{-r}}:\times 
\begin{cases}
e^{\ssy_{i,0}}&\text{if $A_i(z)$ is bosonic},\\
e^{Q_{\ssy_i}}z^{\ssy_{i,0}}
&\text{if $A_i(z)$ is fermionic}.\\
\end{cases}
\end{align*}
In the right hand side of \eqref{AinY}, $Y_i(z)$ for fermionic $A_i(z)$ appears only as a ratio $:Y_i(az)Y_i(bz)^{-1}:$, 
so that  $e^{Q_{\ssy_i}}$ cancels out. We shall specify the latter when we discuss the screening currents, 
see Section \ref{sec A screenings} below. 

Due to Lemma \ref{det lemma}, such $Y_i(z)$'s exist and are unique up to an overall shift of zero modes $y_{i,0}$. 
Moreover, we have 
\begin{align}
\mc C(A_j(w),Y_i(z))=\pm \mc C(Y_i(z),A_j(w))
=d_i\delta_{i,j}\,,
\label{defY}
\end{align}
where the sign is $+$ for bosonic nodes and $-$ for fermionic nodes.

We explain this definition in an Example \ref{A example 2} below.

\medskip

Let us set up some language convenient for discussing combinatorics.
In what follows we set $\mu=C s_1^{-\gamma}$, $\gamma\in\C$, and rewrite monomials 
$s_1^i s_2^j \mu^k$, $i,j,k\in\Z$, in the form $s_1^{a}s_2^b$, $a,b\in\C$.
We adopt the shorthand notation ${\bs l}_{a,b}=Y_l(s_1^as_2^bz)$, $l=0,1,\dots,\ell$.

Let $\mc A$ be the commutative ring in formal free variables $\bs l_{a,b}^{\pm1}$, $l=0,1,\dots,\ell$, $a,b\in \C$. 

Let $\pi_0: \mc A\, \to \mc A$ be the ring homomorphism sending 
$\bs 0_{a,b} \mapsto 1$,  $\bs l_{a,b} \mapsto \bs l_{a,b}$, $l\neq 0$.

For a formal monomial $\alpha=s_1^as_2^b$ define a ring homomorphism 
$\tau_\alpha:\,  \mc A \to \mc A$ shifting indices by $(a,b)$. 
For example, we have $\tau_{s_1^as_2^b}\bs 1_{c,d}=\bs 1_{a+c,b+d}$.

The combinatorial study of the $qq$-characters and $q$-characters is 
based on the concept of dominant monomial, see \cite{FR2}. The characters are obtained by 
a combinatorial algorithm ``expanding the dominant monomials", see \cite{FM}, \cite{KP}.  

If current $A_l(z)$ is bosonic, then a monomial $m\in \mc A$ is called $\bs l$-dominant 
if $m$ does not contain negative powers of $\bs l_{a,b}$. If current $A_l(z)$ is fermionic, 
then we suggest to call a monomial $m\in \mc A$ $\bs l$-dominant if $m$ is a product of 
$\tilde {\bs l}_{a,b}$ with $\tilde l\neq l$ and monomials of the form $\bs l_{a,b}\tau_{q_c}(\bs l_{a,b}^{-1})$, 
where in the Cartan matrix we have either $\hat C_{l,l-1}=t_c$ or $\hat C_{l,l+1}=t_c$.

\medskip

\begin{example}\label{A example 2}. We continue to work with \eqref{Cartan example}. In this case we have
$$
d_0=t_2\,, \qquad d_1=d_2=-t_2t_3\,, \qquad d_3=t_3\,, \qquad d_4=-t_1t_3\,,\qquad d_5=t_1\,,
$$
and
\begin{align*}
&\hat C=
\begin{pmatrix}
t_2       &t_3 C\mu^{-1}         &     0     &     0     &       0       & t_1      \\
-C^{-1}\mu & s_1+s_1^{-1} & -1  &    0      &         0     & 0     \\
           0  &-1               &s_1+s_1^{-1}& -1 &  0 &    0           \\
           0  &      0   &t_2                &t_3  & t_1       &      0         \\
            0 &     0    &     0                 &-1 &s_2+s_2^{-1}& -1     \\
t_2      &    0    &    0        &     0     & t_3                    &t_1    \\
\end{pmatrix}\,.
\end{align*}

Reading off the columns of $\hat C$, we obtain $A_i(z)$ currents in terms of the $Y_j(z)$'s: 
\begin{align*}
&A_0(z)=\bs 0_{0,1}\bs 0_{0,-1}^{-1}\bs 1_{-\gamma,0}^{-1}\bs 5_{0,1}\bs 5_{0,-1}^{-1},
&A_1(z)=\bs 0_{\gamma-1,-1}\bs 0_{\gamma+1,+1}^{-1}\bs 1_{1,0}\bs 1_{-1,0}\bs 2_{0,0}^{-1},\ \  
&A_2(z)=\bs 1_{0,0}^{-1}\bs 2_{1,0}\bs 2_{-1,0}\bs 3_{0,1}\bs 3_{0,-1}^{-1}, \\
&A_3(z)=\bs 2_{0,0}^{-1}\bs 3_{-1,-1}\bs 3_{1,1}^{-1}\bs 4_{0,0}^{-1},
&A_4(z)=\bs 3_{1,0}\bs 3_{-1,0}^{-1}\bs4_{0,1}\bs 4_{0,-1}\bs 5_{-1,-1}\bs 5_{1,1}^{-1},\ \ 
&A_5(z)=\bs 0_{1,0}\bs 0_{-1,0}^{-1}\bs 4_{0,0}^{-1}\bs 5_{1,0}\bs 5_{-1,0}^{-1},
\end{align*}
where $s_1^\gamma=C \mu^{-1}$. 

Set $\delta_i=\ln q_i$, $\delta_1+\delta_2+\delta_3=0$. Then zero modes are given by
\begin{align*}
&\ssa_{0,0}=\delta_2 \ssy_{0,0}-\ssy_{1,0}+\delta_2\ssy_{5,0},\ 
\ssa_{1,0}=\delta_3\ssy_{0,0}+2\ssy_{1,0}-\ssy_{2,0},\ 
\ssa_{2,0}=-\ssy_{1,0}+2\ssy_{2,0}+\delta_2\ssy_{3,0},\ \\
&\ssa_{3,0}=-\ssy_{2,0}+\delta_3\ssy_{3,0}-\ssy_{4,0},\ 
\ssa_{4,0}=\delta_1\ssy_{3,0}+2\ssy_{4,0}+\delta_3\ssy_{5,0},\ 
\ssa_{5,0}=\delta_1\ssy_{0,0}-\ssy_{4,0}+\delta_1\ssy_{5,0}\,. 
\end{align*}
These conditions uniquely determine 
$\ssy_{i,0}$'s up to a common additive shift.

The current $\bs e(z)$ can be written:
\begin{align*}
&\bs e(a_0^{-1}z)= \bs \La_1(a_0^{-1}z)
\Bigl(b_{c_1}1+b_{c_2}A^{-1}_1(s_1z)+b_{c_3}A^{-1}_1(s_1z)A^{-1}_2(s_1^2z)+
b_{c_4}A^{-1}_1(s_1z)A^{-1}_2(s_1^2z)A^{-1}_3(s_1^3z)\notag\\ 
&+b_{c_5}A^{-1}_1(s_1z)A^{-1}_2(s_1^2z)A^{-1}_3(s_1^3z)A^{-1}_4(s_1^3s_2z)
+b_{c_6}A^{-1}_1(s_1z)A^{-1}_2(s_1^2z)A^{-1}_3(s_1^3z)A^{-1}_4(s_1^3s_2z)A^{-1}_5(s_1^3s_2^2z)\Bigr),
\notag
\end{align*}
which we symbolically depict as follows.
$$
\begin{tikzcd}
    \bs\La_1\arrow{r}{A_1^{-1}} & \bs\La_2 \arrow{r}{A_2^{-1}} & \bs\La_3 \arrow{r}{A_3^{-1}} & \dots\arrow{r}{A_{\ell-1}^{-1}} &\bs\La_{\ell}\arrow{r}{A_{\ell}^{-1}} &  \bs\La_{\ell+1}
\end{tikzcd}
$$

Ignoring constants $b_{c_i}$ we write it using $Y_i(z)$ 
in the form:
\begin{align}\label{qq-A}
\chi =\bs 0_{\gamma,-1} \bs 0_{\gamma+2,1}^{-1}\bs 1_{0,0} 
+\bs 1_{2,0}^{-1}\bs 2_{1,0}+\bs 2_{3,0}^{-1} \bs 3_{2,-1}\bs 3_{2,1}^{-1}
+ \bs 3_{4,1} \bs 3_{2,1}^{-1}\bs 4_{3,0}+\bs 4_{3,2}^{-1} \bs 5_{4,2} \bs 5_{2,0}^{-1}
+ \bs 0_{2,2}\bs 0_{4,2}^{-1}\bs 5_{2,2}\bs 5_{2,0}^{-1}.
\end{align}
Following \cite{N}, we call the expression $\chi$ the 
$qq$-character of the vector representation corresponding to the Dynkin diagram \eqref{dynkin A ex}.  

According to the general rule, 
for $l=1,2,4$ (that is, for bosonic nodes) a monomial is $\bs l$-dominant if it is a product of 
monomials $\bs l_{a,b}$ and $\tilde {\bs l}_{a,b}^{\pm1}$ with $a,b\in\C$ and $\tilde l\neq l$.
A monomial is $\bs 0$-dominant if it is a product of monomials of the form 
$\bs 0_{a,b} \bs 0^{-1}_{a+2,b}$, $\bs 0_{a,b} \bs 0^{-1}_{a-2,b-2}$, and  $\bs l_{a,b}^{\pm 1}$ 
with $l\neq 0$;  $\bs 3$-dominant if it is a product of monomials of the form 
$\bs 3_{a,b} \bs 3^{-1}_{a+2,b}$, $\bs 3_{a,b} \bs 3^{-1}_{a,b+2}$, and  $\bs l_{a,b}^{\pm 1}$ 
with $l\neq 3$;  $\bs 5$-dominant if it is a product of monomials of the form 
$\bs 5_{a,b} \bs 5^{-1}_{a,b+2}$, $\bs 5_{a,b} \bs 5^{-1}_{a-2,b-2}$, and  $\bs l_{a,b}^{\pm 1}$ with $l\neq 5$.

In \eqref{qq-A} the $l$-th term is $\bs l$-dominant for $1\le l\le 5$, and the last term is 
$\bs 0$-dominant. The $(l+1)$st term is obtained by multiplying the $l$-th term by the 
inverse  of current $A_l(z)$ with an appropriate shift, see \eqref{A}. 
Moreover, the last term and the first term are connected as follows
\begin{align}\label{first and last}
A^{-1}_0(s_1^3s_2^2s_3z)\bs 0_{2,2}\bs 0_{4,2}^{-1}\bs 5_{2,2}\bs 5_{2,0}^{-1} 
=\bs 0_{2,0}\bs 0_{4,2}^{-1}\bs 1_{\gamma+2,1}=\tau_\mu(\bs 0_{\gamma,-1} \bs 0_{\gamma+2,1}^{-1}\bs 1_{0,0}).
\end{align}
\qed
\end{example}
\medskip

While our $qq$-character is closely connected to that of \cite{KP}, they are different in several ways. 
First, our first monomial is $\bs l$-dominant for $l=1,\dots,\ell$, but not $\bs 0$-dominant. 
Second, our $qq$-character is of finite type, namely, applying $\pi_{0}$ we obtain the $qq$-character 
corresponding to the non-affine Cartan matrix. In particular, our $qq$-character contains finitely many terms.
Third, the fermionic nodes are not considered in \cite{KP}\footnote{Fermionic root currents $A_i(z)$ appeared in \cite{BFM}.}.
Fourth, the variables $\bs 0_{a,b}$ do play an important role as they correspond to the dressing of the current 
$e(z)$ and the finite type $qq$-character ``closes up" in the sense of \eqref{first and last}. 

The $qq$-characters of \cite{KP} commute with all screenings operators including the one corresponding to the zeroth node and correspond to modules of quantum toroidal algebra $\E_{\ell+1}$ associated to $\gl_{\ell+1}$. If we start with our $qq$-character and formally require such commutativity, we would have to add infinitely many terms which correspond to the vector representation of $\E_{\ell+1}$, while \cite{KP} starts with a dominant monomial which produces a $qq$-character corresponding to a Fock module of $\E_{\ell+1}$.

\medskip

The usual $q$-character of the evaluation vector representation of $U_q \widehat{\mathfrak{sl}}_\ell$
in the sense of \cite{FR1} is recovered after applying $\pi_0$ in the case of $\otimes_{i=1}^{\ell+1} \mc F_2(u_i)$, 
when all Fock spaces are of the same sort. In this case we have
$$
b_2^{-1}\bs e(z)=\bs 0_{0,\gamma+1}^{-1}\bs 1_{0,0}+\bs 2_{0,1}\bs 1_{0,2}^{-1}+\bs 3_{0,2}\bs 2_{0,3}^{-1}+\dots 
+\bs \ell_{0,\ell+1}^{-1}\bs 0_{0,\ell}\,
$$
where $s_2^\gamma=C\mu^{-1}$. 
\medskip

\subsection{Screenings}\label{sec A screenings}
We discuss the screening operators.

First, we clearly have $[A_i(z),\bs\La_{j}(w)]= 0$ whenever $j\not \equiv i,i+1\ \bmod \ell+1$.

Moreover, for $i=0,1,\dots,\ell$, the non-trivial contractions of $A_i(z)$ have the form \eqref{condition} where 
\begin{align*}
A(z)=\frac{b_{c_i}}{b_{c_{i+1}}}A_i(z)\,,\quad
V(z)=b_{c_i}\bs\La_i((a^A_i)^{-1}z)\,,\quad  V'(z)=b_{c_{i+1}}\bs \La_{i+1}((a^A_i)^{-1}\mu^{\delta_{0,i}}z)\,. 
\end{align*}
Here we set $\bs \La_{\ell+1}(z)=\bs \La_0(z)$.  
If the node is bosonic, $c_i=c_{i+1}$, then
$p_3=p_1=s_c$, $p_2=s_b$, where $\{c_i,b,c\}=\{1,2,3\}$. If the node is fermionic, then 
$p_1=s_{c_{i+1}}$, $p_2=s_d$, $p_3=s_{c_{i}}$, where $\{c_i,c_{i+1},d\}=\{1,2,3\}$.

Accordingly we define two screening currents 
$S_i^\pm(z)=e^{Q_{\sss_i}}z^{\sss^\pm_{i,0}}S_i^{\pm,osc}(z)$ 
for each bosonic node and 
one screening current 
$S_i^f(z)=e^{Q_{\sss_i}}z^{\sss^f_{i,0}}S^{f, osc}_i(z)$ 
for each fermionic node, see \eqref{screening}, \eqref{choose-wt}:
\begin{align}
c_i=c_{i+1}:
&\quad A_i(z)=s_{c_i}s_{c_{i+1}}    
:\frac{S^+_i(s_b^{-1}z)}{S^+_i(s_b z)}:\
=s_{c_i}s_{c_{i+1}} 
:\frac{S^-_i(s_c^{-1}z)}{S^-_i(s_c z)}:\,,
\label{Spm}\\
c_i\neq c_{i+1}:
&\quad A_i(z)=s_{c_i}s_{c_{i+1}} 
:\frac{S^f_i(s_d^{-1}z)}{S^f_i(s_d z)}:\,,
\label{Sf}
\end{align}
where $(c_i,b,c)=cycl(1,2,3)$ and $(c_i,c_{i+1},d)=\{1,2,3\}$. 
We choose $Q_{\sss_i}$ by demanding $[\ssy_{j,0},Q_{\sss_i}]=g_i\delta_{i,j}$, where 
\begin{align}
g_i=\begin{cases}
\log q_c & \text{for $S_i^+$},\\
\log q_b & \text{for $S_i^-$},\\
-1 &  \text{for $S_i^f$}.\\
\end{cases}
\end{align}
In the fermionic case \eqref{Sf}, we set in addition $[\sss_{i,0},Q_{\ssy_j}]=-\delta_{i,j}$ 
so that we have
\begin{align*}
S^f_i(z)Y_i(w)=\frac{1}{z-w}:S^f_i(z)Y_i(w):\,, \quad Y_i(w)S^f_i(z)=\frac{1}{w-z}:Y_i(w)S^f_i(z):\,.
\end{align*}

Introduce the corresponding screening operators $S_i^\pm$, $S^f_i$ by \eqref{Sc-op} when well-defined. 
\bigskip

Let $S_i$ stand for either $S_i^\pm$ when $A_i(z)$ is a bosonic current, 
or $S^f_i$ when $A_i(z)$ is a fermionic current. 
It follow from Lemma \ref{screening lemma} that  
the currents $e(z)$ and $\bs e(z)$ both commute with all $S_i$ with $i\neq0$, and that
$\bs e(z)$ commutes with $S_0$ up to a $\mu$-difference:
\begin{align}
&[S_i, e(z)]=[S_i, \bs e(z)]=0\,, \qquad i=1,\ldots,\ell\,,
\label{ScA}\\
&[S_0,\bs e(z)]=b_{c_1}[S_0,\bs \La_1(z)-\bs \La_1(\mu z)]\,.\label{ScA0}
\end{align}
The relation \eqref{ScA0} implies the commutativity of $S_0$ with integrals of motion, see Theorem \ref{A comm thm} below.
In view of these relations, we call $\bs e(z)$ the deformed $\cW$-current of type $\ssA$.

\bigskip
 
 \begin{example}
We again illustrate the construction of screenings on the example of \eqref{example fock}, see also Example \ref{A example 2}. In this case 
\begin{align*}
&\frac{\bs\Lambda_6(z)}{\bs\Lambda_1(\mu z)}=
s_2^{-1}\frac{S^f_0(s_2^{-1}z)}{S^f_0(s_2z)}\,, \quad
&\frac{\bs\Lambda_1(z)}{\bs\Lambda_2(z)}=
q_1\frac{S_1^\pm(s_{(5\pm1)/2}^{-1}a^A_1z)}{S_1^\pm(s_{(5\pm1)/2}a^A_1z)}\,,
\\
&\frac{\bs\Lambda_2(z)}{\bs\Lambda_3(z)}=
q_1\frac{S_2^\pm(s_{(5\pm1)/2}^{-1}a^A_2z)}{S_2^\pm(s_{(5\pm1)/2}a^A_2z)}\,,
\quad
&\frac{\bs\Lambda_3(z)}{\bs\Lambda_4(z)}=
s_3^{-1}\frac{S^f_3(s_3^{-1}a^A_3z)}{S^f_3(s_3 a^A_3z)}\,,
\\
&\frac{\bs\Lambda_4(z)}{\bs\Lambda_5(z)}=
q_2\frac{S_4^\pm(s_{2\pm 1}^{-1}a^A_4z)}{S_4^\pm(s_{2\pm 1}a^A_4z)}\,,
\quad
&\frac{\bs\Lambda_5(z)}{\bs\Lambda_6(z)}=
s_1^{-1}\frac{S^f_5(s_1^{-1}a^A_5z)}{S^f_5(s_1a^A_5z)}\,.
\end{align*}
In particular the zero modes of screening operators satisfy 
\begin{align*}
&\frac{u_6}{u_1}=s_2^{-1}q_2^{-\sss^f_{0,0}}\,,\ 
\frac{u_1}{u_2}=q_1q_{(5\pm 1)/2}^{-\sss_{1,0}^\pm}\,,\ 
\frac{u_2}{u_3}=q_1q_{(5\pm 1)/2}^{-\sss_{2,0}^\pm},\\
&\frac{u_3}{u_4}=s_3^{-1}q_3^{-\sss^f_{3,0}},\ 
\frac{u_4}{u_5}=q_2q_{2\pm1}^{-\sss_{4,0}^\pm},\ 
\frac{u_5}{u_6}=s_1^{-1}q_1^{-\sss^f_{5,0}}\,.
\end{align*}
Recall that we need $\sss_{i,0}$ to act as an 
integer in order to have well defined screening operators. It dictates some conditions on $u_i$.

It is convenient to rename the zeroth screening current by setting
$S^f_0(z)=z^{\Delta-1}\overline{S}^{f}_0(z)$, $\sss^f_{0,0}=\overline{\sss}^{f}_{0,0}+\Delta-1$, 
where $q_2^{\Delta-1}=C^2$. 
Then the screening operator reads
\begin{align*}
S^f_0=\int z^{\Delta-1}\overline{S}^{f}_0(z)\,dz
\end{align*}
and $\bar{\sss}^f_{0,0}$ is determined from 
\begin{align*}
1=q_2^{-\overline{\sss}^{f}_{0,0}}q_{(5\pm 1)/2}^{-\sss_{1,0}^\pm-\sss_{2,0}^\pm}q_3^{-\sss^f_{3,0}}
q_{2\pm1}^{-\sss_{4,0}^\pm}q_1^{-\sss^f_{5,0}}\,.
\end{align*}

\qed
\end{example}

\subsection{Integrals of motion}\label{int A sec}
One of our primary interests is in constructing commuting families of operators. 
In the case of type $\ssA$ the source of such families is the standard transfer matrix construction, see \cite{FJM}. 
Here we recall the answer, which appeared first in \cite{FKSW}.

Define the Feigin-Odesskii kernel function \cite{FO}:
\begin{align}
&\kfun_2(z)=\frac{\Theta_{\mu}(z)\Theta_{\mu}(q_2^{-1}z)}
{\Theta_{\mu}(q_1 z)\Theta_{\mu}(q_3 z)}\,.
\label{kfun}
\end{align}
The function $\kfun_2(z)$ satisfies a series of identities
parametrized by $m,n\in\Z_{\geq 1}$, see \cite{FKSW}:
\begin{align}
\mathop{\mathrm{Sym}}_{z_1,\cdots,z_{m+n}}\prod_{1\le i\le m\atop m+1\le j\le m+n}\kfun_2(z_j/z_i)^{-1}
=\mathop{\mathrm{Sym}}_{z_1,\cdots,z_{m+n}}\prod_{1\le i\le m\atop m+1\le j\le m+n}\kfun_2(z_i/z_j)^{-1}
\,.
\label{id-h}
\end{align}
The following theorem describes the integrals of motion.

\begin{thm}\label{IMtypeA}
\cite{FKSW}, \cite{FJM}\quad 
The following elements $\{\bI_n\}_{n=1}^\infty$ are mutually commutative: 
\begin{align}\label{integrals}
&\bI_n=
\int\!\!\cdots \!\!\int
\mathbf{e}(z_1)\cdots \mathbf{e}(z_n)
\cdot \prod_{j<k}\kfun_2(z_k/z_j)
\ \prod_{j=1}^n\frac{dz_j}{2\pi i z_j}
\,,
\end{align}
where the integral is taken on the unit circle $|z_j|=1$, $j=1,\ldots,n$
(or a common circle of any radius, due to homogeneity) in the region $|q_1|,|q_3|>1$ and extended by analytic continuation everywhere else.
\end{thm}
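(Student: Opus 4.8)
The plan is to reduce the assertion $[\bI_m,\bI_n]=0$ to the combination of a two–point exchange symmetry between $\bs e(z)$ and the kernel \eqref{kfun} and the symmetrization identity \eqref{id-h}. First I would verify the exchange relation
\[
\bs e(z)\bs e(w)\,\kfun_2(w/z)=\bs e(w)\bs e(z)\,\kfun_2(z/w),
\]
read as an identity of analytically continued matrix elements. By the elliptic commutation relation for $\bs e(z)$ this is equivalent to $\prod_{i=1}^3\Theta_\mu(q_iw/z)/\Theta_\mu(q_i^{-1}w/z)=\kfun_2(z/w)/\kfun_2(w/z)$. Setting $x=w/z$ and applying the reflection $\Theta_\mu(1/a)=\Theta_\mu(\mu a)$ and quasi-periodicity $\Theta_\mu(\mu a)=-a^{-1}\Theta_\mu(a)$ (both immediate from $\Theta_\mu(z)=(z,\mu z^{-1},\mu;\mu)_\infty$) to \eqref{kfun}, I would compute $\kfun_2(1/x)=\Theta_\mu(x)\Theta_\mu(q_2x)/\bigl(\Theta_\mu(q_1^{-1}x)\Theta_\mu(q_3^{-1}x)\bigr)$; here the monomial prefactors produced by quasi-periodicity combine to $q_2^{-1}/(q_1q_3)$, which equals $1$ precisely because $q_1q_2q_3=1$. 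Dividing by $\kfun_2(x)$ then yields the desired product.

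From this two-point relation I would deduce that the $n$-variable integrand
\[
F_n(z_1,\dots,z_n)=\bs e(z_1)\cdots\bs e(z_n)\prod_{1\le j<k\le n}\kfun_2(z_k/z_j)
\]
is invariant under each adjacent transposition $z_i\leftrightarrow z_{i+1}$: such a swap turns the block $\bs e(z_i)\bs e(z_{i+1})\kfun_2(z_{i+1}/z_i)$ into $\bs e(z_{i+1})\bs e(z_i)\kfun_2(z_i/z_{i+1})$, which are equal by the exchange relation, while every other kernel factor is untouched. Hence $F_n$ is a fully $\mathfrak{S}_n$-symmetric operator-valued meromorphic function, and $\bI_n=\int F_n\prod_j dz_j/(2\pi i z_j)$.

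Next I would introduce the symmetric $(m+n)$-point function
\[
G(\zeta_1,\dots,\zeta_{m+n})=\bs e(\zeta_1)\cdots\bs e(\zeta_{m+n})\prod_{1\le a<b\le m+n}\kfun_2(\zeta_b/\zeta_a),
\]
which is $\mathfrak{S}_{m+n}$-symmetric by the previous step. Writing $\zeta=(z_1,\dots,z_m,w_1,\dots,w_n)$ and inserting or deleting only the cross kernel factors (no operator reordering is required, since the scalar kernels commute with everything), one obtains
\[
\bI_m\bI_n=\int G\prod_{j=1}^m\prod_{k=1}^n\kfun_2(w_k/z_j)^{-1}\,d\zeta,\qquad
\bI_n\bI_m=\int G\prod_{j=1}^m\prod_{k=1}^n\kfun_2(z_j/w_k)^{-1}\,d\zeta,
\]
with $d\zeta=\prod_a dz_a/(2\pi i z_a)$ over the common torus $|\zeta_a|=1$. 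Since $G$, the measure and the contour are all $\mathfrak{S}_{m+n}$-invariant, relabeling the integration variables lets me replace each cross factor by its symmetrization over all $m+n$ arguments without changing the integral; the identity \eqref{id-h} states exactly that these two symmetrizations coincide, so the two integrals are equal and $[\bI_m,\bI_n]=0$.

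The principal difficulty is analytic rather than algebraic. The operator products are defined only as analytic continuations of convergent matrix elements, so I would need to check that in the region $|q_1|,|q_3|>1$ with $|\mu|<1$ the poles of $\kfun_2$ and of the structure functions lie off the unit torus, that the multiple integrals converge there, and that the variable relabeling implementing the symmetrization crosses no poles; the statement everywhere else is then obtained by analytic continuation. Verifying this pole placement and the legitimacy of symmetrizing under the integral sign is the step that requires genuine care.
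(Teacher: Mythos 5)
Your proposal is correct and coincides with the direct argument the paper's proof points to: the paper does not write out details but cites \cite{FKSW} for exactly this proof (symmetry of the integrand from the elliptic exchange relation, then the symmetrization identity \eqref{id-h} applied to the cross kernels), and \cite{FJM} for an alternative transfer-matrix derivation. The analytic points you flag at the end (pole placement relative to the unit torus in the region $|q_1|,|q_3|>1$, cancellation of diagonal and $q_2$-type poles by the zeros of $\kfun_2$, and convergence) are indeed the only remaining content, and they are handled in \cite{FKSW} rather than in this paper.
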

\begin{proof}
In \cite{FKSW} the theorem is proved directly (on tensor products of Fock spaces) 
with the use of the identity \eqref{id-h}. 

In \cite{FJM}, it is shown that $\bI_n$, up to a constant, are 
Taylor coefficients of the transfer matrix corresponding to the Fock space $\F_2(u)$, where $\mu$ is the twist parameter. Then the commutativity follows from the standard argument with R-matrix.
\end{proof}

We note that in the construction of integrals of motion one can replace $\kfun_2(z)$  with functions
\begin{align*}
&\kfun_1(z)=\frac{\Theta_{\mu}(z)\Theta_{\mu}(q_1^{-1}z)}
{\Theta_{\mu}(q_2 z)\Theta_{\mu}(q_3 z)}\,,\qquad  
&\kfun_3(z)=\frac{\Theta_{\mu}(z)\Theta_{\mu}(q_3^{-1}z)}
{\Theta_{\mu}(q_1 z)\Theta_{\mu}(q_2 z)}\,.
\end{align*}
It is known that while individual integrals $\bI_n$ with $n\geq 2$ depend on this choice, the algebra generated by all $\bI_n$ does not.

Let us verify the commutativity with screenings.
\begin{thm}\label{A comm thm} 
The integrals of motion commute with all screening operators, 
\begin{align*}
[S_i,\bI_n]=0, \qquad i=0,1,\ldots,\ell,\ n\ge 1.
\end{align*}
\end{thm}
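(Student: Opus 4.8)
The statement separates into the generic nodes $i=1,\dots,\ell$, governed by \eqref{ScA}, and the affine node $i=0$, governed by \eqref{ScA0}; the first case is essentially immediate and the second is the substance of the theorem. For $i=1,\dots,\ell$, relation \eqref{ScA} gives $[S_i,\mathbf{e}(z)]=0$, so in the integrand of \eqref{integrals} the operator $S_i$ commutes with every factor $\mathbf{e}(z_j)$. The Feigin--Odesskii kernel $\prod_{j<k}\kfun_2(z_k/z_j)$ is a scalar and does not involve the screening, so $S_i$ may be pulled through the whole integrand and past the contour integration, whence $[S_i,\bI_n]=0$.

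For the affine node, set $G(z)=b_{c_1}[S_0,\bs\La_1(z)]$, so that \eqref{ScA0} reads $[S_0,\mathbf{e}(z)]=G(z)-G(\mu z)$, a total $\mu$-difference. Expanding $[S_0,\bI_n]$ by the Leibniz rule, its $k$-th summand is the integral \eqref{integrals} with the factor $\mathbf{e}(z_k)$ replaced by $G(z_k)-G(\mu z_k)$. The plan is to show each summand is controlled by a contour shift $z_k\mapsto\mu^{-1}z_k$ applied to the $G(\mu z_k)$ piece. The input that makes this effective is the quasi-periodicity $\Theta_\mu(\mu z)=-z^{-1}\Theta_\mu(z)$: combined with $q_1q_2q_3=1$, the four theta ratios in $\kfun_2$ contribute monomials whose product is $q_1q_2q_3=1$, giving the ellipticity
\begin{align*}
\kfun_2(\mu z)=\kfun_2(z)\,.
\end{align*}
Hence the full kernel $\prod_{j<k}\kfun_2(z_k/z_j)$ is invariant under $z_k\mapsto\mu z_k$ for each fixed $k$.

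Granting this, let $F(z_k)$ be the operator-valued integrand of the $G(z_k)$ contribution, namely $\mathbf{e}(z_1)\cdots G(z_k)\cdots\mathbf{e}(z_n)$ times the kernel, with the remaining variables on $|z_j|=1$. In the $G(\mu z_k)$ contribution substitute $z_k\mapsto\mu^{-1}z_k$: the measure $dz_k/z_k$ is invariant, the factors $\mathbf{e}(z_j)$ with $j\ne k$ are untouched, $G(\mu z_k)$ becomes $G(z_k)$, and by the ellipticity above the kernel is restored to its original form, the only effect being that the contour $|z_k|=1$ is carried to $|z_k|=|\mu|$. Therefore the $k$-th summand equals
\begin{align*}
\Bigl(\oint_{|z_k|=1}-\oint_{|z_k|=|\mu|}\Bigr)F(z_k)\,\frac{dz_k}{2\pi i z_k}\,,
\end{align*}
which by Cauchy's theorem is the sum of the residues of $F(z_k)/z_k$ in the annulus $|\mu|<|z_k|<1$.

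The crux, and the step I expect to be hardest, is to show that these annulus residues cancel. They come from the poles of $\kfun_2(z_k/z_j)$ and $\kfun_2(z_{k'}/z_k)$ and from the contractions of the screening current carried by $G(z_k)$ with the neighbouring vertex operators. In the region $|q_1|,|q_3|>1$ with the prescription of Theorem \ref{IMtypeA}, such poles generically do lie inside the annulus, so the vanishing is \emph{not} termwise; it must occur only after summing over $k$. I would establish it by computing the residue at each colliding pair $(z_k,z_{k\pm1})$, using the explicit contraction \eqref{screening} that defines $G$ together with the ellipticity of $\kfun_2$, and matching it against the residue produced by the adjacent term of the Leibniz sum, so that the contributions telescope to zero. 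This is the same circle of ideas, now adapted to the screening insertion, that underlies the kernel identity \eqref{id-h} used for the mutual commutativity of the $\bI_n$.
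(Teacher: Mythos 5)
Your treatment of $i=1,\dots,\ell$ and your setup for $i=0$ --- the Leibniz expansion, the total $\mu$-difference structure of $[S_0,\bs e(z)]$, the ellipticity $\kfun_2(\mu z)=\kfun_2(z)$, and the reduction of each term to a sum of residues in the annulus $|\mu|<|z_k|<1$ --- all agree with the paper's argument. The gap is in the final step, which you correctly flag as the crux but for which your proposed mechanism cannot work. By the symmetry of the integrand and the commutativity $S_0(z)\bs e(w)=\bs e(w)S_0(z)$ as meromorphic functions, all $n$ terms of the Leibniz sum are \emph{equal}, so the sum over $k$ is just $n$ times a single representative term; there is nothing to telescope between adjacent values of $k$, and the vanishing must be established for that one term. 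Matching a residue against ``the residue produced by the adjacent term of the Leibniz sum'' is therefore not an available strategy, and the analogy with the kernel identity \eqref{id-h} does not supply the missing cancellation.

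What the paper actually does for the representative term (with the screening insertion in the $z_1$ slot) is the following. After reducing, by permuting $\kfun_1,\kfun_2,\kfun_3$, to the two cases $c_1=c_{\ell+1}=2$ and $c_1=1,\,c_{\ell+1}=3$, one finds that in the second case there are no poles in the annulus at all, while in the first case the only obstructing poles in $z_1$ are at $z_1=q_1^{-1}z_2$ and $z_1=\mu q_1 z_2$ (and their analogues for the other $z_j$), present only when $|\mu q_1|<1$. These residues do not cancel; instead, via the identity $q_2:S_0(s_1^{-1}z)\bs\Lambda_1(\mu z):\,=\,:S_0(s_1 z)\bs\Lambda_{\ell+1}(z):$, they reassemble into a new exact difference $\cA^{(2)}(z_2)-\cA^{(2)}(\mu q_1 z_2)$ in the next integration variable, with a modified current and a modified step $\mu q_1$. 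One then repeats the contour-shift argument, and after finitely many iterations either the integration variables are exhausted or $|\mu q_1^m|>1$, at which point the shift is unobstructed and the integral vanishes. Your sketch is missing this iterative recombination, which is the actual content of the proof; without it the assertion that the annulus residues vanish is unsupported.
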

\begin{proof}
For  $i\neq 0$, this follows readily from \eqref{ScA}. 
We check the case $i=0$ using \eqref{ScA0}. 
By replacing $\omega_2(x)$ by $\omega_3(x)$ or $\omega_1(x)$ if necessary, it suffices to consider the two cases, 
 $c_1=c_{\ell+1}=2$, or $c_1=1,c_{\ell+1}=3$. 
We assume $|q_1|,|q_3|>1$.

First consider the case $c_1=c_{\ell+1}=2$, $S_0=S_0^-$. Then
\begin{align*}
[S_0,\bs\Lambda_1(z)]=const.
\cA^{(1)}(z)\,,
\quad 
\cA^{(1)}(z)=z:S_0(s_1^{-1}\mu^{-1}z)\bs\Lambda_1(z):\,.
\end{align*}
Noting the symmetry of the integrand and the commutativity 
$S_0(z)\bs e(w)=\bs e(w)S_0(z)$ as meromorphic functions, 
we obtain 
\begin{align*}
[S_0,\bI_n]&\propto 
\sum_{i=1}^n 
\int\!\!\cdots \!\!\int
\mathbf{e}(z_1)\cdots (\cA^{(1)}(z_i)-\cA^{(1)}(\mu z_i))\cdots \mathbf{e}(z_n)
\cdot \prod_{j<k}\kfun_2(z_k/z_j)
\ \prod_{j=1}^n\frac{dz_j}{2\pi i z_j}
\,
\\
&\propto
\int\!\!\cdots \!\!\int
(\cA^{(1)}(z_1)-\cA^{(1)}(\mu z_1))\mathbf{e}(z_2) \cdots \mathbf{e}(z_n)
\cdot \prod_{j<k}\kfun_2(z_k/z_j)
\ \prod_{j=1}^n\frac{dz_j}{2\pi i z_j}\,.
\end{align*}
Let us examine the  relevant poles of $\cA^{(1)}(z_1)\bs e(z_j)$. For symmetry reasons we consider $j=2$. 
We have
\begin{align*}
&\omega_2(z_2/z_1)\cA^{(1)}(z_1)\bs\Lambda_l(z_2)
=F(z_2/z_1) F(\mu^{-1}z_1/z_2) f^{(1)}_l(z_2/z_1)\times:\cA^{(1)}(z_1)\bs\Lambda_l(z_2):\,,
\end{align*}
where
\begin{align*}
&F(x)=\frac{(\mu x,\mu q_2x;\mu)_\infty}{(\mu q_1^{-1}x,\mu q_3^{-1}x;\mu)_\infty}\,,
\\
&f^{(1)}_l(x)=
\begin{cases}
\frac{1-q_2x}{1-q_1^{-1}x} & (l=1),\\
\frac{1-\mu q_2^{-1}x}{1-\mu q_1x} & (l=\ell+1),\\
 1 & (l\neq 1,\ell+1).\\
\end{cases}
\end{align*}
All integrals are taken over the unit circle. 
If $|\mu q_1|>1$, one easily checks that the $z_1$ integral vanishes. 
Otherwise one has to pick the residues at $z_1=q_1^{-1} z_2, \mu q_1 z_2$ coming from $f^{(1)}_l(z_2/z_1)$.
Using again the relation $q_2:S_0(s_1^{-1}z)\bs \Lambda_1(\mu z):\,\,=\,\,:S_0(s_1z)\bs \Lambda_{\ell+1}(z):$, 
we find that 
\begin{align*}
[S_0,\bI_n]&\propto 
\int\!\!\cdots \!\!\int
(\cA^{(2)}(z_2)-\cA^{(2)}(\mu q_1 z_2))\mathbf{e}(z_3) \cdots \mathbf{e}(z_n)
\cdot \prod_{k=3}^n \kfun^{(2)}_2(z_k/z_2)
\cdot \prod_{3\le j<k}\kfun_2(z_k/z_j)
\ \prod_{j=2}^n\frac{dz_j}{2\pi i z_j}\,,
\end{align*}
where 
\begin{align*}
&\cA^{(2)}(z)=z:S_0(s_1^{-1}q_1^{-1}\mu^{-1}z)\bs \Lambda_1(q_1^{-1}z)\bs\Lambda_1(z):\,,
\quad \omega^{(2)}_2(x)=\omega_2(q_1 x)\omega_2(x)\,.
\end{align*}
This current has properties similar to $\cA^{(1)}(z)$ except that $\mu$ is changed to $\mu q_1$, namely
\begin{align*}
&\omega^{(2)}_2(z_2/z_1)\cA^{(2)}(z_2)\bs\Lambda_l(z_3)
=F(z_3/z_2) F(q_1^{-1}\mu^{-1}z_2/z_3) f^{(2)}_l(z_3/z_2)\times:\cA^{(2)}(z_2)\bs\Lambda_l(z_3):\,,
\end{align*}
where $f^{(2)}_{\ell+1}(x)=f^{(1)}_{\ell+1}(q_1x)$ and $f^{(2)}_l(x)=f^{(1)}_l(x)$ ($l\neq\ell+1$). 
We repeat this process several times, 
until either there are no other integration variables left, or until
 $|\mu q_1^m|>1$ with some $m$ and the shifting $z\to \mu q_1^m z$ is no longer obstructed.

In the case $c_1=1$ and $c_{\ell+1}=3$, the analog of $f^{(1)}_l(x)$ becomes
\begin{align*}
&\tilde f^{(1)}_l(x)=
\begin{cases}
\frac{1-q_1x}{1-q_2^{-1}x} & (l=1),\\
\frac{1-\mu q_3^{-1}x}{1-\mu q_2x} & (l=\ell+1),\\
 1 & (l\neq 1,\ell+1).\\
\end{cases}
\end{align*}
There are no poles which obstruct the shift $z\to \mu z$.  
\end{proof}

\medskip

\section{Algebra $\cK$}\label{section BCD}
We define an algebra $\cK$ which produces the deformed $\cW$ currents of types $\ssB,\ssC,\ssD$ 
in the same way as the algebra $\E$ produces the deformed $\cW$ current of type $\ssA$. 
We think of algebra $\cK$ as a
``twisted"  \cite{O} analog of algebra $\E$ or as a ``coideal" subalgebra \cite{L}. 
\subsection{Algebra $\cK$}\label{sec K}

We define an algebra $\cK$ with generating currents
\begin{align*}
&E(z)=\sum_{n\in \Z}E_n z^{-n} \,,\quad
K^\pm(z)=\exp\bigl(\sum_{\pm r> 0}H_r z^{-r}\bigr)\,,
\end{align*}
and an invertible central element $C$. We set 
\begin{align*}
&K(z)=K^-(z)K^+(C^2 z)\,.
\end{align*}

The defining relations are as follows. 
\begin{align}
&g(z,w)E(z)E(w)+g(w,z)E(w)E(z)=
\frac{1}{\kappa_1}
\Bigl(g(z,w)\delta\bigl(C^2\frac{z}{w}\bigr) K(z)
+g(w,z)\delta\bigl(C^2\frac{w}{z}\bigr) K(w)\Bigr)\,,
\label{EE}\\
&K^{\pm}(z)K^{\pm}(w)=K^{\pm}(w)K^{\pm}(z)\,,\label{KK1}\\
&g(z,w) g(z,C^2w)\, K^+(z)K^-(w)=\bar g(z,w)\bar g(z, C^2w)\, K^-(w)K^+(z)\,,
\label{KK2}\\
&g(z,w)\, K^\pm(z)E(w)=\bar g(z,w) E(w)\,K^\pm(z)\,, 
\label{KE}\\
&\mathop{\mathrm{Sym}}_{z_1,z_2,z_3}\frac{z_2}{z_3}
[E(z_1),[E(z_2),E(z_3)]]
=\mathop{\mathrm{Sym}}_{z_1,z_2,z_3}X(z_1,z_2,z_3)\kappa_1^{-1}\delta(C^2 z_1/z_3)K^-(z_1)E(z_2)K^+(z_3)\,,
\label{EEE}
\end{align}
where
\begin{align*}
&\mathop{\mathrm{Sym}}_{z_1,\ldots,z_N}\ f(z_1,\dots,z_N) =\frac{1}{N!}
\sum_{\pi\in\mathfrak{S}_N} f(z_{\pi(1)},\dots,z_{\pi{(N)}})\,,\\
&X(z_1,z_2,z_3)=
\frac{(z_1+z_2)(z_3^2-z_1z_2)}{z_1z_2z_3}G(z_2/z_3)
+\frac{(z_2+z_3)(z_1^2-z_2z_3)}{z_1z_2z_3}G(z_1/z_2)
+\frac{(z_3+z_1)(z_2^2-z_3z_1)}{z_1z_2z_3}\,,
\end{align*}
and $G(w/z)$ stands for the power series expansion of $\bar g(z,w)/g(z,w)$ in $w/z$.
We note that as a rational function 
\begin{align*}
X(z_1,z_2,z_3)=\frac{\kappa_1 g(z_1,z_3)}{g(z_3,z_2)g(z_2,z_1)}(z_3+z_2)(z_1+z_2)(z_2^2-z_3z_1)\frac{z_2}{z_3 z_1}\,.
\end{align*}

The relations for $K^\pm(z)$ are equivalent to
\begin{align*}
[H_r,H_{s}]=- \delta_{r+s,0} \kappa_r\frac{1+C^{2r}}{r} \,.
\end{align*}
Note the difference to \eqref{hh}.

In the presence of quadratic relations, the Serre relation \eqref{EEE} can be reformulated in terms of 
correlation functions, see Lemma \ref{zero-cond} below.

Note that the Fourier coefficients of $K(z)$ are infinite series. 
Therefore the relation \eqref{EE} requires some justification. We proceed as follows.

We define the homogeneous grading of generators by $\deg E_n=n$, $n\in\Z$, $\deg H_{\pm r}=\pm r$, $r\in\Z_{>0}$. Consider the free algebra $\mc A$ generated by $E(z), H(z)$. Let $\widetilde {\mc A}$ be the completion of $\mc A$ with respect to the grading in the positive direction. The elements of $\widetilde {\mc A}$ are series of the form $\sum_{i>-N}f_ig_i$, where $f_i,g_i\in \mc A$ and $\deg g_i=i$. Then we consider $\cK$ to be a graded quotient algebra of $\widetilde {\mc A}$.

We call a $\cK$ module $V$ admissible if $V$ is a graded module and $V=\oplus_{i<N} V_i$, 
where $V_i=\{v\in V, \deg v =i\}$, and if in addition $C$ is diagonalizable. 
If $V$ is an admissible module, then for any $v\in V$, the series $K^+(z)v$ is a polynomial in $z^{-1}$ with values in $V$ and therefore $K(z)v$ is well defined.

Finally, we note that there are obvious automorphisms of $\cK$:
\begin{align}
&\iota:\cK \to \cK, &\qquad E(z)\mapsto -E(z),\quad  &K^\pm(z)\mapsto K^\pm(z),&\label{iota} \\
&\tau_a:\cK \to \cK, &\qquad E(z)\mapsto E(az),\quad  &K^\pm(z)\mapsto K^\pm(az) &(a\in\C^\times).\label{tau}
\end{align}

\subsection{Left comodule structure}\label{sec comodule}
The algebra $\cK$ does not seem to have a natural coproduct. 
Instead it is a comodule over the quantum toroidal algebra $\E$.

Consider the tensor product algebra of $\E$ and $\cK$. 
We denote by $\E\tilde\otimes \cK$ the completion of the tensor algebra $\E\otimes \cK$ 
with respect to the homogeneous grading in the positive direction.
\begin{prop}
The following map $\Delta:\cK\to \E\tilde {\otimes}\cK$ endows
 $\cK$ with a structure of a left $\E$-comodule:
\begin{align}\label{comodule formula}
&\Delta E(z)=e(C_2^{-1}z)\otimes K^+(z)+1\otimes E(z)+\ft(C_2z)\otimes K^-(z)\,,\notag
\\
&\Delta K^+(z)=\psi^+(C^{-1}_1C^{-1}_2z)\otimes K^+(z)\,,
\\
&\Delta K^-(z)=\psi^-(C_2z)^{-1}\otimes K^-(z)\,,\notag
\\
&\Delta C=C\otimes C\,,\notag
\end{align}
where $C_1=C\otimes 1$, $C_2=1\otimes C$.
\qed 
\end{prop}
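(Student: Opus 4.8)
The plan is to check that the prescribed $\Delta$ is a well-defined algebra homomorphism into the completion $\E\tilde\otimes\cK$ and that it obeys the two left-comodule axioms. Since $\Delta$ is specified on the generating currents of $\cK$ and extended multiplicatively, the only nontrivial point for well-definedness is that the images of the currents satisfy the defining relations \eqref{EE}--\eqref{EEE}, so that $\Delta$ descends to the quotient; one should also note that each summand in \eqref{comodule formula}, such as $e(C_2^{-1}z)\otimes K^+(z)$, is a genuine element of the completed tensor product. The counit axiom $(\epsilon\otimes\id)\circ\Delta=\id$ is immediate: since $\epsilon(e(z))=\epsilon(\ft(z))=0$ and $\epsilon(\psi^\pm(z))=\epsilon(C)=1$, applying $\epsilon\otimes\id$ to the four lines of \eqref{comodule formula} returns $E(z),K^+(z),K^-(z),C$.

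For coassociativity $(\Delta\otimes\id)\circ\Delta=(\id\otimes\Delta)\circ\Delta$, I would evaluate both sides on each generating current, using the $\E$-coproduct formulas \eqref{e coproduct}, \eqref{psi+ coproduct} and $\Delta\ft(z)=\ft(C_2z)\otimes\psi^-(z)^{-1}+1\otimes\ft(z)$. On $K^\pm(z)$ this reduces to the known coassociativity of $\Delta$ on $\psi^\pm$ in $\E$. On $E(z)$, each of the three summands of \eqref{comodule formula} reproduces itself under both composites, the $e\otimes K^+$ and $\ft\otimes K^-$ parts splitting according to $\Delta_\E e$, $\Delta_\E\psi^+$ and $\Delta_\E\ft$, $\Delta_\E\psi^-$ respectively; the two sides agree after a bookkeeping of the central shifts, which is consistent because $\Delta C=C\otimes C$.

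The substance of the proof is that $\Delta$ respects \eqref{EE}--\eqref{EEE}. The relations \eqref{KK1}, \eqref{KK2}, \eqref{KE} are the mildest: inserting $\Delta K^\pm(z)=\psi^\pm(\cdots z)\otimes K^\pm(z)$ and $\Delta E(w)$, they follow by combining the $\psi$-$\psi$, $\psi$-$e$, $\psi$-$f$ relations of $\E$ in the first factor with the corresponding $\cK$-relations in the second, the $g/\bar g$ factors matching thanks to the central shifts built into \eqref{comodule formula}. The key case is \eqref{EE}. Writing $\Delta E(z)=A(z)+B(z)+D(z)$ with $A=e(C_2^{-1}z)\otimes K^+$, $B=1\otimes E$, $D=\ft(C_2z)\otimes K^-$, one expands $g(z,w)\Delta E(z)\Delta E(w)+g(w,z)\Delta E(w)\Delta E(z)$ into nine groups. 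The $A$-$A$ and $D$-$D$ groups vanish by the quadratic $\E$-relations $g(z,w)e(z)e(w)=\bar g(z,w)e(w)e(z)$ and its $f$-analogue (using $\bar g(z,w)=-g(w,z)$) together with $K^\pm(z)K^\pm(w)=K^\pm(w)K^\pm(z)$; the mixed $A$-$B$ and $B$-$D$ groups (in both orders) vanish by the $K$-$E$ relation \eqref{KE}. What remains is the $B$-$B$ group, which by relation \eqref{EE} inside $\cK$ produces $1\otimes K$, and the $A$-$D$, $D$-$A$ groups, in which the product $e(C_2^{-1}z)\ft(C_2w)$ is evaluated through the $\E$-commutator $[e(z),f(w)]=\kappa_1^{-1}\bigl(\delta(Cw/z)\psi^+(w)-\delta(Cz/w)\psi^-(z)\bigr)$. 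The delta-functions localize these contributions; the pieces sitting at spurious loci cancel against those coming from the $B$-$B$ group, while the remaining pieces, after the accompanying $\psi^\pm$ are merged with the $K^\pm$ factors, reassemble exactly into $\Delta K(z)$ and $\Delta K(w)$ at the correct loci $z/w=\Delta(C)^{\pm2}$.

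I expect the central difficulty to be twofold. First, already in \eqref{EE} the matching is delicate because the delta-functions produced by $[e,f]$ carry the $\E$-central element $C_1$ whereas those from the $B$-$B$ term carry only $C_2$; the cancellation of spurious loci and the recombination into $\Delta K(z)=\Delta(K^-(z))\,\Delta(K^+(C^2z))$ depend on carefully tracking every $C_1,C_2$ shift in the arguments of $e,\ft,\psi^\pm$, which is exactly where $\Delta C=C\otimes C$ and the $\psi$-$e$, $\psi$-$f$ relations of $\E$ are used. Second, and hardest, is the cubic Serre relation \eqref{EEE}: after substituting $\Delta E$ into $\mathop{\mathrm{Sym}}_{z_1,z_2,z_3}(z_2/z_3)[E(z_1),[E(z_2),E(z_3)]]$, one must show that the Serre relations of $\E$ for $e$ and $f$, combined with the already-verified quadratic relations, reproduce the inhomogeneous right-hand side built from $X(z_1,z_2,z_3)$ and $K^-(z_1)E(z_2)K^+(z_3)$. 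Reformulating \eqref{EEE} through correlation functions, as in Lemma \ref{zero-cond}, is the natural way to organize this final and most laborious verification.
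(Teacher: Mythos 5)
Your proposal follows essentially the same route as the paper's own proof, which likewise reduces the statement to checking that $\Delta$ preserves the relations \eqref{EE}--\eqref{EEE} (described there only as ``a straightforward but long calculation''), plus the short verifications of coassociativity and the counit axiom; your nine-term decomposition of \eqref{EE} and your identification of the Serre relation \eqref{EEE} as the genuinely laborious step are consistent with this. The only ingredient the paper records that you do not is the explicit rational-function identity in $g_{i,j}=g(z_i,z_j)$ used to match the inhomogeneous right-hand side of \eqref{EEE}, but your suggestion to organize that check via correlation functions as in Lemma \ref{zero-cond} is a legitimate substitute.
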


\begin{proof}
Checking that $\Delta$ preserves the relations \eqref{EE}--\eqref{EEE}, especially the last one, 
demands a  straightforward  but long calculation. 
We use the identity
\begin{align*}
&
\Bigl(\frac{z_1}{z_2}-\frac{z_2}{z_1}\Bigr)(g_{1,3}g_{2,3}-g_{3,1}g_{3,2})
+\Bigl(\frac{z_2}{z_3}-\frac{z_3}{z_2}\Bigr)
g_{3,1}(g_{2,3}+g_{3,2})
+\Bigl(\frac{z_1}{z_3}-\frac{z_3}{z_1}\Bigr)
(g_{1,3}+g_{3,1})g_{2,3}   
\\
&=\kappa_1\Bigl(1-\frac{z_3^2}{z_1z_2}\Bigr)(z_1+z_3)(z_2+z_3)z_3 g_{1,2}\,,
\end{align*}
where $g_{i,j}=g(z_i,z_j)$. 

Coassociativity $(\Delta\otimes \id)\circ \Delta=(\id\otimes\Delta)\circ \Delta$ 
mapping $\cK\to\E\tilde\otimes\E\tilde\otimes \cK$ is a short direct calculation.  
The counit property $(\epsilon\otimes \id)\circ \Delta=\id$ mapping $\cK\to\cK$ is obvious.
\end{proof}
\medskip

\subsection{Boundary Fock modules}\label{sec boundary}
We describe several representations of algebra $\cK$, given in one free boson.

For a complex number $k\in\C^\times$, 
let $\cH_k$ be the Heisenberg algebra generated by 
$\{H_r\}_{r\neq0}$ with the relations $[H_r,H_s]=-\delta_{r+s,0}\kappa_r (1+k^{2r})/r$.

Keeping in mind the Dynkin types $\ssB$ and $\ssC,\ssD$, for $c\in\{1,2,3\}$ we set
\begin{align*}
\cH_c^\ssB=\cH_{s_c^{1/2}}\,, \qquad \cH_c^\ssCD=\cH_{s_c^{-1}}\,,
\end{align*}
and denote by $\F^\ssB_c$, $\F^\ssCD_c$ the corresponding Fock modules of the Heisenberg algebra.

\medskip

We have three $\cK$ modules of type $\ssB$. Set
\begin{align*}
k^\ssB_c=\frac{(1+s_c)(s_d-s_b)}{\kappa_1}\,, \qquad (c,d,b)=cycl(1,2,3).
\end{align*}
Define a vertex operator $\tilde{K}^\pm_c(z)$ by 
\begin{align*}
 \tilde K^\pm_c(z)=\exp\Big(\sum_{\pm r>0} \frac{1}{1+s_c^{-r}}H_rz^{-r}\Big).
\end{align*}
We have $\tilde{K}^\pm_c(z) \tilde{K}^\pm_c(s_c z)=K^\pm(z)$.
Set $\tilde{K}_c(z)=\tilde{K}^-_c(z)\tilde{K}^+_c(s_c z)$.

\begin{prop}\label{prop B boundary} For $c\in\{1,2,3\}$, the map $\cK\to \cH^\ssB_c$ sending
\begin{align*}
E(z)\mapsto k^\ssB_c \tilde{K}_c(z)\,,\quad K^{\pm}(z)\mapsto K^{\pm}(z)\,,\quad C\mapsto s_c^{1/2}\,,
\end{align*} 
endows $\F^\ssB_c$ with a structure of a $\cK$ module of level $s_c^{1/2}$.
\end{prop}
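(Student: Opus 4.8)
The plan is to verify that the three images satisfy the defining relations \eqref{EE}--\eqref{EEE} of $\cK$ with $C\mapsto s_c^{1/2}$, so in particular $C^2\mapsto s_c$; since everything lives in a single boson, each relation reduces to a vertex-operator contraction. Relations \eqref{KK1} and \eqref{KK2} are immediate: as noted in Section \ref{sec K} they are equivalent to the Heisenberg relation $[H_r,H_s]=-\delta_{r+s,0}\kappa_r(1+C^{2r})/r$, and with $C^2=s_c$ this is exactly the defining relation of $\cH^\ssB_c=\cH_{s_c^{1/2}}$. Since $C$ acts by $s_c^{1/2}$, the module $\F^\ssB_c$ has level $s_c^{1/2}$.

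For \eqref{KE} I would contract $K^\pm(z)$ with $\tilde K_c(w)$. Moving the annihilation part of $K^+(z)$ past the creation part $\tilde K^-_c(w)$ produces $\exp(-\sum_{r>0}\frac{\kappa_r}{r}(w/z)^r)=\bar g(z,w)/g(z,w)$, whereas $\tilde K_c(w)K^+(z)$ is already normally ordered; hence $g(z,w)K^+(z)\tilde K_c(w)=\bar g(z,w)\tilde K_c(w)K^+(z)$, and the case of $K^-(z)$ is identical. This is \eqref{KE}.

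The quadratic relation \eqref{EE} is the heart of the matter. Using $(1+s_c^r)/(1+s_c^{-r})=s_c^r$ one finds $\kappa_r/(1+s_c^{-r})=s_c^r(1-s_c^r)\prod_{i\neq c}(1-q_i^r)$, and the single nontrivial contraction gives, with $x=w/z$,
\[
\tilde K_c(z)\tilde K_c(w)=\frac{(1-x)(1-s_c^{-2}x)}{(1-s_cx)(1-s_c^{-1}x)}\prod_{i\neq c}\frac{1-s_cq_ix}{1-q_ix}\,:\tilde K_c(z)\tilde K_c(w):\,.
\]
Multiplying by $g(z,w)=z^3(1-s_c^2x)\prod_{i\neq c}(1-q_ix)$ cancels the factors $\prod_{i\neq c}(1-q_ix)$ and leaves a rational function whose only poles lie at $x=s_c^{\pm1}$, i.e. at $w=s_c^{\pm1}z$. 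The two orderings appearing in \eqref{EE} are then the expansions of one and the same rational function around $x=0$ and $x=\infty$, so by the standard mechanism by which the difference of the two expansions of $1/(1-y)$ equals $\delta(y)$, their combination is supported on $\delta(s_cz/w)$ and $\delta(s_cw/z)$. On the support $w=s_cz$ the normally ordered product collapses, since $\tilde K^\pm_c(z)\tilde K^\pm_c(s_cz)=K^\pm(z)$ gives $:\tilde K_c(z)\tilde K_c(s_cz):=K^-(z)K^+(C^2z)=K(z)$; matching the residue there reproduces the term $\frac{1}{\kappa_1}g(z,w)\delta(s_cz/w)K(z)$ and, by symmetry, the $K(w)$ term. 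The requirement that the scalar in front of $K(z)$ be exactly $1/\kappa_1$ is precisely what forces the normalization $k^\ssB_c=(1+s_c)(s_d-s_b)/\kappa_1$.

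The main obstacle is the cubic relation \eqref{EEE}. Here I would use the reformulation of the Serre relation in terms of correlation functions that becomes available once the quadratic relations hold (Lemma \ref{zero-cond}). In the one-boson realization the three-point object $\langle\tilde K_c(z_1)\tilde K_c(z_2)\tilde K_c(z_3)\rangle$ is the explicit product of the pairwise contractions found above, so \eqref{EEE} becomes a single rational-function identity in $z_1,z_2,z_3$; checking it is the most technical step, and I expect it to reduce to the same cubic identity used in Section \ref{sec comodule} to verify that $\Delta$ preserves \eqref{EEE}.
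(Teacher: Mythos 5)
Your proposal is correct and is exactly the "direct computation" that the paper's proof consists of (the paper gives no further details): the identifications $\kappa_r/(1+s_c^r)=(1-s_c^r)\prod_{i\neq c}(1-q_i^r)$, the resulting contraction of $\tilde K_c(z)\tilde K_c(w)$, the collapse $:\tilde K_c(z)\tilde K_c(s_cz):\,=K(z)$ on the pole locus $w=s_c^{\pm1}z$, and the residue match fixing $k_c^\ssB$ all check out. The only step you leave as an outline is the Serre relation, but your reduction of it to a rational-function identity for the one-boson correlation functions (equivalently, the zero conditions of Lemma \ref{zero-cond} plus the coefficient of $\delta(C^2z_1/z_3)$) is the right way to finish it.
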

\begin{proof}
The proposition is proved by a direct computation.
\end{proof}

We also have three $\cK$ modules of type $\ssCD$.
\begin{prop}\label{prop CD boundary}
For $c\in\{1,2,3\}$, the map $\cK\to \cH^\ssCD_c$ sending
\begin{align*}
&E(z)\mapsto 0\,,\quad K^{\pm}(z)\mapsto K^{\pm}(z)\,,
\quad C\mapsto s_c^{-1}\,,
\end{align*} 
endows $\F^\ssCD_c$ with a structure of a $\cK$ module of level $s_c^{-1}$.
\end{prop}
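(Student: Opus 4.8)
The plan is to verify that the images of the generators satisfy every defining relation \eqref{EE}--\eqref{EEE} of $\cK$ as operators on $\F^\ssCD_c$. Since $\F^\ssCD_c$ is admissible (graded with degrees bounded above and $C$ acting by the scalar $s_c^{-1}$), the completion is harmless and $K(z)$ is well defined, so the relations can be read as literal operator identities. The key simplification is that the assignment sends $E(z)\mapsto 0$, which collapses nearly everything: the left-hand side of \eqref{EE} is then zero, relation \eqref{KE} becomes $0=0$, and in the Serre relation \eqref{EEE} the iterated brackets on the left and the factor $E(z_2)$ on the right both vanish. Hence \eqref{KE} and \eqref{EEE} hold trivially, and for \eqref{EE} it only remains to show that its right-hand side vanishes as well.

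For the two purely Heisenberg relations \eqref{KK1} and \eqref{KK2}, I would invoke the remark following \eqref{EEE}, namely that they are equivalent to $[H_r,H_s]=-\delta_{r+s,0}\kappa_r(1+C^{2r})/r$. Substituting $C=s_c^{-1}$ gives $C^{2r}=s_c^{-2r}$, which is exactly the defining commutation relation of $\cH^\ssCD_c=\cH_{s_c^{-1}}$. Thus the vertex operators $K^\pm(z)$ constructed from this Heisenberg algebra satisfy \eqref{KK1}--\eqref{KK2} automatically, with no further calculation.

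The one substantive step is the vanishing of the right-hand side of \eqref{EE}. Using the formal-delta substitution $g(z,w)\,\delta(C^2z/w)=g(z,C^2z)\,\delta(C^2z/w)$, together with the analogous identity for the second summand, it suffices to observe that $C^2=s_c^{-2}=q_c^{-1}$, so that $g(z,C^2z)=g(z,q_c^{-1}z)=z^3\prod_{i=1}^3(1-q_iq_c^{-1})$ carries the factor $1-q_cq_c^{-1}=0$. Both delta-supported terms therefore drop out, and \eqref{EE} reduces to $0=0$. There is no real obstacle here; the only point requiring care is that the level is tuned precisely so that $g(z,C^2z)$ acquires this vanishing factor, which is exactly what makes the trivial representation $E(z)\mapsto 0$ consistent, together with the routine check that $\F^\ssCD_c$ is admissible so that all the relations are well-posed operator identities.
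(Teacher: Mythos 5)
Your proof is correct and is essentially the computation the paper has in mind — the paper's own proof is just the statement ``proved by a direct computation,'' and the only substantive content is exactly the point you isolate: with $E(z)\mapsto 0$ every relation except \eqref{KK1}, \eqref{KK2} and the right-hand side of \eqref{EE} trivializes, the Heisenberg relations are the defining relations of $\cH_{s_c^{-1}}$, and the delta-supported terms in \eqref{EE} vanish because $g(z,C^2z)=z^3\prod_{i}(1-q_iq_c^{-1})$ contains the factor $1-q_cq_c^{-1}=0$.
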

\begin{proof}
The proposition is proved by a direct computation.
\end{proof}

\bigskip

Using the comodule map $\Delta$ we obtain the following corollary.
\begin{cor}\label{include}
There exists an algebra homomorphism 
$\cK\to \E\tilde \otimes \cH^\ssB_c$ such that
\begin{align*}
&E(z)\mapsto  e(s^{-1/2}_cz)\otimes K^+(z)+k_c^\ssB  \otimes \tilde{K}_c(z)
+\ft(s_c^{1/2}z)\otimes K^-(z)\,,
\\
&K^+(z)\mapsto \psi^+(C^{-1}_1s_c^{-1/2}z)\otimes K^+(z)
\,,
\quad K^-(z)\mapsto  {\psi^-(s_c^{1/2}z)}^{-1}\otimes K^-(z)\,,\\
&C\mapsto s_c^{1/2} C_1\,.
\end{align*} 
Similarly, there exists an algebra homomorphism $\cK\to \E\tilde\otimes \cH^\ssCD_c$ such that
\begin{align*}
&E(z)\mapsto e(s_c z) \otimes K^+(z)+\ft(s_c^{-1}z) \otimes K^-(z)\,,
\\
&K^+(z)\mapsto  {\psi^+(C^{-1}_1s_c z)}\otimes K^+(z)
\,,
\quad K^-(z)\mapsto { \psi^-(s_c^{-1}z)}^{-1}\otimes K^-(z)\,,\\
&C\mapsto s_c^{-1} C_1\,.
\end{align*}\qed
\end{cor}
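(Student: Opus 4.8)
The plan is to obtain Corollary \ref{include} as a direct specialization of the comodule map $\Delta:\cK\to\E\tilde\otimes\cK$ from \eqref{comodule formula}, composed with the boundary representations of Propositions \ref{prop B boundary} and \ref{prop CD boundary} applied to the second tensor factor. Concretely, both $\Delta$ and the boundary representation are algebra homomorphisms, so their composition $(\id\otimes\rho)\circ\Delta$ is automatically an algebra homomorphism $\cK\to\E\tilde\otimes\cH^\ssB_c$ (resp. $\cK\to\E\tilde\otimes\cH^\ssCD_c$), where $\rho$ denotes the relevant boundary map. Thus there is nothing to check about the relations \eqref{EE}--\eqref{EEE}; everything reduces to evaluating the right-hand side of \eqref{comodule formula} under $\id\otimes\rho$ and simplifying the central-element substitutions.

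First I would handle the type $\ssB$ case. Applying $\id\otimes\rho$ with $\rho$ as in Proposition \ref{prop B boundary}, the central element $C$ in the second factor becomes the scalar $s_c^{1/2}$, so $C_2=1\otimes C$ acts as $s_c^{1/2}$. In $\Delta E(z)=e(C_2^{-1}z)\otimes K^+(z)+1\otimes E(z)+\ft(C_2 z)\otimes K^-(z)$ this replaces $C_2^{-1}z$ by $s_c^{-1/2}z$ and $C_2 z$ by $s_c^{1/2}z$; in the middle term $\rho(E(z))=k_c^\ssB\tilde K_c(z)$, which recovers the stated formula for $E(z)$. For the Cartan currents, $\Delta K^+(z)=\psi^+(C_1^{-1}C_2^{-1}z)\otimes K^+(z)$ gives $\psi^+(C_1^{-1}s_c^{-1/2}z)\otimes K^+(z)$, and $\Delta K^-(z)=\psi^-(C_2z)^{-1}\otimes K^-(z)$ gives ${\psi^-(s_c^{1/2}z)}^{-1}\otimes K^-(z)$. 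Finally $\Delta C=C\otimes C$ becomes $C_1\cdot s_c^{1/2}=s_c^{1/2}C_1$, matching the claimed level.

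Next I would treat the type $\ssCD$ case identically, now with $\rho$ as in Proposition \ref{prop CD boundary}, so that $C$ acts as $s_c^{-1}$ on the second factor, i.e. $C_2=s_c^{-1}$, and crucially $\rho(E(z))=0$. The middle term $1\otimes E(z)$ therefore drops out entirely, leaving only $e(s_c z)\otimes K^+(z)+\ft(s_c^{-1}z)\otimes K^-(z)$ for $E(z)$, since $C_2^{-1}z=s_c z$ and $C_2 z=s_c^{-1}z$. The Cartan currents specialize to $\psi^+(C_1^{-1}s_c z)\otimes K^+(z)$ and ${\psi^-(s_c^{-1}z)}^{-1}\otimes K^-(z)$, and the level becomes $s_c^{-1}C_1$, exactly as stated.

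Since the composition of homomorphisms is a homomorphism, no obstacle of a conceptual nature arises; the only thing to be careful about is the bookkeeping of the central-element substitutions $C_2\mapsto s_c^{\pm1/2}$ or $s_c^{\mp1}$ inside the shifted arguments of $e,\ft,\psi^\pm$, together with confirming that the completed tensor product $\E\tilde\otimes\cK$ descends to $\E\tilde\otimes\cH^\ssB_c$ (resp. $\E\tilde\otimes\cH^\ssCD_c$) under $\id\otimes\rho$ — this last point is where I would be most attentive, checking that the positive-direction completion is respected so that the infinite series defining $K(z)$ remain well defined on admissible modules. Once these substitutions are made explicit, the two displayed formulas follow immediately.
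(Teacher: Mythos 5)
Your proposal is correct and is exactly the paper's (implicit) argument: the corollary is obtained by composing the comodule map $\Delta$ of \eqref{comodule formula} with the boundary representations of Propositions \ref{prop B boundary} and \ref{prop CD boundary} in the second tensor factor, and your bookkeeping of the substitutions $C_2\mapsto s_c^{1/2}$ (type $\ssB$) and $C_2\mapsto s_c^{-1}$ (type $\ssCD$) reproduces the stated formulas. Nothing further is needed.
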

It seems likely that the homomorphisms in Corollary \ref{include} are injective. 
If so, it would give us an inclusion of algebra $\cK$ into algebra $\E$ extended by an extra Heisenberg algebra. 
\medskip

Note that twisting the boundary modules by automorphism \eqref{iota} we obtain a new set of boundary modules. Twisting by automorphisms \eqref{tau} leads to isomorphic modules.

\subsection{Root currents $A_i(z)$ of types $\ssB,\ssC,\ssD$}\label{sec B roots}
Fix a sequence of colors $c_1,\ldots,c_{\ell+1}\in\{1,2,3\}$, and consider 
a $\cK$ module defined as 
a tensor product of $\ell$ Fock modules of $\E$ with a boundary Fock module $\F_c^\ssB$ or $\F_c^\ssCD$:
\begin{align}
&\F_{c_1}(u_1)\otimes\cdots\otimes \F_{c_{\ell}}(u_{\ell})\otimes \F_{c_{\ell+1}}^\ssB\,, 
\label{B-mod} \\ 
&\F_{c_1}(u_1)\otimes\cdots\otimes \F_{c_{\ell}}(u_{\ell})\otimes \F_{c_{\ell+1}}^\ssCD\,.
\label{C-mod}
\end{align}
We say that the tensor product has 
\begin{align*}
&\text{type $\ssB$}\qquad \text{for \eqref{B-mod}}\,,\\
&\text{type $\ssC$}\qquad \text{for \eqref{C-mod} with $c_\ell\neq c_{\ell+1}$}\,,\\
&\text{type $\ssD$}\qquad \text{for \eqref{C-mod} with $c_\ell= c_{\ell+1}$}\,.
\end{align*}

Let $C_{\ell+1}$ denote the level of the boundary Fock module:
\begin{align}
&C_{\ell+1}=\begin{cases}
s_{c_{\ell+1}}^{1/2} & \text{for type $\ssB$},\\
s_{c_{\ell+1}}^{-1} &  \text{for type $\ssC,\ssD$}. \\
\end{cases}
\label{lev-bdry}
\end{align}
The total level is 
\begin{align}
C=C_{\ell+1}  
\prod_{i=1}^\ell s_{c_i}.\label{level-BCD}
\end{align}
By the comodule formula \eqref{comodule formula} and the coproduct formulas 
\eqref{e coproduct}, \eqref{psi+ coproduct}, current $E(z)$ acts as a sum of vertex operators in 
$\ell+1$ bosons of the form
\begin{align*}
&E(z)= \sum_{i=1}^\ell b_{c_i}\Lambda_i(z) + k_{c_{\ell+1}}^\ssB\Lambda_0(z) 
+\sum_{i=1}^\ell b_{c_i}\Lambda_{\bar i}(z)\qquad &&\text{for type $\ssB$}\,,
\\
&E(z)= \sum_{i=1}^\ell b_{c_i}\Lambda_i(z) +\sum_{i=1}^\ell b_{c_i}\Lambda_{\bar i}(z)
\qquad &&\text{for type $\ssC,\ssD$}\,.
\end{align*}
In these formulas,
\begin{align*}
&\Lambda_i(z)=1\otimes\cdots\otimes 1
\otimes \overset{\underset{\smile}{i}}{V_{c_i}(a_{i} z;u_i)}
\otimes \psi^+(s^{-1}_{c_{i+1}}a_{i+1} z)\otimes 
\cdots \otimes \psi^+(s^{-1}_{\ell}a_{\ell} z)\otimes K^+(z)\, \qquad (i=1,\dots,\ell), 
\\
&\La_0(z)=1\otimes\cdots\otimes 1\otimes \tilde K_{c_{\ell+1}}(z)\,, 
\\
&\Lambda_{\bar i}(z)=1\otimes\cdots\otimes 1
\otimes \overset{\underset{\smile}{i}}{V_{c_i}^{-1}(a_{i}^{-1} z;u_i)}
\otimes \psi^-(a_{i+1}^{-1}z)^{-1}\otimes 
\cdots \otimes \psi^-(a_{\ell}^{-1}z)^{-1}\otimes K^-(z)\, \qquad (i=1,\dots,\ell),
\end{align*}
where $a_i$'s are given by 
\begin{align}\label{a}
a_i=C_{\ell+1}^{-1}  
\prod_{j=i+1}^\ell s_{c_j}^{-1}.
\end{align}

\bigskip

We have the following contractions:
\begin{align}
&\mc C(\La_i(z),\La_j(w))=
\begin{cases}
-\kappa_1& (i\prec j\,,i\neq \bar j)\\
0 & (i\succ j\,,i\neq \bar j)\\
\end{cases}\,,
\notag\\
&\mc C(\La_i(z),\La_i(w))=
\begin{cases}
-\frac{\kappa_1}{1-q_{c_i}}& (i\neq 0)\\
-\frac{\kappa_1}{1+s_{c_{\ell+1}}}& (i=0)\\
\end{cases}\,,
\label{LaContractionBCD}\\
&\mc C(\La_i(z),\La_{\bar i}(w))=-\kappa_1+\frac{\kappa_1 q_{c_i}}{1-q_{c_i}}a^{-2}_{i}\quad (1\le i\le \ell),\notag\\
&\mc C(\La_{\bar i}(z),\La_{i}(w))=\frac{\kappa_1}{1-q_{c_i}}a^{2}_{i}\quad (1\le i\le \ell).\notag
\end{align}
Here the indices are ordered as $1\prec\cdots\prec \ell\prec 0\prec\bar{\ell}\prec\cdots\prec\bar{1}$, and we set
$c_{\bar i}=c_i$. 

As before, to each neighboring pair of Fock spaces we associate a current $A_i(z)$. Namely, for $i=1,\dots,\ell-1$, 
we define $A_i(z)$ similarly as in \eqref{A},
\begin{align}
A_i(z)=\,\,:\frac{\Lambda_i(a_i^{-1}z)}{\Lambda_{i+1}(a_i^{-1}z)}:\,\,
=\,\,:\frac{\Lambda_{\overline{i+1}}(a_i z)}{\Lambda_{\bar{i}}(a_i z)}:\,.
\label{A-i}
\end{align}
The second equality is due to the identity \eqref{id-VO}.

In addition we define a current $A_\ell(z)$ for each type as follows.
\begin{align}
A_\ell (z)
&=\,\,:\frac{\Lambda_\ell(a^{-1}_\ell z)}{\Lambda_{0}(a^{-1}_\ell z)}:
\,\,=\,\,:\frac{\Lambda_0(a_\ell z)}{\Lambda_{\bar\ell}(a_\ell z)}:
&&\text{for type $\ssB$},
\label{B-ell}
\\
A_\ell(z)  
&=\,\,:\frac{\Lambda_\ell(z)}{\Lambda_{\bar \ell}(z)}:
&&\text{for type $\ssC$},
\label{C-ell}\\
A_\ell (z)
&=\,\, :\frac{\Lambda_{\ell-1}(z)}{\Lambda_{\bar \ell}(z)}:
\,\,= \,\,:\frac{\Lambda_{\ell}(z)}{\Lambda_{\overline{\ell-1}}(z)}:
&&\text{for type $\ssD$}.
\label{D-ell}
\end{align}

We study the contractions of the root currents $A_i(z)$. Denote as before $B_{i,j}=\mc C(A_i(z),A_j(w))$. 

The contractions of $B_{i,j}$ with $i,j\neq \ell$ are clearly the same as in type $\ssA$ 
and are given in \eqref{A-contr}. The new feature is the contractions $B_{i,\ell}, B_{\ell,i}$. 

First, we have
\begin{align*}
B_{i,\ell}=B_{\ell,i}=0, \qquad (i<\ell-2).
\end{align*}
Then for type $\ssB$ we have
\begin{align}\label{B cont}
&B_{\ell-2,\ell}=B_{\ell,\ell-2}=0\,,\qquad
B_{\ell-1,\ell}=B_{\ell,\ell-1}=\frac{t_1t_2t_3}{t_{c_{\ell}}}\,,\\
&B_{\ell,\ell}=
-\frac{t_1t_2t_3}{t_{c_{\ell}}t_{c_{\ell+1}}}(s^{1/2}_{c_{\ell+1}}-s_{c_{\ell+1}}^{-1/2})
(s_{c_\ell}s^{1/2}_{c_{\ell+1}}+s_{c_\ell}^{-1}s^{-1/2}_{c_{\ell+1}})\,.
\end{align}
For type $\ssC$ we have
\begin{align}\label{C cont}
&B_{\ell-2,\ell}=B_{\ell,\ell-2}=0\,,\qquad 
B_{\ell-1,\ell}=B_{\ell,\ell-1}=\frac{t_1t_2t_3}{t_{c_{\ell}}}(s_{c_{\ell+1}}+s_{c_{\ell+1}}^{-1})\,,\\
&B_{\ell,\ell}=-\frac{t_1t_2t_3}{t_{c_{\ell}}}(s_{c_{\ell+1}}+s_{c_{\ell+1}}^{-1})
(s_{c_\ell}s_{c_{\ell+1}}^{-1}+s_{c_\ell}^{-1}s_{c_{\ell+1}})\,.
\end{align}
Finally, for type $\ssD$ we have
\begin{align}\label{D cont}
&B_{\ell-2,\ell}=B_{\ell,\ell-2}=\frac{t_1t_2t_3}{t_{c_{\ell-1}}}\,,\qquad
B_{\ell-1,\ell}=B_{\ell,\ell-1}=
\frac{t_1t_2t_3}{t_{c_{\ell-1}}t_{c_{\ell}}}(s_{c_{\ell-1}}s^{-1}_{c_\ell}-s^{-1}_{c_{\ell-1}}s_{c_\ell})\,,
\\
&B_{\ell,\ell}=
-\frac{t_1t_2t_3}{t_{c_{\ell-1}}t_{c_{\ell}}}(s_{c_{\ell-1}}s_{c_\ell}-s^{-1}_{c_{\ell-1}}s^{-1}_{c_\ell})\,.
\end{align}
Note that, in particular,  $B_{ij}=B_{ji}$ in all cases.

We illustrate these contractions in the library of Appendix \ref{library}. Here we consider the simplest case when 
all Fock spaces are of the same color to describe the connection to Cartan matrices of types $\ssB,\ssC,\ssD$.

Let $c_i=2$, $i=1,\dots,\ell$. Then the only nonzero $B_{i,j}$ with both $i\neq \ell$ and $j\neq \ell$ are
$B_{i,i}=-t_1t_3(s_2+s_2^{-1})$ and
$B_{i,i-1}=B_{i-1,i}=t_1t_3$.

Consider now the case of type $\ssB$. We have $C^2=q_2^{\ell}s_{c_{\ell+1}}$.

Let $c_{\ell+1}=1$.
Then $B_{\ell,\ell-1}=B_{\ell-1,\ell}=t_1t_3$, and
$B_{\ell,\ell}=-t_3(s_1^{1/2}-s_1^{-1/2})(s_2s_1^{1/2}+s_2^{-1}s_1^{-1/2})$. 
The other entries involving index $\ell$ are zero.

Thus the matrix $-B t_3^{-1}(s_1^{1/2}-s_1^{-1/2})^{-1}$ 
coincides with type $\ssB$ matrix $B(q,t)$ (2.4) of \cite{FR1} under the identification 
$q=s_1^{1/2}$, $t=s_2^{-1}$. In particular, in the limit $s_1=s_2=s_3=1$ 
it recovers the symmetrized Cartan matrix of type $\ssB$.

Moreover, all the terms in $E(z)$ are obtained 
from the first one by multiplications by $A_i(z)^{-1}$ as in the natural representation of $B_\ell$ shown below
 (we do not show the arguments of currents or constants in front of vertex operators here).

\bigskip

\begin{tikzcd}
    \La_1\arrow{r}{A_1^{-1}} & \La_2 \arrow{r}{A_2^{-1}} & \cdots\arrow{r}{A_{\ell-1}^{-1}} 
&\La_{\ell}\arrow{r}{A_{\ell}^{-1}} & \La_0\arrow{r}{A_{\ell}^{-1}} 
& \La_{\bar\ell}\arrow{r}{A_{\ell-1}^{-1}} & \cdots\arrow{r}{A_{2}^{-1}} 
& \La_{\bar 2}\arrow{r}{A_{1}^{-1}} &\La_{\bar 1}
\end{tikzcd}
\bigskip

Therefore we depict this case by the following Dynkin diagram.
\begin{align}\label{dynkin B ex}
\begin{tikzpicture}[baseline=(origin.base)] 
\dynkin[root radius=.1cm, edge length=1.3cm, labels={2,2,2,2,1}]{B}{oo.ooo}
\end{tikzpicture}
\end{align}
The case $c_{\ell+1}=3$ is similar. 

However, the deformed Cartan matrix in the case of $c_{\ell+1}=2$ is essentially different; it corresponds to the case studied in \cite{BL}, see also Section 7 of \cite{FR1}. We have $B_{\ell,\ell-1}=B_{\ell-1,\ell}=t_1t_3$, and
$B_{\ell,\ell}=-t_1t_3(s_2-1+s_2^{-1})$.
In this case $-Bt_3^{-1}t_1^{-1}$ in the limit $s_1=s_2=s_3=1$ is equal to 
 symmetrized Cartan matrix of type ${\mathfrak{osp}}(1,2\ell)$. 
We associate to this case the following diagram.
\begin{align}\label{dynkin B-BL ex}
\begin{tikzpicture}[baseline=(origin.base)] 
\dynkin[root radius=.1cm, edge length=1.3cm, labels={2,2,2,2,2}]{B}{oo.oo*}
\end{tikzpicture}
\end{align}
Note that the symmetrized Cartan matrix of type ${\mathfrak{so}}(2\ell+1)$ is twice the  symmetrized Cartan matrix of type ${\mathfrak{osp}}(1,2\ell)$. Moreover, the weight diagrams of the ${\mathfrak{so}}(2\ell+1)$ modules and ${\mathfrak{osp}}(1,2\ell)$ modules are the same and the nilpotent subalgebras in both cases are the same. Therefore, for the purposes of this paper either diagram fits. However, we prefer not to use the affine node to distinguish between these two finite types which was suggested by \cite{FR1}. Instead the affine node distinguishes between affine versions of the deformed Cartan matrices, see Section \ref{sec B affine}.

\medskip

Let us now switch to the types $\ssC$ and $\ssD$. We have $C^2=q_2^{\ell}q_{c_{\ell+1}}^{-1}$.

When $c_{\ell+1}=1$, we make the choice of type $\ssC$. Then the nontrivial entries with index $\ell$ are
$B_{\ell,\ell-1}=B_{\ell-1,\ell}=t_3(s^2_1-s_1^{-2})$, and
$B_{\ell,\ell}=-t_3(s^2_1-s_1^{-2})(s_1s_2^{-1}+s_1^{-1}s_2)$.

Thus the matrix $-B t_3^{-1}t_1^{-1}$ coincides with type $\ssC$ 
matrix $B(q,t)$ of \cite{FR1} under the identification $q=s_1$, $t=s^{-1}_3$. 
In particular, in the limit $s_1=s_2=s_3=1$ it recovers the symmetrized Cartan matrix of type $\ssC$. 

Again, current $E(z)$ looks as a vector representation of type $C$.
\bigskip
$$
\begin{tikzcd}
\La_1\arrow{r}{A_1^{-1}} & \La_2 \arrow{r}{A_2^{-1}} & \cdots
\arrow{r}{A_{\ell-1}^{-1}} 
&\La_{\ell}\arrow{r}{A_{\ell}^{-1}} &\La_{\bar\ell}\arrow{r}{A_{\ell-1}^{-1}} & \cdots\arrow{r}{A_{2}^{-1}} & \La_{\bar 2}\arrow{r}{A_{1}^{-1}} &\La_{\bar 1}
\end{tikzcd}
$$
\bigskip

We associate to it the following Dynkin diagram.
\begin{align}\label{dynkin C ex}
\begin{tikzpicture}[baseline=(origin.base)] 
\dynkin[root radius=.1cm, edge length=1.3cm, labels={2,2,2,2,1}]{C}{oo.ooo}
\end{tikzpicture}
\end{align}
The case $c_{\ell+1}=3$ is similar. 

\medskip 

When $c_{\ell+1}=2$, we make a choice of type $\ssD$. Then nonzero entries involving index $\ell$ are
$B_{\ell,\ell-2}=B_{\ell-2,\ell}=t_1t_3$, and $B_{\ell,\ell}=-t_1t_3(s_2+s_2^{-1})$.

Thus the matrix $-B t_3^{-1}t_1^{-1}$ 
coincides with type $\ssD$ 
matrix $B(q,t)$ of \cite{FR1} under the identification $q=s_1$, $t=s_3^{-1}$. 
In particular, in the limit $s_1=s_2=s_3=1$ it recovers the symmetrized Cartan matrix of type $\ssD$.

Again, current $E(z)$ looks as a vector representation of type $D$.

\bigskip

$$
\begin{tikzcd}
 &&&&  \La_{\ell}\arrow{rd}{A_{\ell}^{-1}} && \\
    \La_1\arrow{r}{A_1^{-1}} & \La_2 \arrow{r}{A_2^{-1}} & \cdots
\arrow{r}{A_{\ell-2}^{-1}} 
&\La_{\ell-1}\arrow{rd}{A_{\ell}^{-1}}\arrow{ru}{A_{\ell-1}^{-1}} &{} 
& \La_{\overline{\ell-1}}\arrow{r}{A_{\ell-2}^{-1}} & \cdots\arrow{r}{A_{2}^{-1}} 
& \La_{\bar 2}\arrow{r}{A_{1}^{-1}} &\La_{\bar 1} \\
    &&&&  \La_{\bar\ell}\arrow{ru}{A_{\ell-1}^{-1}} && \\
\end{tikzcd}
$$

Thus we have the following Dynkin diagram.
\begin{align}\label{dynkin D ex}
\begin{tikzpicture}[baseline=(origin.base)] 
\dynkin[root radius=.1cm, edge length=1.3cm, labels={2,2,2,2,2}]{D}{oo.ooo}
\end{tikzpicture}
\end{align}

Thus we recover deformed $\cW$-algebras of types $\ssB,\ssC,\ssD$ described in \cite{FR1} 
in the case when all Fock spaces are of the same type. We remark that as in type $\ssA$, 
we have the diagonal Heisenberg $\Delta^{(\ell)}H_r$ commuting with $A_i(z)$, $i=1,\ldots,\ell$.
This extra boson allows us to have rational contractions between all the terms. 
\medskip

\subsection{Root current $A_0(z)$ of types $\ssB,\ssC,\ssD$}\label{sec B affine}
Define the dressed current $\Eb(z)$ depending on $\mu\in\C^\times$, $|\mu|<1$,  
\begin{align*}
\Eb(z)=E(z){K^+_\mu(z)}^{-1}\,,\quad K^+_\mu(z)=\prod_{s=0}^\infty K^+(\mu^{-s}z)\,.
\end{align*}
The Fourier coefficients of $\Eb(z)$ are elements of the algebra $\tilde \cK$, 
the algebra $\cK$ completed with respect to homogeneous grading in the positive direction.

\medskip 

Similarly to type $\ssA$, our motivation for the definition of the dressing current is twofold : 
we would like to have integrals of motion and 
we also would like to have the current $A_0(z)$ which produces a screening operator corresponding 
to the affine node. 
It turns out that the second requirement 
restricts $\mu$ to specific values, in stark contrast to type $\ssA$ where $\mu$ is arbitrary; 
see Example \ref{DynkinD4} and Theorems \ref{CD screening thm}, \ref{B screening thm} below.
There are 6 possible choices of $\mu$:  
\begin{align*}
C^2/\mu=s_{c_0}^{-1}\quad \text{or} \quad C^2/\mu=s_{c_0}^2\,, \qquad c_0\in\{1,2,3\}, 
\end{align*}
which match the values of $C^2$ in 6 boundary modules we have defined in Section \ref{sec boundary}.
We have been able to find 
integrals of motion only for the last 3 of them (types $\ssC$ and $\ssD$ below),
see Section \ref{sec B integrals}.

\medskip

We make the choice for $\mu$ and label our choices by types $\ssB, \ssC,\ssD$ as we did for the $\ell$-th node. Namely, we have the following choices for the affine node:
\begin{align*}
\text{type $\ssB$} &:\qquad \mu=C^2 s_{c_0}\,,\\
\text{type $\ssC$} &:\qquad \mu=C^2 s^{-2}_{c_0}\,,\quad c_0\neq c_1\,,\\
\text{type $\ssD$} &:\qquad \mu=C^2 s^{-2}_{c_0}\,,\quad c_0= c_1\,,
\end{align*}
$C$ being the level given by \eqref{level-BCD}.
We stress that the choice of $c_0$ is independent of other $c_i$, that is, it is independent of the choice of the $\cK$ representation. The type of the zeroth node
is not to be confused with the type of tensor product modules \eqref{B-mod}, \eqref{C-mod}.

To that we associate the affine Dynkin diagram, where the affine node has the prescribed type and the color is given the same way as for the 
$\ell$-th 
node, see \ref{sec library conventions}. 
Namely, we depict the affine node as follows.
\begin{align*}
&\ssB, \ c_0=c_{1}:
&&
\begin{tikzpicture}[baseline=(origin.base)] 
\dynkin[root radius=.1cm, edge length=1.3cm, labels={c_0}]{A}{*}
\end{tikzpicture} &&
\\
&\ssB, \ c_0\neq c_{1}: 
&&
\begin{tikzpicture}[baseline=(origin.base)] 
\dynkin[root radius=.1cm, edge length=1.3cm, labels={b}]{A}{o}
\end{tikzpicture} &\{c_0,c_1,b\}=\{1,2,3\}&
\\
&\ssC, \ c_0\neq c_{1}:
&&
\begin{tikzpicture}[baseline=(origin.base)] 
\dynkin[root radius=.1cm, edge length=1.3cm, labels={b}]{A}{o}
\end{tikzpicture} &\{c_0,c_1,b\}=\{1,2,3\}&
\\
&\ssD, \ c_0=c_2: &&
\begin{tikzpicture}[baseline=(origin.base)] 
\dynkin[root radius=.1cm, edge length=1.3cm, labels={c_0}]{A}{o}
\end{tikzpicture}  &&
\\
&\ssD, \ c_0\neq c_2:
&&
\begin{tikzpicture}[baseline=(origin.base)] 
\dynkin[root radius=.1cm, edge length=1.3cm, labels={b}]{A}{t}
\end{tikzpicture}  &\{c_0,c_2,b\}=\{1,2,3\}&
\\
\end{align*}

Current $\Eb(z)$ has the form 
\begin{align*}
&\Eb(z)= \sum_{i=1}^\ell b_{c_i}\bs \Lambda_i(z) + k_{c_{\ell+1}}^\ssB\bs \Lambda_0(z) 
+\sum_{i=1}^\ell b_{c_i}\bs \Lambda_{\bar i}(z)\qquad &&\text{for type $\ssB$}\,,
\\
&\Eb(z)= \sum_{i=1}^\ell b_{c_i}\bs\Lambda_i(z) +\sum_{i=1}^\ell b_{c_i}\bs\Lambda_{\bar i}(z)
\qquad&& \text{for types $\ssC$, $\ssD$}\,,
\end{align*}
where $\bs \Lambda_i(z)=\Lambda_i(z)\Delta^{(\ell)}{K^+_\mu(z)}^{-1}$.
For $\bs\Lambda_i(z)$ the contraction rule \eqref{cont bs La} applies. 

Define the current $A_0(z)$ of each type as follows. 
\begin{align}
&\quad :A_0 (s_{c_0}^{-1/2}z)A_0 (s_{c_0}^{1/2} z):
\,\,=\,\,:\frac{\bs \Lambda_{\bar 1}(\mu^{-1/2} z)}{\bs \Lambda_1(\mu^{1/2} z)}:\qquad
&&\text{for type $\ssB$},
\label{B-0}\\
&\quad A_0 (z)  
\,\,=\,\,:\frac{\bs \Lambda_{\bar 1}(\mu^{-1/2} z)}{\bs \Lambda_1(\mu^{1/2} z)}:\qquad
&&\text{for type $\ssC$}\,,
\label{C-0}\\
&\quad A_0 (z)= \,\,
:\frac{\bs \Lambda_{\bar 2}(\mu^{-1/2} z)}{\bs \Lambda_1(\mu^{1/2} z)}:
\,\,=\,\,:\frac{\bs \Lambda_{\bar 1}(\mu^{-1/2} z)}{\bs \Lambda_2(\mu^{1/2}  z)}:\qquad
&&\text{for type $\ssD$ }
\,.\label{D-0}
\end{align}
These definitions are to be compared with \eqref{B-ell}--\eqref{D-ell}.
Note that, for type $\ssB$, \eqref{B-ell} implies 
\begin{align*}
:A_\ell(s^{-1/2}_{c_{\ell+1}}z)A_\ell(s^{1/2}_{c_{\ell+1}}z):
\,\,=\,\,:\frac{\Lambda_\ell(z)}{\Lambda_{\bar\ell}(z)}:\,.
\end{align*}

The corresponding contractions $B_{i,j}$ are given by 
the same rule as \eqref{B cont}--\eqref{D cont} 
if we interchange $A_i(z)$ with $A_{\ell-i}(z)$ and $c_j$ with $c_{\ell+1-j}$. 

Namely we have 
\begin{align}
&B_{0,1}=B_{1,0}=\frac{t_1t_2t_3}{t_{c_1}}\,,\quad
B_{0,0}=
-\frac{t_1t_2t_3}{t_{c_0}t_{c_1}}(s^{1/2}_{c_0}-s_{c_0}^{-1/2})
(s^{1/2}_{c_0}s_{c_1}+s^{-1/2}_{c_0}s_{c_1}^{-1})
\quad \text{for type $\ssB$}\,;
\label{B cont0}\\
&B_{0,1}=B_{1,0}=\frac{t_1t_2t_3}{t_{c_1}}(s_{c_0}+s_{c_0}^{-1})\,,\quad
B_{0,0}=-\frac{t_1t_2t_3}{t_{c_1}}(s_{c_0}+s_{c_0}^{-1})(s_{c_0}s_{c_1}^{-1}+s_{c_0}^{-1}s_{c_1})
\quad \text{for type $\ssC$}\,;
\label{C cont0}\\
&B_{0,2}=B_{2,0}=\frac{t_1t_2t_3}{t_{c_2}}\,,\quad
B_{0,1}=B_{1,0}=\frac{t_1t_2t_3}{t_{c_1}t_{c_2}}(s^{-1}_{c_1}s_{c_2}-s_{c_1}s^{-1}_{c_2})\,,
\label{D cont0}\\
&B_{0,0}=-\frac{t_1t_2t_3}{t_{c_1}t_{c_2}}(s_{c_1}s_{c_2}-s^{-1}_{c_1}s^{-1}_{c_2})
\quad \text{for type $\ssD$}\,.
\notag
\end{align}
For $\ell>3$, all other $B_{i,j}$ involving index $0$ are zero. The full list of $B_{i,j}$ for small values of $\ell$ is given in Appendix \ref{library}.

\bigskip

We think of the matrix $\hat B=(B_{i,j})_{i,j=0}^\ell$ as the affinization of matrix $B$. In case  all colors except $0$ and $\ell$ are equal, e.g. $c_1=\cdots=c_{\ell}=2$, 
the corresponding Dynkin diagrams are those of non-exceptional affine type, where the colors $c_0$ and $c_\ell$ determine the ends of the diagram,  see
Table \ref{CartanTable} below. For comparison we include type $\ssA$ in the table.

\bigskip

\begin{center}
\renewcommand{\arraystretch}{1.7}
\begin{table}[H]
\begin{tabular}{|c|c|c|c|c|c|c|c|}
\hline
type & Fock space &$C^2$ & $C^2/\mu$ & $\det \hat C$\\
\hline
$\ssA^{(1)}_\ell$ &$ \F_2^{\otimes\ell}\otimes \F_2$
&$q_2^{\ell+1}$ &\text{arbitrary} & $(1-\mu)(s_2^{-\ell-1}-\mu^{-1}s_2^{\ell+1})$ \\
\hline
$\ssB^{(1)}_\ell$ &$ \F_2^{\otimes\ell}\otimes \F^\ssB_3$
&$q_2^{\ell}q_3^{1/2}$ & $q_2$  & $(s_2^2-s_2^{-2})(s_2^{\ell-1}s_3^{1/2}-s_2^{-\ell+1}s_3^{-1/2})$\\
\hline
$\ssC^{(1)}_\ell$ &$\F_2^{\otimes\ell}\otimes \F^\ssCD_3$
&$q_2^{\ell}q_3^{-1}$ & $q_3$ &  $(s_2-s_2^{-1})(s_2^{\ell}s_3^{-2}-s_2^{-\ell}s_3^{2})$\\
\hline
$\ssD^{(1)}_\ell$ &$ \F_2^{\otimes\ell}\otimes \F^\ssCD_2$
&$q_2^{\ell-1}$ & $q_2$ &  $(s_2+s_2^{-1})(s_2^2-s_2^{-2})(s_2^{\ell-2}-s_2^{-\ell+2})$\\
\hline
$\ssA^{(2)}_{2\ell}$ &$\F_2^{\otimes\ell}\otimes \F^\ssB_3$
&$q_2^{\ell}q_3^{1/2}$ & $q_3$ & $(s_2-s_2^{-1})(s_2^{\ell}s_3^{-1/2}-s_2^{-\ell}s_3^{1/2})$ \\
\hline
$\ssA^{(2)}_{2\ell-1}$&$\F_2^{\otimes\ell}\otimes \F^\ssCD_3$
&$q_2^{\ell}q_3^{-1}$ & $q_2$ & $(s_2^2-s_2^{-2})(s_2^{\ell-1}s_3^{-1}-s_2^{-\ell+1}s_3)$\\
\hline
$\ssD^{(2)}_{\ell+1}$  &$ \F_2^{\otimes\ell}\otimes \F^\ssB_3$
&$q_2^{\ell}q_3^{1/2}$ &$q_3^{-1/2}$ & $(s_2-s_2^{-1})(s_2^\ell s_3-s_2^{-\ell}s_3^{-1})$ \\
\hline
\end{tabular}
\caption{\label{CartanTable}}
\end{table}
\renewcommand{\arraystretch}{1}
\end{center}
\medskip

We draw the corresponding Dynkin diagrams, where labels are as in the finite case and the double circle denotes the affine node which corresponds to the dressing rather than a pair of modules.

\newpage 

\hfill \break

\noindent{ $\ssA^{(1)}_\ell$}
\vspace{-30pt}
\begin{center}
\begin{tikzpicture}
\dynkin[root radius=.1cm, edge length=1.3cm, affine mark=O,labels={{2},2,2,2,2}, extended]{A}{oo.oo}
\end{tikzpicture}
\end{center}

\vspace{50pt}

\noindent{ $\ssB^{(1)}_\ell$}
\vspace{-50pt}
\begin{center}
\begin{tikzpicture}
\dynkin[root radius=.1cm, edge length=1.3cm, labels={2,2,2,2,2,3},affine mark=O, extended]{B}{ooo.oo}
\end{tikzpicture}
\end{center}

\vspace{20pt}

\noindent{$\ssC^{(1)}_\ell$}
\vspace{-20pt}
\begin{center}
\begin{tikzpicture}
\dynkin[root radius=.1cm, edge length=1.3cm, affine mark=O, labels={3,2,2,2,3},extended]{C}{oo.oo}
\end{tikzpicture}
\end{center}

\vspace{50pt}

\noindent{$\ssD^{(1)}_\ell$}
\vspace{-50pt}
\begin{center}
\begin{tikzpicture}
\dynkin[root radius=.1cm, edge length=1.3cm, affine mark=O, labels={2,2,2,2,2,2,2},extended]{D}{ooo.ooo}
\end{tikzpicture}
\end{center}

\vspace{20pt}

\noindent{$\ssA^{(2)}_{2\ell}$}
\vspace{-20pt}
\begin{center}
\begin{tikzpicture}
\dynkin[root radius=.1cm, backwards, edge length=1.3cm, affine mark=o, labels={3,2,2,2,2,2,3}]{A}[2]{oo.oooO}
\end{tikzpicture}
\end{center}

\vspace{50pt}

\noindent{$\ssA^{(2)}_{2\ell-1}$}
\vspace{-50pt}
\begin{center}
\begin{tikzpicture}
\dynkin[root radius=.1cm,reverse arrows, edge length=1.3cm, labels={2,2,2,2,2,3},affine mark=O, extended]{B}{ooo.oo}
\end{tikzpicture}
\end{center}

\vspace{30pt}

\noindent{$\ssD^{(2)}_{\ell+1}$}

\begin{center}
\vspace{-20pt}
\begin{tikzpicture}
\dynkin[root radius=.1cm, edge length=1.3cm, affine mark=O,  ordering=Kac,labels={3,2,2,2,2,3}]{D}[2]{oo.ooo}
\end{tikzpicture}
\end{center}
\bigskip

\subsection{The $qq$-characters.}\label{sec qq B}
As in type $\ssA$ we write $\hat B=\hat D\hat C$, choosing $\hat D=\mathrm{diag}(d_0,\ldots,d_\ell)$ in such a way that 
${C}_{i,i}$ becomes one of the following, see Appendix \ref{library}:
\begin{align*}
s_c+s_c^{-1}\,,\quad s_bs_c^{-1}+ s_b^{-1}s_c\,,\quad  s_bs_c^{1/2}+s_b^{1/2}s_c\,,\quad 
s_c^{3/2}+s_c^{-3/2}\,,\qquad s_c-s_c^{-1}\,.
\end{align*}
We then introduce the currents $Y_i(z)$ following the rule \eqref{defY}. Then the current $\bs E(z)$ follows the $qq$-character.

We give the following example illustrating various phenomena.

\begin{example}\label{DynkinD4}
Consider the following affine Dynkin diagram.

\begin{center}
\begin{tikzpicture}
\dynkin[root radius=.1cm, edge length=1.3cm, labels={2,2,2,1},affine mark=O, extended]{B}{ooo}
\end{tikzpicture}
\end{center}
We label the nodes in the standard way: double circled affine node is the zeroth node, the middle point is node $2$ and the shorter node is node $3$.

This diagram corresponds to the $\F_2\otimes \F_2\otimes \F_2\otimes \F_1^{\ssB}$ of level $C=s_2^3s_1^{1/2}$ with the dressing parameter $\mu=C^2s_2^{-2}=s_1s_2^4$.

We have the following deformed Cartan matrix (see Appendix \ref{library}).
$$
\hat C=\begin{pmatrix}
s_2+s_2^{-1} & 0 & -1 & 0 \\
0 & s_2+s_2^{-1} & -1 & 0 \\
-1 & -1 & s_2+s_2^{-1} & -1 \\
0 & 0& -s_1^{1/2}-s_1^{-1/2} & s_1^{1/2}s_2+s_1^{-1/2}s_2^{-1}\\
\end{pmatrix}.
$$
We use notation $Y_l(s_1^{a/2}s_2^{b/2}z)=\bs l_{a,b}$, $l=0,1,2,3$. Then we have
\begin{align*}
&A_0(z)=\bs 0_{0,-2}\bs 0_{0,2}\bs 2_{0,0}^{-1}, \qquad
&&A_1(z)= \bs 1_{0,-2}\bs 1_{0,2}\bs 2_{0,0}^{-1}, \\
&A_2(z)=\bs 0_{0,0}^{-1}\bs 1_{0,0}^{-1}\bs 2_{0,-2}\bs 2_{0,2} \bs 3_{1,0}^{-1}\bs 3_{-1,0}^{-1},  
&&A_3(z)=\bs 2_{0,0}^{-1}\bs 3_{1,2}\bs 3_{-1,-2}.
\end{align*}

Then the dressed current $\bs E(z)$ takes the form (ignoring the constants and an overall shift) of the $qq$-character with 7 monomials:
$$
\chi=\bs 0_{0,4}^{-1}\bs 1_{0,0}+\bs 0_{0,4}^{-1}\bs 1_{0,4}^{-1}\bs 2_{0,2}+\bs 2_{0,6}^{-1}\bs 3_{-1,4}\bs 3_{1,4} +
\bs 3_{1,4}\bs 3_{1,8}^{-1}+ \bs 2_{2,6}\bs 3_{3,8}^{-1}\bs 3_{1,8}^{-1}+\bs 0_{2,8}\bs 1_{2,8}\bs 2_{2,10}^{-1}+\bs 0_{2,8}\bs 1^{-1}_{2,12}.
$$

Note the following features of this $qq$-character.
We follow the terminology of Section \ref{q char sec}.

Let $\C P$ be the group ring of the weight lattice generated by fundamental weights $\omega_l$, $l=1,\dots,\ell$. Let
$\rho:\mc A \to \C P$ be the ring homomorphism sending $\bs l_{a,b}\mapsto \omega_l$, $l=1,\dots,\ell$, $\bs 0_{a,b}\mapsto 1$. Then $\rho(\chi)$ is the character of vector representation of $\ssB_3$.

The initial monomial has $\bs 1_{0,0}$, the final monomial has $\bs 1_{2,12}^{-1}$. The shift $s_1s_2^6$ is $C^2$.

The ratios between neighboring monomials are $A_1(s_2z)$, $A_2(s_2^2z)$, $A_3(s_2^3z)$, $A_3(s_1s_2^3z)$, $A_2(s_1s_2^4z)$, and $A_1(s_1s_2^{5}z)$ respectively.
The first and the 6th monomial are $\bs 1$-dominant, the second and the 5th are $\bs 2$-dominant, all these ``generate" $q$-characters of 2 dimensional $\U_q\widehat{\mathfrak{sl}}_2$-modules.
The third monimial is $\bs 3$-dominant and ``generates" the $q$-character of a
3 dimensional $\U_q\widehat{\mathfrak{sl}}_2$-module.
This is the reason $\bs E(z)$ commutes with screening operators $S_i$, $i=1,\dots,\ell$, see Section \ref{sec B screenings}.

The 6ht and the 7th monomial are $\bs 0$-dominant.They ``generate" monomials which are $\mu$-shifts of the first and the second monomial:
\begin{align*}
&A_0^{-1}(s_1s_2^5)\bs 0_{2,8}\bs 1_{2,8}\bs 2_{2,10}^{-1}=\bs 0_{2,12}^{-1}\bs 1_{2,8}=\tau_\mu(\bs0_{0,4}^{-1}\bs1_{0,0}), \\
&A_0^{-1}(s_1s_2^5)\bs 0_{2,8}\bs 1_{2,12}^{-1}=\bs0_{2,12}^{-1}\bs 1_{2,12}^{-1}\bs 2_{2,10}=\tau_\mu(\bs 0_{0,4}^{-1}\bs 1_{0,4}^{-1}\bs 2_{0,2}).
\end{align*}
This is the reason for our choice of $\mu$ and why the screening $S_0$ commutes with the integral of $\bs E(z)$.

\medskip

Now we consider the same Dynkin diagram and the same deformed Cartan matrix but treat node 3 as affine:

\begin{center}
\begin{tikzpicture}
\dynkin[root radius=.1cm, edge length=1.3cm, labels={2,2,2,1},affine mark=o, extended]{B}{ooO}
\end{tikzpicture}
\end{center}
Then we consider the $\ssD_3$ current corresponding to $\F_2\otimes \F_2\otimes \F_2\otimes \F_2^{CD}$ with the dressing of type $\ssB$. The new level $C^2=s_2^4$ is different but $\mu=C^2s_1=s_2^4s_1$is the same.

Then the $\bs E(z)$ current corresponds to the following
$qq$-character with 6 summands:
\begin{align*}
\bar\chi=\bs2_{0,0} \bs 3^{-1}_{-1,2}\bs 3^{-1}_{1,2}+\bs 2_{0,4}^{-1}\bs 1_{0,2}\bs 0_{0,2}+\bs 1_{0,6}^{-1}\bs 0_{0,2}+\bs 1_{0,2} \bs 0_{0,6}^{-1}+\bs 2_{0,4} \bs 1_{0,6}^{-1} \bs 0_{0,6}^{-1}+\bs 2_{0,8}^{-1} \bs 3_{1,6} \bs 3_{-1,6}.
\end{align*}
Note also that $\ssD_3=\ssA_3$. Here we consider the deformed $\cW$ current corresponding to the second fundamental module.

The first and the fifth monomials are $2$-dominant, the second and the sixth are $2$-antidominant. Moreover the ratio of the first to the second is $A_2(s_2z)$
and the fifth to the sixth is $A_2(s_2^3z)$. Similarly, in direction 1 the ratio of the second to the third is $A_1(s_2)$ and the fourth to the fifth is also $A_1(s_2^2z)$. Finally, in the direction $0$, the ratio of the second to the fourth equals to the ratio of the third to the fifth and equal to $A_0(s_2^2z)$.
It means that $\bar \chi$ is ``closed" in the directions $0,1$, and $2$ (and that $\bar\chi$ commutes with screenings $S_0,S_1,S_2$). 

In the direction of $3$, we do not have such a property. However, one can add a monomial and consider
\begin{align}\label{bar chi}
\tilde\chi=\bar\chi+\bs 3_{1,6} \bs 3_{1,10}^{-1}.
\end{align}
With respect to the $\ssD_3$ algebra it correspond to adding a trivial 1 dimensional representation (in particular, the new current still commutes with screenings $S_0,S_1,S_2$).

On the other hand, the last monomial $\bs 2_{0,8}^{-1} \bs 3_{1,5} \bs 3_{1,7}$,  the new monomial $\bs 3_{1,6} \bs 3_{3,10}^{-1}$ and the shifted monomial $\tau_{\bar\mu}(\bs 2_{0,0} \bs 3^{-1}_{-1,2}\bs 3^{-1}_{1,2})$ together correspond to the 3-dimensional evaluation $U_q\widehat{\mathfrak{sl}}_2$ module in the third direction (in particular, the new current also commutes with the screening $S_3$, see Theorem \ref{B screening thm}).

\medskip

Finally we note that the two currents corresponding to $qq$-characters $\chi$ and $\bar\chi$ are closely related. Namely each monomial of $\chi$ is a shift of a monomial in $\tilde\chi$. Namely, if we apply $\tau_\mu$ to the last three monomials of $\chi$ we obtain $\tau_{s_2}(\tilde\chi)$.
In particular, the integrals (constant terms) of $\tilde\chi$ and $\chi$ coincide. \qed
\end{example}

\begin{remark}\label{rem B}
The phenomena described in the example is rather general. Namely, if one can choose the affine node in several different ways then the corresponding deformed $\cW$ currents are all obtained from each other by shifting some of the monomials. In particular, the integrals (constant terms) of all these currents coincide.

In this paper we study commutative families of operators which include the integral of a deformed $\cW$ current.  
Thus we expect that the families for different choices of the affine node all commute.

While it seems to be difficult to check directly, this fact would follow if the integral of the deformed $\cW$ current had simple spectrum for generic evaluation parameters $u_i$. We expect this is the case and we intend to return to this question when we study the spectrum of IM.

The family of commuting operators correspond to the choices of the affine node of type other than $B$.
Such a choice exists in all cases except when both zeroth and $\ell$-th nodes are of $\ssB$ type (e.g. in type $\ssD^{(2}_{\ell+1}$). \qed
\end{remark}

\subsection{Screenings}\label{sec B screenings}

The screening operators are defined in the same way as for type $\ssA$, see  \eqref{Spm}, \eqref{Sf}.

We call the matrix $\hat B$ stable if $B_{0,\ell}=B_{\ell,0}=0$. 
Most of the deformed affine Cartan matrices of types $\ssB, \ssC,\ssD$ are stable except for a few low rank cases, see Appendix \ref{library}.

In what follows we often assume $\hat B$ is stable to simplify the considerations. We expect that in all such cases this assumption can be dropped.

Let $\hat B$ be stable. 
For $i=1,\cdots,\ell-1$, the screenings are defined by \eqref{Spm}, \eqref{Sf}. 
The screening currents for the $\ell$-th node are given as follows.

For type $\ssB$, 
\begin{align*}
c_{\ell}=c_{\ell+1}:&\quad
A_\ell(z)=s^2_{c_\ell}s_c:\frac{S_{\ell}^+(s_b^{-1}z)}{S_{\ell}^+(s_b z)}:
& \Big((c_\ell,b,c)=cycl(1,2,3)\Big),
\\
c_{\ell}\neq c_{\ell+1}:&\quad
A_\ell(z)=s^2_{c_\ell}s_{c_{\ell+1}}:\frac{S^+_{\ell}(s_b^{-1}z)}{S^+_{\ell}(s_b z)}:\,\, =s^2_{c_\ell}s_{c_{\ell+1}}:\frac{S^-_{\ell}(s_{c_{\ell+1}}^{-1/2}z)}{S^-_{\ell}(s^{1/2}_{c_{\ell+1}} z)}:
& \Big(\{c_{\ell},c_{\ell+1},b\}=\{1,2,3\}\Big).
\end{align*}

For type $\ssC$,
\begin{align*}
c_\ell\neq c_{\ell+1}:&\quad
A_\ell(z)=q_{c_\ell}q^{-1}_{c_{\ell+1}}:\frac{S_{\ell}^+(s_b^{-1}z)}{S_{\ell}^+(s_b z)}:
\,\,=q_{c_\ell}q^{-1}_{c_{\ell+1}}:\frac{S_{\ell}^-(s_{c_\ell+1}^{-2}z)}{S_{\ell}^-(s^2_{c_{\ell+1}} z)}:
\quad \Big(\{c_{\ell},c_{\ell+1},b\}=\{1,2,3\}\Big).
\end{align*}

For type $\ssD$, 
\begin{align*}
c_{\ell-1}=c_{\ell}=c_{\ell+1}:&\quad
A_\ell(z)=q_{c_\ell}:\frac{S_{\ell}^+(s_b^{-1}z)}{S_{\ell}^+(s_b z)}:\,\,=q_{c_\ell}:\frac{S_{\ell}^-(s_c^{-1}z)}{S_{\ell}^-(s_c z)}: &\Big( (c_\ell,b,c)=cycl(1,2,3)  \Big),\\
c_{\ell-1}\neq c_{\ell}=c_{\ell+1}:&\quad
A_\ell(z)=s^{-1}_{b}:\frac{S_{\ell}^f(s_b^{-1}z)}{S_{\ell}^f(s_b z)}: &\Big(\{c_{\ell-1},c_{\ell},b\}=\{1,2,3\}\Big).
\end{align*}

The zeroth screening currents $S_0^\pm(z)$ are defined in the same way (changing indices $\ell$, $\ell-1$, $\ell-2$ to $0,1,2$ respectively). 

Then we have the following theorem.

\begin{thm}\label{CD screening thm} Assume $\hat B$ is stable. The screening operators $S_i$ with $i\neq0$ commute with both $E(z)$ and $\Eb(z)$:
\begin{align*}
[S_i, E(z)]=[S_i, \bs E(z)]=0, \qquad i=1,\ldots,\ell\,.
\end{align*}
For the zeroth screening we have 
\begin{align*}
&[S_0,\bs E(z)]=[S_0,b_{c_1}(\bs \La_1(z)-\bs \La_1(\mu z))]\qquad &&\text{for type $\ssC$}\,,\\
&[S_0,\bs E(z)]=[S_0,b_{c_1}(\bs \La_1(z)-\bs \La_1(\mu z))+b_{c_2}(\bs \La_2(z)-\bs \La_2(\mu z))]
\qquad &&\text{for type $\ssD$}\,.
\end{align*}
\end{thm}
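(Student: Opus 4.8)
The plan is to reduce every assertion to the local cancellation mechanism of Lemma~\ref{screening lemma}, exactly as in the derivation of \eqref{ScA}--\eqref{ScA0}. First I would record the structural fact that makes this possible. Although \eqref{LaContractionBCD} shows that each $\La_i(z)$ has a constant contraction $-\kappa_1$ with \emph{every} operator lying to its right in the ordering $1\prec\cdots\prec\ell\prec0\prec\bar\ell\prec\cdots\prec\bar1$, these constant ``backgrounds'' are independent of the right index and therefore cancel in any neighboring ratio. Consequently, for $A_i(z)=\,:\La_i/\La_{i+1}:$ the only surviving, pole-producing contractions $\mc C(A_i(z),\La_j(w))$ are with the four adjacent operators $\La_i,\La_{i+1}$ and, through the identity \eqref{A-i}, their reflections $\La_{\overline{i+1}},\La_{\bar i}$. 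For each such pair the contractions are of the form \eqref{condition}, and $S_i(z)$ is the screening current produced by \eqref{screening}, \eqref{choose-wt}, with $(p_1,p_2,p_3)$ read off from the explicit presentations of $A_i(z)$ in \eqref{Spm}, \eqref{Sf}.

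With this in hand the middle nodes $i=1,\dots,\ell-1$ are immediate: I would apply Lemma~\ref{screening lemma} once to the forward pair $(b_{c_i}\La_i,\,b_{c_{i+1}}\La_{i+1})$ and once to the backward pair $(b_{c_{i+1}}\La_{\overline{i+1}},\,b_{c_i}\La_{\bar i})$, both applications using the same $S_i$ by virtue of \eqref{A-i}; all remaining terms commute with $S_i$ by the previous paragraph, giving $[S_i,E(z)]=0$. For the dressed current I would note that $\bs\La_j(z)=\La_j(z)\Delta^{(\ell)}K^+_\mu(z)^{-1}$ differs from $\La_j(z)$ by a factor common to every term, and that $A_i(z)$ — hence $S_i(z)$ — is unchanged by the dressing since it is a ratio (cf. \eqref{cont bs La}); the common, $\mu$-elliptic factor rides through the residue computation of Lemma~\ref{screening lemma} without producing new poles at the generic point, so $[S_i,\bs E(z)]=0$ as well. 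This dressing argument is uniform for all finite nodes $i=1,\dots,\ell$ treated below.

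Next I would treat the node $\ell$, splitting on the boundary type. For type $\ssC$, where $A_\ell(z)=\,:\La_\ell/\La_{\bar\ell}:$ by \eqref{C-ell}, a single application of Lemma~\ref{screening lemma} suffices; for type $\ssD$, where $A_\ell$ forks as in \eqref{D-ell}, I would apply it to the two pairs $(\La_{\ell-1},\La_{\bar\ell})$ and $(\La_\ell,\La_{\overline{\ell-1}})$. The genuinely new case is type $\ssB$, where $A_\ell$ connects the \emph{three} consecutive operators $\La_\ell,\La_0,\La_{\bar\ell}$ through the doubling relation $:A_\ell(s^{-1/2}_{c_{\ell+1}}z)A_\ell(s^{1/2}_{c_{\ell+1}}z):\,=\,:\La_\ell/\La_{\bar\ell}:$. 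Here Lemma~\ref{screening lemma} does not apply verbatim, and I expect this to be the main obstacle: one must carry out the residue computation directly, showing that $S_\ell$ has simple poles with $\La_\ell$ and $\La_{\bar\ell}$ but a two-pole (boundary) structure with $\La_0$, and that the residue from $\La_\ell$ cancels one pole of $\La_0$ while the residue from $\La_{\bar\ell}$ cancels the other, so that $[S_\ell,\La_\ell+\La_0+\La_{\bar\ell}]=0$. The two presentations $S^+_\ell$ and $S^-_\ell$ of $A_\ell$ given in the statement are precisely what is needed to track these two poles.

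Finally, for the zeroth screening in types $\ssC$ and $\ssD$ I would use stability of $\hat B$ to guarantee that $A_0$ has no contraction with the far end (node $\ell$), so that $S_0$ pairs only with the operators near the affine end. By \eqref{C-0} (resp. \eqref{D-0}) the current $A_0$ joins the last reflected term $\bs\La_{\bar1}$ (resp. $\bs\La_{\bar2},\bs\La_{\bar1}$) back to the first term $\bs\La_1$ (resp. $\bs\La_1,\bs\La_2$), but with the argument shifted by $\mu$. Applying Lemma~\ref{screening lemma} to the $\mu$-shifted pair yields $[S_0,\bs\La_{\bar1}]=-[S_0,\bs\La_1(\mu\,\cdot)]$, and likewise for the fork in type $\ssD$, whereas $\bs E(z)$ contains $\bs\La_1(z)$ unshifted; the mismatch is exactly $b_{c_1}(\bs\La_1(z)-\bs\La_1(\mu z))$ for type $\ssC$, and the corresponding two-term expression for type $\ssD$, which is the asserted value of $[S_0,\bs E(z)]$. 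This parallels \eqref{ScA0} and feeds into the commutativity with the integrals of motion in the same way as in Theorem~\ref{A comm thm}.
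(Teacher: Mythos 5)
Your proposal is correct and follows essentially the same route as the paper: reduction to Lemma \ref{screening lemma} for all pairwise nodes (including the backward pairs $\La_{\overline{i+1}},\La_{\bar i}$ and the $\mu$-shifted pairs at the affine node, which is exactly where the residual term $b_{c_1}(\bs\La_1(z)-\bs\La_1(\mu z))$ comes from), plus a direct three-term residue computation for the type $\ssB$ node $\ell$ in which the single poles of $S_\ell(z)$ against $\La_\ell$ and $\La_{\bar\ell}$ cancel the two poles against $\La_0$. The only small inaccuracy is your remark that the two presentations $S^\pm_\ell$ are "what is needed to track the two poles": a single screening current already sees both poles of $\La_0$, and when $c_\ell=c_{\ell+1}$ only $S^+_\ell$ commutes with $E(z)$ (the operator $S^-_\ell$ commutes with the current in which $k^{\ssB}_{c_{\ell+1}}$ changes sign).
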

\begin{proof} 
For $i=1,\cdots,\ell-1$ this is a type $\ssA$ computation, so it reduces to Lemma \ref{screening lemma}.

For type $\ssC$, one can check that the triples $(A_\ell(z),\Lambda_{\ell}(z),\Lambda_{\bar\ell}(z))$ and
$(A_0(z),\bs \Lambda_{\bar 1}(z),\bs \Lambda_{1}(\mu z))$
satisfy the conditions of Lemma \ref{screening lemma}.

Likewise, for type $\ssD$, one checks that the triples
$(A_\ell(z), \Lambda_{\ell-1}(z),\Lambda_{\bar\ell}(z))$,
$(A_\ell(z), \Lambda_{\ell}(z),\Lambda_{\overline{\ell-1}}(z))$,
$(A_0(z), \bs\Lambda_{\bar 2}(z),\bs\Lambda_{1}(\mu z))$, 
$(A_0(z), \bs\Lambda_{\bar 1}(z),\bs\Lambda_{2}(\mu z))$
satisfy the conditions of Lemma \ref{screening lemma}.

The remaining case is type $\ssB$ with $i=\ell$. 
The relevant contractions are 
\begin{align*}
&\mathcal{C}(A_\ell(z),\Lambda_\ell(w))=-\frac{t_1t_2t_3}{t_{c_{\ell}}}s^{-1}_{c_\ell}s^{-1/2}_{c_{\ell+1}}\,,
\quad
\mathcal{C}(\Lambda_\ell(z),A_\ell(w))=-\frac{t_1t_2t_3}{t_{c_{\ell}}}s_{c_\ell}s^{1/2}_{c_{\ell+1}}\,,
\\
&\mathcal{C}(A_\ell(z),\Lambda_0(w))=-\mathcal{C}(\Lambda_0(z),A_\ell(w))
=\frac{t_1t_2t_3}{s^{1/2}_{c_{\ell+1}}+s^{-1/2}_{c_{\ell+1}}}\,,\\
&\mathcal{C}(A_\ell(z),\Lambda_{\bar \ell}(w))=\frac{t_1t_2t_3}{t_{c_{\ell}}}s_{c_\ell}s^{1/2}_{c_{\ell+1}}\,,
\quad
\mathcal{C}(\Lambda_{\bar \ell}(z),A_\ell(w))=\frac{t_1t_2t_3}{t_{c_{\ell}}}s^{-1}_{c_\ell}s^{-1/2}_{c_{\ell+1}}\,.
\end{align*}
Suppose $c_\ell=c_{\ell+1}$. We take $c_{\ell}=c_{\ell+1}=2$
for concreteness. 
From the above contractions we compute the singular parts of the operator products, 
\begin{align*}
&S^+_\ell(z)\Lambda_\ell(w)=-\frac{t_1 s_2^{-3/2}w}{z-s_3s_2^{-1/2}w}:S^+_\ell(s_3s_2^{-1/2}w)\Lambda_\ell(w):+O(1),\\
&S^+_\ell(z)\Lambda_\ell(w)=k
\Bigl(\frac{s_3^{-1} s_2^{-1/2}w}{z-s_3^{-1}s_2^{-1/2}w}:S^+_\ell(s_3^{-1} s_2^{-1/2}w)\Lambda_0(w):
-\frac{s_3s_2^{1/2}w}{z-s_3s_2^{1/2}w}
:S^+_\ell(s_3s_2^{1/2}w)\Lambda_0(w):\Bigr)+O(1),\\
&S^+_\ell(z)\Lambda_{\bar \ell}(w)=\frac{t_1 s_2^{3/2}w}{z-s_3^{-1}s_2^{1/2}w}:S^+_\ell(s_3^{-1}s_2^{1/2}w)\Lambda_{\bar \ell}(w):+O(1),
\end{align*}
where $k=t_1 (s^{1/2}_2-s^{1/2}_2)/(s_1s_2^{1/2}-s^{-1}_1s_2^{-1/2})=t_1 b_2/k^{\ssB}_2$.
In view of the relations
\begin{align*}
&:S^+_\ell(s_3z)\Lambda_{\ell}(s_2^{1/2}z):\ =s_3^{-1}s_2\ :S^+_\ell(s_3^{-1}z)\Lambda_0(s_2^{1/2}z):\,,\\
&:S^+_\ell(s_3^{-1}z)\Lambda_{\bar \ell}(s_2^{-1/2}z):\ =s_3s_2^{-1}\ :S^+_\ell(s_3z)\Lambda_0(s_2^{-1/2}z):\,,
\end{align*}
we conclude that
\begin{align*}
[S^+_\ell, b_2\Lambda_{\ell}(w)+k^\ssB_2\Lambda_0(w)+b_2\Lambda_{\bar\ell}(w)]=0\,.
\end{align*}

Note that we can define another screening current $S_\ell^-(z)$ by interchanging $q_1\leftrightarrow q_3$ in $S_\ell^+(z)$.
Note also that $k^\ssB_2=(1+s_2)(s_3-s_1)/\kappa_1$ changes sign under the swap $q_1\leftrightarrow q_3$. So, the screening operator $S_\ell^-$ commutes with a different current (which is also a representation of
$\cK$, see \eqref{iota})
\begin{align*}
[S^-_\ell, b_2\Lambda_{\ell}(w)-k^\ssB_2\Lambda_0(w)+b_2\Lambda_{\bar\ell}(w)]=0\,.
\end{align*}
That the present case admits only one screening has been observed in \cite{FR1}, see the end of 
Section 7 thereof.

The calculation for $c_\ell\neq c_{\ell+1}$ is entirely similar, cf. \cite{FR1}, Theorem 3.
Alternatively one can write the relevant three terms as a ``fusion'' 
and reduce the calculation to  two-dimensional representation of $\mathfrak{sl}_2$, 
see the last line of the proof of Proposition \ref{Kmat} below.
\end{proof}

For type $\ssB$, the current $\Eb(z)$ does not ``close up'' under $S_0$. However, 
definition \eqref{B-0} suggests that we introduce an operator
\begin{align*}
\bs\Lambda_{\bar 0}(z)= \,\,:\bs \Lambda_{\bar 1}(z)A^{-1}_0(\mu^{1/2}s^{-1/2}_{c_0}z):\,\,
= \,\,:\bs\Lambda_{1}(\mu z)A_0(\mu^{1/2}s^{1/2}_{c_0}z):\
\end{align*}
and consider an extended current
\begin{align}\label{extended}
\tilde\Eb(z)=\Eb(z)+k^{\ssB}_{c_0}\bs\Lambda_{\bar 0}(z)\,.
\end{align}
On can think that this current corresponds to a direct sum of the vector representation and the trivial representation, see \eqref{bar chi}. 

\begin{thm}\label{B screening thm}  Assume $\hat B$ is stable.
The screening operators $S_i$ with $i\ne 0$ commute with $\tilde E(z)$:
\begin{align*}
&[S_i,\tilde\Eb(z)]=0,\qquad i=1,\ldots,\ell\,.
\end{align*}
For the zeroth screening we have
\begin{align*}
[S_0,\tilde\Eb(z)]=b_{c_1}[S_0,\bs\Lambda_1(z)-\bs\Lambda_1(\mu z)]\,.
\end{align*}
\end{thm}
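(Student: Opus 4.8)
The plan is to treat the two assertions separately: the first reduces immediately to Theorem \ref{CD screening thm}, while the whole substance of the statement lies in the zeroth screening, which I would reduce to a single three-term fusion computation of exactly the type already carried out for the $\ell$-th node in the proof of Theorem \ref{CD screening thm}. Throughout the commutators are understood when $S_0$ is well defined, as in Theorems \ref{A comm thm} and \ref{CD screening thm}.

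First I would dispose of $S_i$ with $i=1,\dots,\ell$. Since $\tilde\Eb(z)=\Eb(z)+k^\ssB_{c_0}\bs\Lambda_{\bar 0}(z)$ and Theorem \ref{CD screening thm} already gives $[S_i,\Eb(z)]=0$ for $i\neq0$ (this commutativity does not involve the dressing parameter $\mu$, hence is insensitive to the type of the affine node), it remains only to verify $[S_i,\bs\Lambda_{\bar 0}(z)]=0$. Here I would use that $\bs\Lambda_{\bar 0}(z)$ represents a one-dimensional (trivial) module of the finite algebra. Concretely, by \eqref{extended} and \eqref{B-0} one has $\bs\Lambda_{\bar 0}(z)=\,:\bs\Lambda_{\bar 1}(z)A_0^{-1}(\mu^{1/2}s_{c_0}^{-1/2}z):$; since $\hat B$ is stable the current $A_0$ couples only to node $1$, so the $Y_1$-content of the extreme term $\bs\Lambda_{\bar 1}(z)$ is exactly cancelled by that of $A_0^{-1}$ and the $Y$-monomial of $\bs\Lambda_{\bar 0}(z)$ involves $Y_0$ alone. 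As the screening current $S_i(z)$ has by construction a nontrivial contraction only with $Y_i(z)$ (see \eqref{defY} and the normalisation of the screenings in Sections \ref{sec A screenings}, \ref{sec B screenings}), the operator product $S_i(z)\bs\Lambda_{\bar 0}(w)$ is regular for every $i\neq0$, whence $[S_i,\bs\Lambda_{\bar 0}]=0$ and therefore $[S_i,\tilde\Eb(z)]=0$.

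For the zeroth screening the key observation is that, for a type $\ssB$ affine node with $\hat B$ stable, the variable $Y_0$ occurs only in the two extreme summands $\bs\Lambda_1(z)$ and $\bs\Lambda_{\bar 1}(z)$ of $\Eb(z)$, together with the added term $\bs\Lambda_{\bar 0}(z)$; $S_0$ commutes with every other summand (the middle terms and $\bs\Lambda_0$ carry no $Y_0$, the last because $B_{0,\ell}=0$). These three operators fit the pattern $A_0(z)=\,:\bs\Lambda_{\bar 1}(z)/\bs\Lambda_{\bar 0}(z):\,=\,:\bs\Lambda_{\bar 0}(z)/\bs\Lambda_1(\mu z):$ (up to the scalar shifts in \eqref{B-0}), which is the precise analogue of the $\ell$-th node pattern $A_\ell=\Lambda_\ell/\Lambda_0=\Lambda_0/\Lambda_{\bar\ell}$ of \eqref{B-ell}. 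I would therefore repeat verbatim the $\ell$-node fusion computation from the end of the proof of Theorem \ref{CD screening thm} under the substitution $(\Lambda_\ell,\Lambda_0,\Lambda_{\bar\ell})\mapsto(\bs\Lambda_{\bar 1},\bs\Lambda_{\bar 0},\bs\Lambda_1(\mu z))$ and $(c_\ell,c_{\ell+1})\mapsto(c_1,c_0)$, obtaining
\[
\big[\,S_0,\ b_{c_1}\bs\Lambda_{\bar 1}(z)+k^\ssB_{c_0}\bs\Lambda_{\bar 0}(z)+b_{c_1}\bs\Lambda_1(\mu z)\,\big]=0.
\]
Since $S_0$ commutes with all terms of $\tilde\Eb(z)$ save $\bs\Lambda_1(z),\bs\Lambda_{\bar 1}(z),\bs\Lambda_{\bar 0}(z)$, combining this identity with $[S_0,\tilde\Eb(z)]=b_{c_1}[S_0,\bs\Lambda_1(z)]+b_{c_1}[S_0,\bs\Lambda_{\bar 1}(z)]+k^\ssB_{c_0}[S_0,\bs\Lambda_{\bar 0}(z)]$ cancels the contributions of $\bs\Lambda_{\bar 1}$ and $\bs\Lambda_{\bar 0}$ and replaces $\bs\Lambda_1(\mu z)$ by $\bs\Lambda_1(z)$, leaving $[S_0,\tilde\Eb(z)]=b_{c_1}[S_0,\bs\Lambda_1(z)-\bs\Lambda_1(\mu z)]$, as claimed.

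The main obstacle is the displayed fusion identity. Unlike the $\ell$-th node, the three operators are the dressed vertex operators, whose mutual contractions are elliptic (see \eqref{cont bs La}) and which carry the $\mu$-shift relating $\bs\Lambda_1(\mu z)$ to the undressed summand $\bs\Lambda_1(z)$. What makes the calculation go through is that the dressing was arranged precisely so that $A_0(z)$, and hence $S_0(z)$, has \emph{rational} contractions with each individual $\bs\Lambda_j(w)$; the elliptic factors enter only as a common pole-free prefactor, so the singular parts of the operator products, which are the only data entering the commutator, coincide with those of the rational $\ell$-node computation. I would thus verify carefully that the normal-ordering identities of that proof (the analogues of $:S_\ell^+(s_3z)\Lambda_\ell(s_2^{1/2}z):\,=\mathrm{const}\,:S_\ell^+(s_3^{-1}z)\Lambda_0(s_2^{1/2}z):$) persist after the substitution, keeping track of the $\mu^{1/2}s_{c_0}^{\pm1/2}$ shifts built into \eqref{B-0} and using stability of $\hat B$ to exclude any spurious coupling between the affine node and the $\ell$-th node.
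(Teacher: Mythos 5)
Your proposal is correct and follows the same route as the paper, whose proof of this theorem consists only of the remark that it is ``parallel to that of Theorem \ref{CD screening thm}'': you reduce the $i\neq0$ cases to that theorem plus the observation that $\bs\Lambda_{\bar 0}(z)$ carries only the affine-node boson, and you handle $i=0$ by rerunning the three-term type-$\ssB$ fusion computation from its proof on the triple $(\bs\Lambda_{\bar 1}(z),\bs\Lambda_{\bar 0}(z),\bs\Lambda_1(\mu z))$, which is exactly the intended parallel. The details you supply (rationality of the $S_0$--$\bs\Lambda_j$ contractions despite the elliptic dressing, and the use of stability to decouple the two ends) are the right ones.
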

\begin{proof}
The proof is parallel to that of Theorem \ref{CD screening thm}.
\end{proof}
\medskip

\subsection{Integrals of motion associated with $\cK$}\label{sec B integrals}
In this subsection we show that algebra $\cK$ possesses a family of integrals of motion
when $C^2=\mu q_{c_0}$, namely if the zeroth node is of type $\ssC$ or $\ssD$.  

First, we prepare a lemma about matrix elements of products of $\Eb(z)$. 
Set
\begin{align*}
&f(x)=\frac{(1-C^2x)(1-C^{-2}x)}{(1-x)^5}
\times \prod_{s=1}^3
\frac{(q_s^{-1}x;\mu)_\infty}{(\mu q_s x;\mu)_\infty}\,.
\end{align*}
Using the defining relations, we find
\begin{align*}
&f(w/z)\Eb(z)\Eb(w)=f(z/w)\Eb(w)\Eb(z)\,.
\end{align*}
With the definition
\begin{align}
\Eb(z_1,\ldots,z_n)=\prod_{i<j}f(z_j/z_i)\times\Eb(z_1)\cdots \Eb(z_n)\,,
\label{E^n}
\end{align} 
the matrix elements of \eqref{E^n} have the form 
$p(z_1,\ldots,z_n)/\prod_{i<j}(z_i-z_j)^4$, where $p$ is a symmetric Laurent polynomial. 
Furthermore, the Serre relations entail the following zero conditions. 

\begin{lem}\label{zero-cond} We have
\begin{align} 
&\Eb(z,q_{c_1} z,q_{c_1}q_{c_2} z)=0\quad (c_1\neq c_2,\ c_1,c_2\in\{1,2,3\}),
\label{zero1}
\\
&\Eb(z, C^{\pm 2}z,C^{\pm 2}q_c^{\pm 1}z)=0\quad (c\in \{1,2,3\}),
\label{zero2}
&\\
&\Eb(C^{-2}z,z,C^{2}z)=0\,.
\label{zero3}
\end{align}
\end{lem}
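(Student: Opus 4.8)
The plan is to read every identity as an equality of matrix elements on admissible $\cK$-modules, where the products become meromorphic functions of $z_1,z_2,z_3$. First I record, as in the paragraph preceding the lemma, that the exchange relation $f(w/z)\Eb(z)\Eb(w)=f(z/w)\Eb(w)\Eb(z)$ forces the matrix elements of $\Eb(z_1,z_2,z_3)$ in \eqref{E^n} to have the form $p(z_1,z_2,z_3)/\prod_{i<j}(z_i-z_j)^4$ with $p$ a symmetric Laurent polynomial. Since each of the three specialization loci is off-diagonal for generic $q_1,q_2,q_3$, the denominator is finite and non-zero there, so every assertion reduces to the vanishing of the single polynomial $p$ at the corresponding point. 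Moreover, commuting each dressing factor $K^+_\mu(z_i)^{-1}$ to the right through the $E(z_j)$ by means of \eqref{KE} produces only $\mu$-convergent infinite products that are regular and invertible at all three specialization points; hence the content of the lemma is carried entirely by the undressed product $E(z_1)E(z_2)E(z_3)$ together with the relations \eqref{EE}, \eqref{EEE}.

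For \eqref{zero1} I would note that the wheel $(z,q_{c_1}z,q_{c_1}q_{c_2}z)$ with $c_1\neq c_2$ is disjoint, for generic parameters, from the resonance locus $z_i/z_j\in\{C^{\pm2}\}$ that supports the $\delta$-terms on the right-hand sides of \eqref{EE} and \eqref{EEE}. On a neighbourhood of this wheel both relations therefore collapse to their homogeneous parts, which have the same shape as the defining relations of $\E$. The required vanishing is then the Ding--Iohara--Miki wheel condition: using the homogeneous quadratic relation to reorder $[E(z_1),[E(z_2),E(z_3)]]$ into a rational multiple of $E(z_1)E(z_2)E(z_3)$ and symmetrizing, the homogeneous Serre relation $\Sym_{z_1,z_2,z_3}\frac{z_2}{z_3}[E(z_1),[E(z_2),E(z_3)]]=0$ translates exactly into $p(z,q_{c_1}z,q_{c_1}q_{c_2}z)=0$ for $c_1\neq c_2$.

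The identities \eqref{zero2} and \eqref{zero3} are boundary phenomena: each specialization sits on a $C^{\pm2}$ resonance, so the contact terms of \eqref{EE} and the $K^-(z_1)E(z_2)K^+(z_3)$ term of \eqref{EEE} must be retained. For \eqref{zero2}, at $(z,C^{\pm2}z,C^{\pm2}q_c^{\pm1}z)$ the pair $(z_1,z_2)$ lies at $z_2=C^{\pm2}z_1$, so I would first use the relevant $\delta$-term of \eqref{EE} to replace $\Eb(z_1)\Eb(z_2)$ near the resonance by its $K$-valued residue, then commute this $K$ through the surviving factor $\Eb(z_3)$ via \eqref{KE} and verify that the result, now evaluated at $z_3=q_c^{\pm1}z_2$, vanishes. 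For \eqref{zero3} the point $(C^{-2}z,z,C^2z)$ is a double resonance, with $z_1=C^{-2}z_2$ and $z_2=C^{-2}z_3$ while $z_3/z_1=C^{4}$; here the full symmetrized right-hand side of \eqref{EEE} is active (e.g.\ the permutation placing $z_1$ first and $z_2$ last fires $\delta(C^2z_1/z_2)$), and the vanishing is governed by the specialization of $X(z_1,z_2,z_3)$. I expect it to follow from the same rational identity used to verify the comodule property,
\begin{align*}
&\Bigl(\frac{z_1}{z_2}-\frac{z_2}{z_1}\Bigr)(g_{1,3}g_{2,3}-g_{3,1}g_{3,2})
+\Bigl(\frac{z_2}{z_3}-\frac{z_3}{z_2}\Bigr)g_{3,1}(g_{2,3}+g_{3,2})
+\Bigl(\frac{z_1}{z_3}-\frac{z_3}{z_1}\Bigr)(g_{1,3}+g_{3,1})g_{2,3}\\
&\quad=\kappa_1\Bigl(1-\frac{z_3^2}{z_1z_2}\Bigr)(z_1+z_3)(z_2+z_3)z_3\,g_{1,2}\,,
\end{align*}
where $g_{i,j}=g(z_i,z_j)$, evaluated on this locus.

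The main obstacle is the resonance bookkeeping for \eqref{zero2} and \eqref{zero3}: one must match, term by term, the $K$-valued residues produced by the pairwise relation \eqref{EE} against the $K^-(z_1)E(z_2)K^+(z_3)$ contributions on the right of \eqref{EEE}, after using \eqref{KE} to move all $K^\pm$ factors to one side. The delicate case is the double resonance \eqref{zero3}, where several $\delta$-supported terms collide and the cancellation succeeds only because the specialized $X$ reproduces precisely the combination of these residues; pinning down this matching is the technical heart of the argument, whereas \eqref{zero1} is essentially the known wheel condition.
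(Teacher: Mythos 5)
Your reduction to the undressed product $E(z_1,z_2,z_3)$ is correct and matches the paper, and your reading of \eqref{zero1} as a wheel condition extracted from the Serre relation is essentially the paper's argument. The gap is in \eqref{zero2} and \eqref{zero3}. The paper derives all three conditions uniformly: each ordering $E(z_{\pi(1)})E(z_{\pi(2)})E(z_{\pi(3)})$ in \eqref{EEE} is rewritten as $E(z_1,z_2,z_3)$ times the re-expansion, in the common domain $|z_1|\gg|z_2|\gg|z_3|$, of the rational prefactors relating the orderings; this re-expansion generates delta functions at the poles $z_j/z_i\in\{C^{\pm2},q_s^{-1}\}$. The delta-free terms cancel by a rational identity, the single-delta terms either cancel among themselves or reproduce the right-hand side of \eqref{EEE}, and each product of \emph{two} delta functions survives with a single nonzero rational coefficient multiplying a specialization of $E(z_1,z_2,z_3)$. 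Since the right-hand side of \eqref{EEE} contains only single delta functions, nothing can cancel these codimension-two terms, so each such specialization must vanish; this is exactly where \eqref{zero2} and \eqref{zero3} come from, by the same mechanism as \eqref{zero1}.

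Your proposed mechanism for \eqref{zero3} --- a cancellation governed by the specialization of $X$ --- cannot work: the terms $\delta(C^2z_1/z_3)K^-(z_1)E(z_2)K^+(z_3)$ and their permutations are distributions supported on codimension-one loci and cannot cancel the codimension-two term $\delta(C^2z_1/z_2)\delta(C^2z_2/z_3)$, whose coefficient is what forces $E(C^{-2}z,z,C^2z)=0$; note also that at $(C^{-2}z,z,C^2z)$ one has $z_3/z_1=C^4$, so the unpermuted $\delta(C^2z_1/z_3)$ does not fire there at all. The identity you quote is the one used to verify the comodule property in Section \ref{sec comodule}, not the one used in this lemma. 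Similarly, your two-step argument for \eqref{zero2} (replace $\Eb(z_1)\Eb(z_2)$ near $z_2=C^{\pm2}z_1$ by a $K$-valued residue, then commute through $\Eb(z_3)$ via \eqref{KE}) presupposes that the partial specialization $E(z_1,C^{\pm2}z_1,z_3)$ factors as a $K^-EK^+$-type expression with an explicit rational prefactor that vanishes at $z_3=q_c^{\pm1}z_2$; establishing this factorization, and exhibiting the vanishing of the prefactor, is precisely the single- versus double-delta bookkeeping that you defer, and \eqref{KE} by itself produces no zero at that point in the given operator ordering. As written, \eqref{zero2} and \eqref{zero3} are not proved.
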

\begin{proof}
We can write
\begin{align*}
\Eb(z_1,\ldots,z_n)=E(z_1,\ldots,z_n)\prod_{i=1}^n
{K^+_\mu(z_i)}^{-1}\,,  
\end{align*} 
where
\begin{align*}
&E(z_1,\ldots,z_n)=\prod_{i<j}f_0(z_j/z_i)\times E(z_1)\cdots E(z_n)\,,
\\
&f_0(x)=\frac{1-C^2x}{1-x}\frac{1-C^{-2}x}{1-x}\prod_{s=1}^3\frac{1-q_sx}{1-x}\,.
\end{align*}
Since 
${K^+_\mu(z)}^{-1}$ 
on any vector is a Laurent polynomial in $z^{-1}$, 
it is enough to prove the statements for $E(z_1,\ldots,z_n)$. 

Consider the Serre relation \eqref{EEE}.

The left hand side has the form
\begin{align*}
LHS=Sym_{z_1,z_2,z_3}\left[ \Bigl(\frac{z_2}{z_3}-\frac{z_3}{z_2}-\frac{z_1}{z_2}+\frac{z_2}{z_1}\Bigr)E(z_1)E(z_2)E(z_3)\right],
\end{align*}
while the right hand side is
\begin{align*}
RHS=Sym_{z_1,z_2,z_3}\left[
X(z_1,z_2,z_3)\kappa_1^{-1}\delta(C^2 z_1/z_3)K^-(z_1)E(z_2)K^+(z_3) \right].
\end{align*}

Let us consider the left hand side. Each term $E(z_i)E(z_j)E(z_k)$ is defined in the region $|z_i|\gg|z_j|\gg|z_k|$.
It can be rewritten as 
\begin{align*}
E(z_i)E(z_j)E(z_k)=E(z_1,z_2,z_3)\times f_0(z_j/z_i)^{-1}f_0(z_k/z_i)^{-1}f_0(z_k/z_j)^{-1}\,,
\end{align*}
where all matrix elements of $E(z_1,z_2,z_3)$ are symmetric Laurent polynomials multiplied by $\prod_{i<j}(z_i-z_j)^{-4}$.
To find the zero conditions of   $E(z_1,z_2,z_3)$, 
we bring all terms in both sides 
into expansions in the common domain $|z_1|\gg|z_2|\gg|z_3|$ with additional delta functions.
For example, the term $E(z_2)E(z_1)E(z_3)$ gives rise to five delta functions $\delta(q_cz_2/z_1)$ ($c=1,2,3$)
and $\delta(C^{\pm2}z_2/z_1)$. 

\medskip

We observe that the terms without delta functions cancel out, due to the identity of rational functions
\begin{align*}
\mathop{\mathrm{Skew}}_{z_1,z_2,z_3}
\Bigl(\frac{z_2}{z_3}-\frac{z_3}{z_2}-\frac{z_1}{z_2}+\frac{z_2}{z_1}\Bigr)
\frac{1} 
{g_{12}g_{13}g_{23}}
=0\,,
\end{align*}
where $g_{ij}=g(z_i,z_j)$.

\medskip

The terms with one delta function also cancel out. 

For example, the coefficient of $\delta(q_1z_1/z_2)$ comes from 3 terms and cancel out:
\begin{align*}
\Bigl[\bigl(\frac{z_1}{z_3}-\frac{z_3}{z_1}-\frac{z_2}{z_1}+\frac{z_1}{z_2}\bigr)\frac{1}{g_{13}g_{23}}
+\bigl(\frac{z_3}{z_1}-\frac{z_1}{z_3}-\frac{z_2}{z_3}+\frac{z_3}{z_2}\bigr)\frac{1}{g_{23}g_{31}}
+\bigl(\frac{z_2}{z_1}-\frac{z_1}{z_2}-\frac{z_3}{z_2}+\frac{z_2}{z_3}\bigr)\frac{1}{g_{31}g_{32}}
\Bigr]\Bigr|_{z_2=q_1z_1}\hspace{-3pt}=0.
\end{align*}
\medskip

Similarly, the coefficient of $\delta(C^2 z_1/z_3)$ 
comes from 3 terms. 
In the left hand side, we collect all contributions to 
$\kappa_1^{-1}\delta(C^2 z_1/z_3)K^-(z_1)E(z_2)K^+(z_3)$ and find
\begin{align*}
&-\Bigl(\frac{z_3}{z_2}-\frac{z_2}{z_3}-\frac{z_1}{z_3}+\frac{z_3}{z_1}\Bigr)\frac{g_{13}g_{12}}{g_{31}g_{21}}
-\Bigl(\frac{z_1}{z_2}-\frac{z_2}{z_1}-\frac{z_3}{z_1}+\frac{z_1}{z_3}\Bigr)\frac{g_{13}g_{23}}{g_{31}g_{32}}
+\Bigl(\frac{z_2}{z_1}-\frac{z_1}{z_2}-\frac{z_3}{z_2}+\frac{z_2}{z_3}\Bigr)\frac{g_{12}g_{13}g_{23}}{g_{21}g_{31}g_{32}}
\\
&=\kappa_1\bigl(\frac{z_2^2}{z_1z_3}-1\bigr)\frac{z_2(z_2+z_1)(z_2+z_3)}{g_{21}g_{32}}g_{13}\,,
\end{align*}
which coincides with the corresponding term in the right hand side.

\medskip

Finally, we consider terms with two delta functions. 

Consider the term {$\delta(q_{c_1} z_1/z_2)\delta(q_{c_2} z_2/z_3)$ ($c_1\neq c_2$)}. It comes only from 
the last term on the left hand side with non-zero coefficient,
\begin{align*}
\Bigl(\frac{z_2}{z_1}-\frac{z_1}{z_2}-\frac{z_3}{z_2}+\frac{z_2}{z_3}\Bigr)\frac{1}{g_{31}}
\Bigr|_{z_2=q_{c_1}z_1,z_3=q_{c_2}z_2}\neq 0,
\end{align*}
which implies the zero condition \eqref{zero1}.

Consider $\delta(q_i z_1/z_2)\delta(C^2 z_2/z_3)$. 
Again, only the last term from LHS contributes, with non-zero coefficient, which yields
$E(z,q_iz,C^2q_iz)=0\,.$

Likewise, only the last term contributes to the coefficient of $\delta(C^2 z_1/z_2)\delta(q_i z_2/z_3)$, giving the relation
$E(z,q^{-1}_iz,C^{-2}q_i^{-1}z)=0\,.$

Consider $\delta(C^2 z_1/z_2)\delta(C^2 z_2/z_3)$. 
Again only the last term contributes and we find the equality $E(C^{-2}z,z,C^{2}z)=0$. 
\end{proof}

\bigskip
\begin{rem}
Since the zero conditions are important, let us check them  directly  on representations. 
For simplicity we consider the case $c_1=\cdots=c_{\ell+1}=2$. 
The contractions between $\Lambda_i(z)$ are given as follows, 
see \eqref{LaContractionBCD}: 
\begin{align*}
&\cont{\La_i(z)}{\La_i(w)}
=
\begin{cases}
\displaystyle{\frac{1-w/z}{1-q_1w/z}\frac{1-q_2^{-1}w/z}{1-q_3w/z}}&(i=1,\ldots,\ell,\bar{\ell},\ldots,\bar{1}),\\
\displaystyle{\frac{1-w/z}{1-q_2w/z}\frac{1-q_3^{-1}w/z}{1-q_1w/z}\frac{1-q_3^{1/2}q_2w/z}{1-q_3^{1/2}w/z}
\frac{1-q_3^{-1/2}q_2^{-1}w/z}{1-q_3^{-1/2}w/z}}
&(i=0),\\
\end{cases}
\\
&\cont{\La_i(z)}{\La_j(w)}
=
\begin{cases}
\displaystyle{\frac{\bar{g}(z,w)}{g(z,w)}}& (i\prec j,\ i\neq \bar{j}), \\
1 & (i\succ j,\ i\neq \bar{j}), \\
\end{cases}
\\
&\cont{\La_i(z)}{\La_{\bar i}(w)}
=\frac{\bar{g}(z,w)}{g(z,w)}
\frac{1-q_1^{-1}q_2^{-i}C^2w/z}{1-q_2^{-i}C^2w/z}
\frac{1-q_3^{-1}q_2^{-i}C^2w/z}{1-q_2^{-i+1}C^2w/z}
\quad (i=1,\ldots,\ell),
\\
&\cont{\La_{\bar i}(z)}{\La_i(w)}
=
\frac{1-q_1q_2^{i}C^{-2}w/z}{1-q_2^{i-1}C^{-2}w/z}
\frac{1-q_3q_2^{i}C^{-2}w/z}{1-q_2^{i}C^{-2}w/z}
\quad (i=1,\ldots,\ell)\,.
\end{align*}
As an example, consider \eqref{zero2}. 
From the above we observe that 
\begin{align*}
&(1-C^{-2}z_2/z_1)\cont{\Lambda_i(z_1)}{\Lambda_j(z_2)}\Bigl|_{z_2=C^2z_1}= 0 \quad ((i,j)\neq (\bar{1},1)),
\\
&\cont{\Lambda_1(z_2)}{\Lambda_k(z_3)}\Bigl|_{z_3=q_cz_2}=0\quad (k\neq 1,\ c=1,2,3)\,,\quad
\cont{\Lambda_1(z_2)}{\Lambda_1(z_3)}\Bigl|_{z_3=q_2z_2}=0\,,
\\
&\cont{\Lambda_{\bar 1}(z_1)}{\Lambda_1(z_3)}\Bigl|_{z_3=C^2q_c z_1}=0\quad (c=1,3).
\end{align*}
Therefore the operator 
$\prod_{r<s}f_0(z_s/z_r)\cdot \Lambda_i(z_1)\Lambda_j(z_2)\Lambda_k(z_3)$
vanishes at $(z_1,z_2,z_3)=(z,C^2z,C^2q_cz)$ for all $i,j,k$, and \eqref{zero2} follows. 
 The other zero conditions can be verified similarly.  
\end{rem}

\bigskip

First, we consider the case  $C^2=\mu q_2$. 

\medskip

\begin{thm}\label{mu q_2} Assume that  $C^2=\mu q_2$. 
Then the following elements $\{\bI_n\}_{n=1}^\infty$ 
are mutually commutative,
\begin{align}
\bI_n=
\int\!\!\cdots \!\!\int
\Eb(z_1)\cdots\Eb(z_n)\cdot
\prod_{j<k}\kfun_2(z_k/z_j)
\,\prod_{j=1}^n\frac{dz_j}{2\pi i z_j}\,.
\label{bIn}
\end{align}
Here the integral is taken on the unit circle $|z_j|=1$, $j=1,\ldots,n$ in the region $|q_1|,|q_3|>1$ and extended by analytic continuation everywhere else.
\end{thm}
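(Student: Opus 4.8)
The plan is to mirror the proof of Theorem \ref{IMtypeA} in type $\ssA$, replacing $\bs e(z)$ by the dressed current $\Eb(z)$ and exploiting the totally symmetric object \eqref{E^n} together with the zero conditions of Lemma \ref{zero-cond}. First I would record the exchange relation $f(w/z)\Eb(z)\Eb(w)=f(z/w)\Eb(w)\Eb(z)$, which says precisely that the matrix elements of $\Eb(z_1,\ldots,z_N)$ in \eqref{E^n} are symmetric (symmetric Laurent polynomials divided by $\prod_{i<j}(z_i-z_j)^4$). Rewriting the product $\bI_m\bI_n$ as a single $(m+n)$-fold integral and using \eqref{E^n} to trade the ordered product $\Eb(z_1)\cdots\Eb(z_m)\Eb(w_1)\cdots\Eb(w_n)$ for $\Eb(\mathbf z,\mathbf w):=\Eb(z_1,\ldots,z_m,w_1,\ldots,w_n)$ divided by $f$ over all ordered pairs, I obtain
\begin{align*}
\bI_m\bI_n=\int\!\!\cdots\!\!\int \Eb(\mathbf z,\mathbf w)\,
\frac{\prod_{i<j}\kfun_2(z_j/z_i)\,\prod_{k<l}\kfun_2(w_l/w_k)}
{\prod_{i<j}f(z_j/z_i)\,\prod_{k<l}f(w_l/w_k)\,\prod_{i,k}f(w_k/z_i)}
\;\prod_{i}\frac{dz_i}{2\pi i\,z_i}\,\prod_k\frac{dw_k}{2\pi i\,w_k}\,,
\end{align*}
with the $z$-circles taken outside the $w$-circles. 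The product $\bI_n\bI_m$ has the same form with the cross factor $\prod_{i,k}f(z_i/w_k)$ and the opposite nesting of contours.

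Since $\Eb(\mathbf z,\mathbf w)$ is fully symmetric, the two integrands share the same operator part and differ only in the scalar cross factor and in the nesting of the contours. The plan is then to deform the $w$-circles outward through the $z$-circles so that both integrals are computed over one common family of circles; on the common family the two scalar kernels can be symmetrized against $\Eb(\mathbf z,\mathbf w)$, and the equality of the symmetrized kernels is exactly the Feigin--Odesskii identity \eqref{id-h}, carried over to the present $\mu$-elliptic setting through the $\mu$-strings of $f$. This reproduces the ``bulk'' mechanism that already drove the type $\ssA$ proof of \cite{FKSW}.

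The main obstacle, and the place where the hypothesis $C^2=\mu q_2$ is essential, is the control of the residues crossed during the contour deformation. These residues sit at the poles of the cross factor $f(w_k/z_i)$, namely at $w_k/z_i=1$ and along the $\mu$-strings through $q_1^{-1},q_2^{-1},q_3^{-1}$, together with the poles of the kernels $\kfun_2$. I would show that on each such locus two of the arguments of $\Eb$ align, and that after using a $\kfun_2$-pole (or iterating the residue, as in the proof of Theorem \ref{A comm thm}) a third argument is forced into one of the configurations $(z,q_{c_1}z,q_{c_1}q_{c_2}z)$, $(z,C^{\pm2}z,C^{\pm2}q_c^{\pm1}z)$ or $(C^{-2}z,z,C^{2}z)$ of \eqref{zero1}--\eqref{zero3}. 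The condition $C^2=\mu q_2$ is exactly what aligns the level-dependent factors $(1-C^{\pm2}x)$ of $f$ and the $\mu$-strings with these special ratios, so that every residue carries a vanishing factor $\Eb(\ldots)=0$ from Lemma \ref{zero-cond}. Hence all boundary contributions cancel, the two nestings give the same value, and $[\bI_m,\bI_n]=0$. I expect the hardest bookkeeping to be the high-order poles at coinciding arguments (order $5$ at $w_k/z_i=1$), where the vanishing must be matched against the order of the pole rather than merely at leading order.

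Finally, as a consistency check I would compare with the transfer-matrix route of \cite{FJM}: realizing the $\bI_n$ as Taylor coefficients of a transfer matrix built from the boundary Fock module with twist $\mu$, commutativity should follow from a boundary (reflection) Yang--Baxter relation, of which the zero conditions \eqref{zero1}--\eqref{zero3} are the shuffle-algebra shadow. The two routes should single out the same distinguished value $C^2=\mu q_2$ (and its $q_1,q_3$ variants), which is reassuring.
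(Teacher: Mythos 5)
Your overall framework (rewrite both orderings in terms of the symmetric currents \eqref{E^n}, deform to common contours, use the Feigin--Odesskii identity \eqref{id-h} for the bulk, control the crossed residues) is the same as the paper's, but the central mechanism for the residues is misidentified, and this is a genuine gap. You claim that every residue crossed during the contour deformation ``carries a vanishing factor $\Eb(\ldots)=0$'' by Lemma \ref{zero-cond}. That is not the case: the poles crossed are at two-variable alignments $z_1=q_2^{\mp1}z_i$ and $z_1=C^{\mp2}z_i=(\mu q_2)^{\mp1}z_i$, and all of the zero conditions \eqref{zero1}--\eqref{zero3} are three-variable conditions, so the corresponding residue integrals (e.g.\ $\int\!\!\int \Eb(z_2,q_2^{-1}z_3,z_3)\cdots$) are generically nonzero. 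What actually happens in the paper is that the residues of $\bI_m\bI_n$ at $z_1=q_2^{-1}z_i$ and $z_1=\mu^{-1}q_2^{-1}z_i$ are matched \emph{pairwise} against the residues of $\bI_n\bI_m$ at $z_1=q_2z_i$ and $z_1=\mu q_2 z_i$, after a change of variables, using the quasi-periodicity identities \eqref{id1}, \eqref{id2} of the kernel $\sigma_2$ relative to $f$ (this pairing is where $C^2=\mu q_2$ is really used). The zero conditions of Lemma \ref{zero-cond} enter only at a second stage: after the matching, the two residue integrands agree but their contours differ, and the poles trapped between them (at $z_3=q_sz_2$ and $z_3=\mu^{-1}q_sz_2$, $s=1,3$) correspond to three-variable configurations killed by \eqref{zero1}, \eqref{zero2}. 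Your iteration idea (``using a $\kfun_2$-pole to force a third argument'') does not substitute for this: the residue integrals are already well-defined on the unit circles and there is no further residue to take that would make them vanish wholesale.

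A smaller point: your worry about ``order $5$ poles at $w_k/z_i=1$'' is misplaced. In the integrand the cross factor is $f^{-1}$, not $f$, and together with the $(z_i-z_j)^{-4}$ from the matrix elements of \eqref{E^n} the product $\Eb(z)\Eb(w)$ is regular (in fact vanishes) on the diagonal, as is $\kfun_2$; the poles you must track are the off-diagonal ones at ratios $q_2^{\pm1}$, $C^{\pm2}$, $q_s\mu^{-k}$. Your closing remark about a transfer-matrix/reflection-equation derivation is a reasonable heuristic, but the paper does not provide such a construction for $\cK$ (it is listed as an open problem in the introduction), so it cannot be invoked as an alternative proof here.
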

Theorem \ref{mu q_2} is proved in Appendix \ref{proof app}.
\medskip

In the case $C^2=\mu q_3$ or $C^2=\mu q_1$ we use a different kernel function. For example we have the following theorem.

\begin{thm}\label{mu q_3} Assume that  $C^2=\mu q_3$, and define  
\begin{align}\label{bIn3}
&\bI'_n=
\int\!\!\cdots \!\!\int
\Eb(z_1)\cdots\Eb(z_n)\cdot\prod_{j<k}\kfun_3(z_k/z_j)
\,\prod_{j=1}^n\frac{dz_j}{2\pi i z_j}\,.
\end{align}
We choose the contour for $z_j$  for each $j=1,\ldots,n$ in  such a way that 
\begin{align*}
&\text{$\mu^s q_i^{-1}z_k$ ($s\ge0, i=1,2$, $k<j$) are inside},\\
&\text{$\mu^{-s} q_i z_k$ ($s\ge0, i=1,2$, $k<j$) are outside}.
\end{align*}
Then the elements $\{\bI'_n\}_{n=1}^\infty$ are mutually commutative.
\end{thm}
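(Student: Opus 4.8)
The plan is to run the argument of Theorem~\ref{mu q_2} (proved in Appendix~\ref{proof app}) with the kernel $\kfun_2$ replaced by $\kfun_3$, exploiting the fact that the whole construction is symmetric under permutations of $\{q_1,q_2,q_3\}$. Indeed, the defining relations of $\cK$, hence the current $\Eb(z)$, the commutation function $f(x)$, and the zero conditions of Lemma~\ref{zero-cond} are all invariant under the interchange $q_2\leftrightarrow q_3$, because $g$, $\bar g$, $X$ and $f$ are symmetric in $q_1,q_2,q_3$. Under this interchange $\kfun_2\mapsto\kfun_3$ and the balance condition $C^2=\mu q_2$ becomes $C^2=\mu q_3$; moreover the Feigin--Odesskii identity \eqref{id-h}, proved for $\kfun_2$, holds verbatim for $\kfun_3$ by the same computation. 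Thus every structural ingredient of the proof of Theorem~\ref{mu q_2} transfers, and the only genuinely new point is the choice of integration contour, which is dictated by the pole structure of $\kfun_3$ rather than of $\kfun_2$.

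Concretely, to establish $[\bI'_m,\bI'_n]=0$ I would first write the product $\bI'_m\bI'_n$ as a single $(m+n)$-fold integral of $\Eb(z_1)\cdots\Eb(z_{m+n})$ against $\prod_{j<k}\kfun_3(z_k/z_j)$, reorder the currents by repeated use of $f(w/z)\Eb(z)\Eb(w)=f(z/w)\Eb(w)\Eb(z)$, and symmetrise the kernel over the two blocks of variables by means of the identity \eqref{id-h} for $\kfun_3$. After symmetrisation the two orderings $\bI'_m\bI'_n$ and $\bI'_n\bI'_m$ coincide up to the residues collected when the contours of the variables in one block are deformed across those of the other. The poles responsible for these residues sit at $z_j/z_k\in q_i^{\pm1}\mu^{\Z}$ ($i=1,2$) coming from $\kfun_3$ and $f$, together with the $C^{\pm2}\mu^{\Z}$ poles; with $C^2=\mu q_3$ the resulting coincidences of three consecutive arguments are precisely the configurations $\{z,q_{c_1}z,q_{c_1}q_{c_2}z\}$ and $\{z,C^{\pm2}z,C^{\pm2}q_c^{\pm1}z\}$ of the zero conditions \eqref{zero1}--\eqref{zero3}, so that $\Eb(\ldots)$ vanishes on each such residue.

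The main obstacle is therefore not algebraic but analytic: one must check that the nested prescription ``$\mu^s q_i^{-1}z_k$ inside, $\mu^{-s}q_i z_k$ outside'' ($i=1,2$, $s\ge0$, $k<j$) separates precisely the poles crossed in the reordering, so that every surviving residue meets a zero condition while the remaining ones cancel pairwise after symmetrisation. I would verify that this contour is nonempty and mutually consistent for all $j$ using the genericity of $q_1,q_2,q_3$ and $\mu$, and that in the overlapping region it agrees with the analytic continuation of the naive circle used for $\kfun_2$. The explicit residue bookkeeping mirrors the chain of shifts $z\to\mu q_1^m z$ in the proof of Theorem~\ref{A comm thm} and in the appendix proof of Theorem~\ref{mu q_2}, and introduces no new idea beyond the symmetry reduction and Lemma~\ref{zero-cond}.
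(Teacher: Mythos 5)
Your opening symmetry reduction is exactly the paper's first step: the integrand of $\bI'_n$ is obtained from that of $\bI_n$ by swapping $q_2\leftrightarrow q_3$, under which $C^2=\mu q_2$ becomes $C^2=\mu q_3$, and the algebra $\cK$, the function $f$, the zero conditions of Lemma \ref{zero-cond}, and the identity \eqref{id-h} all transfer. But from there you diverge, and the divergence is where the gap lies. You propose to re-run the entire residue bookkeeping of Appendix \ref{proof app} directly against the new contour prescription, and you explicitly defer the decisive step: ``one must check that the nested prescription \dots separates precisely the poles crossed in the reordering \dots I would verify that\dots''. That verification is the whole content of the theorem in your approach, and it is not carried out; asserting that the contour ``agrees with the analytic continuation of the naive circle used for $\kfun_2$'' in some overlapping region is not a substitute for either doing the pole-by-pole analysis or invoking continuation properly.

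The paper's proof bypasses all of this. After the $q_2\leftrightarrow q_3$ swap, Theorem \ref{mu q_2} gives commutativity of the integrals \eqref{bIn3} over the \emph{unit circle} in the parameter regime $|q_1|,|q_2|>1$. One then analytically continues in the single parameter $q_2$ from $|q_2|>1$ to the regime of interest; commutativity is an identity of meromorphic functions of the parameters and so persists, and the contour prescription stated in the theorem is by construction what the unit circle deforms into under this continuation (the points $\mu^s q_i^{-1}z_k$ remain inside, $\mu^{-s}q_i z_k$ remain outside). No new residue analysis is needed. So your strategy could in principle be completed, but as written it leaves the essential analytic step as a promissory note, and it misses the short argument that makes that step unnecessary.
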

\begin{proof}
The integrand of \eqref{bIn3} are obtained from the one of \eqref{bIn}
by interchanging the roles of $q_2$ and $q_3$. 
The result of the previous subsection tells that, 
if the parameters satisfy $|q_1|,|q_2|>1$, 
then the integrals \eqref{bIn3} over the unit circle give a commutative family.
Being an analytic continuation in the parameter $q_2$ from $|q_2|>1$ to $|q_2|<|q_1|^{-1}$,
the integrals \eqref{bIn3} remain commutative. 
\end{proof}

Thus in a generic admissible $\cK$ module, we have three commutative families of operators. We call them deformed integrals of motion due to the following theorem.

\begin{thm}\label{BCD comm thm} Assume $\hat B$ is stable. 
For $C^2=\mu q_2$, the elements $\bI_n$ commute with all screening operators, 
\begin{align*}
[S_i,\bI_n]=0, \qquad i=0,1,\ldots,\ell,\ n\ge 1.
\end{align*}

\end{thm}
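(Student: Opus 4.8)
The plan is to follow the strategy of Theorem \ref{A comm thm}, separating the cases $i\neq 0$ and $i=0$. For $i=1,\ldots,\ell$ the commutativity is immediate: by Theorem \ref{CD screening thm} we have $[S_i,\bs E(z)]=0$, so $S_i$ may be pulled past each factor $\bs E(z_j)$ in the integrand of \eqref{bIn}; since the kernel $\kfun_2(z_k/z_j)$ is a scalar function it commutes with everything, and therefore $[S_i,\bI_n]=0$. Thus the whole content of the theorem resides in the zeroth screening, where we use the hypotheses $\hat B$ stable and $C^2=\mu q_2$. As in type $\ssA$, by replacing $\kfun_2$ with $\kfun_1$ or $\kfun_3$ if necessary one may normalize the relevant colors, and we work in the region $|q_1|,|q_3|>1$, extending by analytic continuation.

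For type $\ssC$, Theorem \ref{CD screening thm} gives $[S_0,\bs E(z)]=b_{c_1}[S_0,\bs\La_1(z)-\bs\La_1(\mu z)]$, which has exactly the shape of the type $\ssA$ relation \eqref{ScA0}. I would therefore run the argument of Theorem \ref{A comm thm} essentially verbatim. Concretely, set $\cA^{(1)}(z)=z:S_0(\cdots z)\bs\La_1(z):$, so that $[S_0,\bs\La_1(z)]$ is proportional to $\cA^{(1)}(z)$. Using the symmetry of the integrand together with the fact that $S_0(z)$ commutes with $\bs E(w)$ as meromorphic functions, one reduces $[S_0,\bI_n]$ to a single integral carrying the telescoping difference $\cA^{(1)}(z_1)-\cA^{(1)}(\mu z_1)$ in the first slot. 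One then examines the poles of the operator products $\cA^{(1)}(z_1)\bs\La_l(z_2)$ and $\cA^{(1)}(z_1)\bs\La_{\bar l}(z_2)$; by stability $S_0$ interacts only with the end terms $\bs\La_1$ and $\bs\La_{\bar 1}$, and the identity $:S_0(\cdots)\bs\La_1(\mu z):\,\propto\,:S_0(\cdots)\bs\La_{\bar 1}(z):$, coming from \eqref{C-0}, plays the role that $q_2:S_0\bs\La_1(\mu z):=:S_0\bs\La_{\ell+1}(z):$ does in type $\ssA$. Shifting the contour and collecting residues at $z_1=q_1^{-1}z_2,\mu q_1 z_2$ produces a new current $\cA^{(2)}(z)$ with $\mu$ replaced by $\mu q_1$; iterating this until the shift $z\to\mu q_1^m z$ becomes unobstructed, or until no integration variables remain, yields the cancellation.

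For type $\ssD$ the boundary commutator has two contributions, $[S_0,\bs E(z)]=b_{c_1}[S_0,\bs\La_1(z)-\bs\La_1(\mu z)]+b_{c_2}[S_0,\bs\La_2(z)-\bs\La_2(\mu z)]$, so I would introduce two currents $\cA^{(1)}_1(z)=z:S_0(\cdots z)\bs\La_1(z):$ and $\cA^{(1)}_2(z)=z:S_0(\cdots z)\bs\La_2(z):$ and carry out the same residue analysis for each, using the two identities furnished by \eqref{D-0} that relate $\bs\La_1(\mu\,\cdot)$ and $\bs\La_2(\mu\,\cdot)$ to $\bs\La_{\bar 2}$ and $\bs\La_{\bar 1}$. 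The main obstacle I anticipate is precisely this $i=0$ residue bookkeeping: because $\bs E(z)$ now contains both the unbarred terms $\bs\La_l$ and the barred terms $\bs\La_{\bar l}$, there are more operator products whose poles must be tracked than in type $\ssA$, and in type $\ssD$ one must verify that the residues produced by the two families $\cA^{(1)}_1,\cA^{(1)}_2$ organize into the correct telescoping pattern so that everything cancels. The stability hypothesis $B_{0,\ell}=B_{\ell,0}=0$ is what confines these interactions to the near end of the Dynkin diagram and keeps the residue calculation tractable; dropping it would require controlling additional poles coming from the coupling of $S_0$ to the $\ell$-th node.
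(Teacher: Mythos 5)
Your treatment of $i\neq 0$ and of type $\ssC$ matches the paper's proof: for $i\neq 0$ the statement is immediate from Theorem \ref{CD screening thm}, and for type $\ssC$ the paper likewise just runs the type $\ssA$ contour-shifting argument of Theorem \ref{A comm thm} verbatim, using the relation $:S_0(\cdots)\bs\La_1(\mu z):\,\propto\,:S_0(\cdots)\bs\La_{\bar 1}(z):$ coming from \eqref{C-0} exactly as you describe.

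The gap is in type $\ssD$, and it is precisely the point you flag as "the main obstacle I anticipate" without resolving it. Your plan of introducing two separate currents $\cA^{(1)}_1(z)=z:S_0(\cdots)\bs\La_1(z):$ and $\cA^{(1)}_2(z)=z:S_0(\cdots)\bs\La_2(z):$ and running "the same residue analysis for each" does not work as stated, because the two families do not telescope independently: $\bs\La_1$ and $\bs\La_2$ have non-trivial mutual contractions, so the residues of $\cA^{(1)}_1(z_1)\Eb(z_2)$ already produce normal-ordered products mixing $\bs\La_1$ and $\bs\La_2$. The paper's resolution is to combine the two contributions into a single current $\cA^{(1)}(z)=z:S_0(s_3^{-1}\mu^{-1/2}z)\bigl(\bs\La_1(z)+\bs\La_2(z)\bigr):$ and to verify that the iterated residue procedure closes on an explicit $(m+1)$-term family $\cA^{(m)}(z)$, a sum over $k$ of words $\bs\La_2(q_3^{-m+1}z)\cdots\bs\La_2(q_3^{-m+k}z)\,\bs\La_1(q_3^{-m+k+1}z)\cdots\bs\La_1(z)$ with explicit coefficients $a^{(m)}_k=\prod_{j=1}^k\frac{1-q_3^{m-j+1}}{1-q_3^j}\frac{1-q_1q_3^j}{1-q_1q_3^{m-j+1}}$, each step again producing a difference of the form $\cA^{(m)}(\cdot)-\cA^{(m)}(\mu q_1^{m-1}\cdot)$. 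Identifying this closed family and its coefficients is the missing combinatorial idea; without it the claimed cancellation in type $\ssD$ is not established.
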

\begin{proof}
The argument for type $\ssC$ is similar to that in type $\ssA$, see Theorem \ref{A comm thm}. 

For type $\ssD$ the formulas become a little more cumbersome. We illustrate it in the case $c_0=c_1=c_2=2$.
We start from 
\begin{align*}
&[S_0,\Eb(z)]=const.(\cA^{(1)}(z)-\cA^{(1)}(\mu z))\,,
\end{align*}
where now $\cA^{(1)}(z)$ is a sum of two terms
\begin{align*}
&\cA^{(1)}(z)=z :S_0(s_3^{-1}\mu^{-1/2}z)\bigl(\bs\Lambda_1(z)+\bs\Lambda_2(z)\bigr):\,.
\end{align*}
Taking residues repeatedly we obtain currents for $m=1,2,\ldots$
\begin{align*}
&\cA^{(m)}(z)=z:S_0(s_3^{-1}q_1^{-m+1}\mu^{-1/2}z)
\sum_{k=0}^m a^{(m)}_k \bs \Lambda_2(q_3^{-m+1}z)\cdots \bs\Lambda_2(q_3^{-m+k}z)
 \bs \Lambda_1(q_3^{-m+k+1}z)\cdots \bs\Lambda_1(z):\,,\\
&a^{(m)}_k=\prod_{j=1}^k\frac{1-q_3^{m-j+1}}{1-q_3^j}\frac{1-q_1q_3^j}{1-q_1q_3^{m-j+1}}\,.
\end{align*}
The rest of the argument is the same. 
\end{proof}

\medskip

It would be interesting to study the spectrum of deformed integrals of motion.

 \medskip
 
Theorem \ref{BCD comm thm} deals with affine nodes of types $\ssC$ and $\ssD$. If the affine node is of type $\ssB$, the integral of current $\tilde{\bs E}(z)$, see \eqref{Cartan example}, commutes with all the screenings. In the case the $\ell$-th node is not of type $\ssB$, this integral coincides with the integral of another current for which the affine node is of type $\ssC$ or $\ssD$ (the same as the $\ell$-th node for the original current, see \eqref{bar chi}). Therefore, we can include the integral of $\tilde{\bs E}(z)$ into a family of integral of motions corresponding to that current. We expect this family commutes with all screening operators. 

\section{Additional remarks}\label{section suppl}
\subsection{Integrals of motion of KZ type}\label{sec KZ}

In this subsection we continue with the Cartan matrices in Table \ref{CartanTable}
except for types $\ssA^{(1)}_\ell,\ssD^{(2)}_{\ell+1}$, and construct another set of commuting operators which commute with integrals $\bI_n$. 
We hope that these integrals may be more convenient for Bethe ansatz study in the future.

The results in this section depend on the following technical statement which we do not discuss.
\begin{conj}
The $\cK$ module $\mathbb{F}=\F_2(u_1)\otimes\cdots\otimes\F_2(u_\ell)\otimes \F^X_c$ is irreducible for generic parameters $u_1,\dots,u_\ell$. \qed
\end{conj}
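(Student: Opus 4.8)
The plan is to exhibit $\mathbb{F}$ as a highest weight type $\cK$-module and to reduce its irreducibility to the non-degeneracy of a contravariant form, which in turn is controlled by a Shapovalov-type determinant that one shows is a nonzero rational function of the evaluation parameters. First I would fix the free-field realization supplied by Corollary \ref{include} together with Propositions \ref{prop B boundary} and \ref{prop CD boundary}: the current $E(z)$ acts on $\mathbb{F}$ as the explicit sum of vertex operators $\sum_i b_{c_i}\Lambda_i(z)+\cdots+\sum_i b_{c_i}\Lambda_{\bar i}(z)$, with the extra term $k^\ssB_{c}\Lambda_0(z)$ in type $\ssB$, while $K^\pm(z)$ acts through the diagonal Heisenberg generators $\Delta^{(\ell)}H_r$. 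The module $\mathbb{F}$ is admissible and $\Z$-graded with degrees bounded from above, and its top component is one-dimensional, spanned by the tensor product of vacua $v=v_{2}\otimes\cdots\otimes v_{2}\otimes v_X$. Inspecting the creation/annihilation structure of each $\Lambda_i(z)$ one checks that $v$ is a highest weight vector, i.e.\ $E_n v=0$ and $H_r v=0$ for all $n,r>0$, while $v$ is a joint eigenvector of $E_0$ and of the $K^+$-modes. Consequently irreducibility is equivalent to the conjunction of two statements: (i) $v$ generates $\mathbb{F}$ under $\cK$, and (ii) for generic $u_i$ the vector $v$ is, up to scalar, the only highest weight vector.

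For the cyclicity (i) I would argue by degree. Acting on $v$, the negative modes $E_{-n}$ ($n\ge 0$) produce, in each graded layer, the boson creation operators $h^{(i)}_{-r}$ in the various slots dressed by the $\psi^+$ and $K^+$ factors; combined with the diagonal lowering modes $H_{-r}$ one obtains a system whose transition matrix from $\{E_{-n}v,\ldots\}$ to the monomials $h^{(i_1)}_{-r_1}\cdots v$ is triangular with respect to a suitable filtration and invertible precisely when the evaluation parameters are in general position. Genericity of the $u_i$ then guarantees that $\cK v$ meets every graded piece, hence $\cK v=\mathbb{F}$.

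The main obstacle is (ii), the absence of extra highest weight (singular) vectors. I would encode this through a contravariant form on $\mathbb{F}$: on the underlying bosonic Fock space use the standard pairing determined by $(h^{(i)}_{-r})^\dagger=h^{(i)}_r$, and verify that the adjoint of every generator of $\cK$ again lies in the completion of $\cK$, so that the form is contravariant. A proper submodule exists exactly when this form degenerates, i.e.\ when one of the finite-dimensional Gram matrices $G_d$ on $\mathbb{F}_d$ is singular, and each $\det G_d$ is a Laurent polynomial in the $u_i$ and $s_1,s_2,s_3$. Since irreducibility is therefore an open condition, it suffices to produce a single specialization, or a degeneration, at which $\mathbb{F}$ is visibly simple, and to check $\det G_d\not\equiv 0$. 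The delicate point—and the reason the statement is left as a conjecture—is the explicit control of these determinants: one must show their zero loci are the expected resonance hyperplanes $u_i/u_j=s_1^{a}s_2^{b}$, so that a generic choice of $u_i$ avoids all of them simultaneously. I expect this is most cleanly obtained by an induction on $\ell$, peeling off the leftmost factor $\F_2(u_1)$ through the $\E$-coproduct, invoking the known generic irreducibility of the $\E$-module $\F_2(u_1)\otimes\cdots\otimes\F_2(u_\ell)$, and analysing the intertwiners that move $\F_2(u_1)$ past the boundary module; nondegeneracy of these intertwiners for generic $u_1$ would propagate irreducibility from $\ell-1$ to $\ell$ factors.
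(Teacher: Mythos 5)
The paper offers no proof of this statement to compare against: it is stated explicitly as a conjecture, prefaced by ``the following technical statement which we do not discuss,'' and the authors use it only as an unproven hypothesis for the KZ-type integrals of Section \ref{sec KZ}. Your proposal is therefore being measured against nothing but itself, and as written it is a strategy sketch rather than a proof. The setup is fine (the vacuum $v$ is indeed a degree-top highest weight vector, since each $\Lambda_i(z)v$ and $\Lambda_{\bar i}(z)v$ involves only nonnegative powers of $z$), but the two substantive steps are asserted, not established. In (i), the ``triangular with invertible transition matrix'' claim for cyclicity is exactly the kind of statement that requires the explicit contraction data and a genuine computation; nothing in the proposal shows the relevant determinants are not identically zero. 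In (ii), you concede yourself that the Gram determinants are not controlled, which is the whole content of the conjecture.

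There is also a structural gap specific to $\cK$ that you pass over: the contravariant form. For $\E$ one has the anti-involution exchanging $e(z)$ and $f(z)$ and fixing the Cartan currents, which is what makes the radical of the Shapovalov form a submodule. The algebra $\cK$ has a single generating current $E(z)$ with no ``$f$''-partner, and the adjoint of $E_n$ under the bosonic pairing $(h^{(i)}_{-r})^{\dagger}=h^{(i)}_{r}$ is a priori just some operator on $\mathbb{F}$; you never verify that it lies in a completion of the image of $\cK$ (the candidate identity would be something like $E(z)^{\dagger}\sim E(C^{2}/z)$ after accounting for the asymmetry between the $\psi^{+},K^{+}$ and $\psi^{-},K^{-}$ dressings of $\Lambda_i$ versus $\Lambda_{\bar i}$, and this needs proof). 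Without it, the equivalence ``proper submodule $\Leftrightarrow\det G_d=0$'' fails and the whole reduction collapses. Likewise, the proposed induction via the comodule map $\Delta:\cK\to\E\tilde\otimes\cK$ only realizes $\mathbb{F}$ as a module over the image of $\Delta$, so generic irreducibility of the $\E$-factor does not by itself propagate; one would need the intertwiners $\Rc$ and $\ssK(u)$ of Appendix \ref{sec:Kmat} to be generically invertible and to generate enough of $\End\mathbb{F}$, which is again essentially the statement being conjectured.
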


The vector space $\mathbb{F}$ 
is an irreducible representation of the set of $\ell+1$ bosons $\{\ssa_{i,r}\mid i=0,\ldots,\ell,\ r\neq 0\}$.
We regard  $\{\ssa_{i,0}\mid i=0,\ldots,\ell\}$ as functions of the parameters 
 $u_1,\ldots,u_\ell$ 
through \eqref{A-i}, \eqref{B-ell}--\eqref{D-ell} and \eqref{B-0}--\eqref{D-0}. 
Separating the zero modes we shall write $A_i(z)=e^{\ssa_{i,0}}A^{osc}_i(z)$, $Y_i(z)=e^{\ssy_{i,0}}Y_i^{osc}(z)$. 
To each $i=0,\ldots,\ell$ we associate a reflection operator $R_i\in\End\mathbb{F}$ defined as follows.

\begin{prop}
There exist operators  $R_i\in\End\mathbb{F}$ with the properties
\begin{align*}
&R_i Y_j(z)=Y_j(z)R_i \quad (j\neq i)\,,\\
&R_i\Bigl(e^{\ssy_{i,0}}Y^{osc}_i(z)+e^{\ssy_{i,0}-\ssa_{i,0}}:\frac{Y^{osc}_i(z)}{A_i^{osc}(\hat s_i z)}:\Bigr)
=
\Bigl(e^{\ssy_{i,0}-\ssa_{i,0}}Y^{osc}_i(z)+e^{\ssy_{i,0}}:\frac{Y^{osc}_i(z)}{A_i^{osc}(\hat s_i z)}:\Bigr)R_i\,,
\end{align*}
where $\hat s_i=s_2$ for $i\neq 0,\ell$, $\hat s_i=s_2s_3^{1/2},s_2s_3^{-1},s_2$ 
if $i\in\{0,\ell\}$ is of type $\ssB$, $\ssC$, $\ssD$, respectively.
\end{prop}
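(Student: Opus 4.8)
The plan is to realize each $R_i$ as the operator on $\mathbb{F}$ that implements the $i$-th Weyl reflection at the level of the free bosons. The two displayed relations say precisely that $R_i$ fixes every $Y_j(z)$ with $j\neq i$ and, in the $i$-th direction, interchanges the two vertex operators $Y^{osc}_i(z)$ and $:Y^{osc}_i(z)/A^{osc}_i(\hat s_i z):$ while leaving the zero-mode factors $e^{\ssy_{i,0}},e^{\ssa_{i,0}}$ alone; recall that on the single Fock space $\mathbb{F}$ these zero modes act by scalars (functions of the $u_j$), so commuting with them is automatic. Thus it suffices to produce $R_i\in\End\mathbb{F}$ whose conjugation action swaps these two currents. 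The one structural input that decouples node $i$ from the rest is \eqref{defY}: $A_i(z)$ has nontrivial contraction with $Y_j(z)$ only when $j=i$.

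Concretely, I would first define a candidate automorphism $\sigma_i$ of the oscillator Heisenberg algebra (generated by $\ssa_{j,r}$, $j=0,\dots,\ell$, $r\neq0$) by
\[
\sigma_i(\ssy_{j,r})=\ssy_{j,r}\ (j\neq i),\qquad
\sigma_i(\ssy_{i,r})=\ssy_{i,r}-\hat s_i^{-r}\ssa_{i,r},\qquad
\sigma_i(\ssa_{i,r})=-\ssa_{i,r}.
\]
The last rule is forced by requiring $\sigma_i^2=\id$, after which $\sigma_i$ manifestly carries $Y^{osc}_i(z)$ to $:Y^{osc}_i(z)/A^{osc}_i(\hat s_i z):$ and back. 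The key step is to check that $\sigma_i$ preserves the commutation relations, i.e. that it is an isometry of the bilinear form encoded by $\hat B$. The cross brackets $[\sigma_i\ssy_{i,r},\sigma_i\ssy_{j,s}]$ with $j\neq i$ are unchanged precisely because $\mathcal{C}(A_i,Y_j)=0$ for $j\neq i$, and the diagonal bracket $[\sigma_i\ssy_{i,r},\sigma_i\ssy_{i,-r}]$ reproduces $[\ssy_{i,r},\ssy_{i,-r}]$ if and only if the self-contraction satisfies $\mathcal{C}(A_i(z),A_i(w))=d_i(\hat s_i+\hat s_i^{-1})$, that is $C_{ii}=\hat s_i+\hat s_i^{-1}$ in the normalization $\hat B=\hat D\hat C$ of Section~\ref{sec qq B}. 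This is exactly why $\hat s_i$ takes the stated values: solving $C_{ii}=\hat s_i+\hat s_i^{-1}$ with $C_{ii}$ computed from \eqref{A-contr}, \eqref{B cont}--\eqref{D cont} for the finite node $\ell$ and from \eqref{B cont0}--\eqref{D cont0} for the affine node $0$ gives $\hat s_i=s_2$ for interior nodes and $\hat s_i=s_2s_3^{1/2},\,s_2s_3^{-1},\,s_2$ at an end node of type $\ssB,\ssC,\ssD$ respectively. Thus this step is a finite, essentially bookkeeping, verification.

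Next I would promote $\sigma_i$ to an operator. Because $\sigma_i$ is homogeneous of degree $0$ (it sends each mode of degree $r$ to a combination of modes of degree $r$), it preserves the highest-weight vacuum condition, so the twisted module $\mathbb{F}^{\sigma_i}$ is again a Fock module with the same grading. Since $\mathbb{F}$ is an irreducible module over the oscillator algebra, there is an invertible intertwiner $R_i\in\End\mathbb{F}$ with $R_i\,x\,R_i^{-1}=\sigma_i(x)$ for every oscillator $x$, unique up to a scalar; I would normalize it by $R_iv=v$ on the vacuum. Explicitly one may take $R_i=(-1)^{N_i}$ after diagonalizing the involution $\sigma_i$ into its $\pm1$-eigenbosons, the $-1$-eigendirection being the nondegenerate root boson $\ssa_{i,\bullet}$ (nondegeneracy holding because $B_{ii}\neq0$), which exhibits $R_i$ concretely on $\mathbb{F}$.

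Finally I would read off the assertions. The relation $R_iY_j(z)=Y_j(z)R_i$ for $j\neq i$ is immediate from $\sigma_i(\ssy_{j,r})=\ssy_{j,r}$ and the scalar nature of $e^{\ssy_{j,0}}$. For the main relation, conjugation by $R_i$ gives $R_i\,e^{\ssy_{i,0}}Y^{osc}_i(z)=e^{\ssy_{i,0}}:Y^{osc}_i(z)/A^{osc}_i(\hat s_i z):R_i$ and $R_i\,e^{\ssy_{i,0}-\ssa_{i,0}}:Y^{osc}_i(z)/A^{osc}_i(\hat s_i z):=e^{\ssy_{i,0}-\ssa_{i,0}}Y^{osc}_i(z)R_i$, and summing reproduces exactly the displayed identity. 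The main obstacle is the isometry verification that makes $\sigma_i$ well defined --- equivalently, confirming that the prescribed $\hat s_i$ are the unique values for which the swap preserves the Heisenberg relations --- together with the conceptual point that a degree-$0$ automorphism is implementable on the specific module $\mathbb{F}$, where the irreducibility of $\mathbb{F}$ is used. I note that the fermionic (odd) nodes, which do not occur among the Table~\ref{CartanTable} cases treated here, would require a separate odd-reflection analysis.
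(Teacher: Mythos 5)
Your overall reading of what $R_i$ must do is too strong, and the construction built on it does not exist. You want $R_i$ to implement an involutive automorphism $\sigma_i$ of the oscillator algebra that fixes $\ssy_{j,r}$ for $j\neq i$ and literally swaps the two vertex operators $Y^{osc}_i(z)$ and $:Y^{osc}_i(z)/A_i^{osc}(\hat s_i z):$ under conjugation. Such a $\sigma_i$ would have to satisfy both $\sigma_i(\ssy_{i,r})=\ssy_{i,r}-\hat s_i^{-r}\ssa_{i,r}$ and $\sigma_i(\ssa_{i,r})=-\ssa_{i,r}$, but these are incompatible: $\ssa_{i,r}$ is the linear combination of the $\ssy_{j,r}$ determined by \eqref{AinY}, with diagonal coefficient $\hat s_i^{\,r}+\hat s_i^{-r}$ (this is exactly the content of $C_{ii}=\hat s_i+\hat s_i^{-1}$), so your prescription on the $\ssy$'s forces
\begin{align*}
\sigma_i(\ssa_{i,r})=\ssa_{i,r}-(\hat s_i^{\,r}+\hat s_i^{-r})\,\hat s_i^{-r}\ssa_{i,r}=-\hat s_i^{-2r}\ssa_{i,r},
\end{align*}
not $-\ssa_{i,r}$. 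Equivalently, writing $\ssy_{i,r}=p_r+c_r\ssa_{i,r}$ with $[p_r,\ssa_{i,s}]=0$, one finds $c_r=1/(\hat s_i^{\,r}+\hat s_i^{-r})$ and $c_r-\hat s_i^{-r}=-\hat s_i^{-2r}c_r$, so the unique rescaling of $\ssa_{i,r}$ carrying $Y^{osc}_i$ to $:Y^{osc}_i/A_i^{osc}(\hat s_i z):$ is $\ssa_{i,r}\mapsto-\hat s_i^{-2r}\ssa_{i,r}$; this is an automorphism but not an involution, and it does not carry $:Y^{osc}_i/A_i^{osc}(\hat s_i z):$ back to $Y^{osc}_i$. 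Hence for generic $q$ no Heisenberg automorphism performs the swap, your ``isometry verification'' cannot succeed, and $R_i=(-1)^{N_i}$ does not satisfy the displayed relation.

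The point being missed is that the proposition asserts only the weaker, $U_q\widehat{\mathfrak{sl}}_2$-type intertwining of the sum $u_1\Lambda_++u_2\Lambda_-$ (the zero-mode prefactors are exchanged while the oscillator parts are tied together only through the sum), not termwise conjugation, and the operator achieving it is a genuine quantum $R$ or $K$ matrix rather than the implementation of a lattice reflection. The paper's proof takes $R_i=\Rc_{i,i+1}(u_i/u_{i+1})$ coming from the universal $R$ matrix of $\E$ for $1\le i\le \ell-1$, $R_\ell=\ssK_\ell(u_\ell)$ from Proposition \ref{Kmat} for boundary nodes of types $\ssB$, $\ssC$ (these are again $\widehat{\mathfrak{sl}}_2$ $R$ matrices with modified parameters), and $R_\ell=\ssK_\ell\Rc_{\ell-1,\ell}(u_{\ell-1}u_\ell)\ssK_\ell$ for type $\ssD$, with $i=0$ treated similarly. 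The naive sign flip $(-1)^{\ssN}$ appears only as the type $\ssD$ $K$ matrix, precisely because there $\Lambda_-(z)=\,:\Lambda_+(z)^{-1}:$ with no shift of the argument --- the one degenerate situation in which your construction is valid. Your identification of the values of $\hat s_i$ via $C_{ii}=\hat s_i+\hat s_i^{-1}$ is correct, but the existence of $R_i$ requires the quantum-group input, not a free-field reflection.
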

In other words, $R_i$ is an operator depending only on one boson $\{\ssa_{i,r}\}_{r\neq0}$ as well as the zero modes 
$\{\ssa_{j,0}\}_{j=1,\ldots,\ell}$, and implements the Weyl reflection on the latter. 
\begin{proof}
It is clear that for $i=1,\ldots,\ell-1$ the $R$ matrix $R_i=\Rc_{i,i+1}(u_i/u_{i+1})$,  obtained from the universal $R$ matrix of $\E$, has the required properties. 
For $i=\ell$ and type $\ssB$, $\ssC$, we use Proposition \ref{Kmat} to get 
$R_\ell=\ssK_\ell(u_\ell)$.
In the case of type $\ssD$, from the remark after Proposition \ref{Kmat} we have
$\ssK_\ell A_{\ell-1}(z) \ssK_\ell=A_\ell(z)$, $\ssK_\ell^2=1$.
Hence we can take
$R_\ell=\ssK_\ell \Rc_{\ell-1,\ell}(u_{\ell-1}u_{\ell}) \ssK_\ell$.

The case $i=0$ is similar.
\end{proof}

For $i\neq 0$, they are intertwiners of $\cK$ modules. Writing the $\Eb(z)$ current as 
$\Eb(z;u_1,\ldots,u_\ell)$ we have
\begin{align}
&\Rc_{i,i+1}(u_i/u_{i+1})\Eb(z;u_1,\ldots,u_i,u_{i+1},\ldots,u_\ell)
\label{RiE}\\
&=
\Eb(z;u_1,\ldots,u_{i+1}, u_i,\ldots,u_\ell)\Rc_{i,i+1}(u_i/u_{i+1})
\quad (i=1,\ldots,\ell-1)\,,
\notag\\
&\ssK_{\ell}(u_\ell)\Eb(z;u_1,\ldots,u_{\ell-1},u_\ell)=
\Eb(z;u_1,\ldots,u_{\ell-1},u^{-1}_\ell)\ssK_{\ell}(u_\ell)\quad (i=\ell).
\label{RlE}
\end{align} 
For $i=0$ the corresponding relation takes the form 
\begin{align}
&\ssK_1(u_1)\Eb^*(z;u_1,u_{2},\ldots,u_\ell)=\Eb^*(z;u^{-1}_1,u_{2},\ldots,u_\ell)\ssK_1(u_1)\,,
\label{R0E}
\end{align} 
where $\Eb^*(z;u_1,\ldots,u_\ell)$ means 
$\Eb(z;u_1,\ldots,u_\ell)-b_2\bigl(\bs \La_{1}(z)-  \bs \La_{1}(\mu z)\bigr)$
or 
$\Eb(z;u_1,\ldots,u_\ell)-b_2\bigl(\bs \La_{1}(z)-  \bs \La_{1}(\mu z)\bigr)
-b_2\bigl(\bs \La_2(z)-  \bs \La_2(\mu z)\bigr)$, 
depending on whether the zeroth node is of type $\ssC$ or of type $\ssD$.

We now introduce operators $\ssT_i$, $i=1\ldots,\ell$, by
\begin{align*}
&\ssT_i=\ssT_i^+\ssT_i^-\,,\\
&\ssT_i^+=\Rc_{i-1,i}(u_i/u_{i-1})\cdots\Rc_{1,2}(u_i/u_1)
\ssK_1(u_i)\Rc_{1,2}(u_1u_i)\cdots\Rc_{i-1,i}(u_{i-^1}u_i)\,,\\
&\ssT_i^-=\Rc_{i,i+1}(u_iu_{i+1})\cdots\Rc_{\ell-1,\ell}(u_iu_{\ell})
\ssK_\ell(u_i)\Rc_{\ell-1,\ell}(u_i/u_\ell)\cdots\Rc_{i,i+1}(u_i/u_{i+1})\,.
\end{align*}
We call $\ssT_i$ integrals of motion of KZ type for the following reason.

\begin{thm}
The operators $\ssT_i$ and $\bI_n$ in Theorems \ref{mu q_2}, \ref{mu q_3} are mutually commutative:
\begin{align*}
&[\ssT_i,\ssT_j]=0\,\qquad (i,j=1,\ldots,\ell),\\
&[\ssT_i, \mathbf{I}_n]=0\,\qquad (i=1,\ldots,\ell,\ n\ge 1).
\end{align*}
\end{thm}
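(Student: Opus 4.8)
The two assertions rest on the Yang--Baxter and reflection equations, and I would prove them separately. For the mutual commutativity $[\ssT_i,\ssT_j]=0$ I would first assemble the structural identities of the building blocks. The braidings $\Rc_{i,i+1}$ obey the spectral Yang--Baxter equation in braid form and the unitarity $\Rc_{i,i+1}(x)\Rc_{i,i+1}(x^{-1})=\id$, both inherited from the universal $R$-matrix of $\E$; the boundary operators $\ssK_1(x)$ and $\ssK_\ell(x)$ obey the left and right reflection equations together with $\ssK_\ell^2=1$, as furnished by Proposition \ref{Kmat} and Appendix \ref{sec:Kmat}. Given these, commutativity follows from the standard railroad argument: one slides the factors of $\ssT_j$ through $\ssT_i$ one at a time, using Yang--Baxter to exchange braidings and the reflection equations to move braidings past $\ssK_1$ and $\ssK_\ell$, until $\ssT_i\ssT_j$ is rearranged into $\ssT_j\ssT_i$. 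Conceptually, these relations make $\Rc_{i,i+1}$ and the two boundary operators a representation of the affine Hecke algebra of type $C_\ell$, under which the $\ssT_i=\ssT_i^+\ssT_i^-$ are the images of the commuting Cherednik $Y$-operators, i.e. the round-trip transports of site $i$ off both walls.

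For $[\ssT_i,\bI_n]=0$ I would use the intertwining relations \eqref{RiE}--\eqref{R0E} to control how $\ssT_i$ conjugates the current $\Eb(z)$. By \eqref{RiE} each braiding permutes the evaluation parameters inside $\Eb(z;u_1,\dots,u_\ell)$, by \eqref{RlE} the factor $\ssK_\ell(u_i)$ reflects $u_i\mapsto u_i^{-1}$, and by \eqref{R0E} the factor $\ssK_1(u_i)$ does the same for the truncated current $\Eb^*(z)$. Composing $\ssT_i^+$ and $\ssT_i^-$, the two wall reflections are arranged to cancel, so that $\ssT_i$ conjugates $\Eb(z;u_1,\dots,u_\ell)$ back to itself, up to the discrepancy between $\Eb$ and $\Eb^*$ at the left wall. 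That discrepancy is exactly the boundary term $b_{c_1}\bigl(\bs\Lambda_1(z)-\bs\Lambda_1(\mu z)\bigr)$, plus in type $\ssD$ the analogous $\bs\Lambda_2$ term --- a difference of the shape $f(z)-f(\mu z)$. Substituting into the definition \eqref{bIn} of $\bI_n$ and using the symmetry of the integrand, $[\ssT_i,\bI_n]$ reduces to integrals of such differences against the Feigin--Odesskii kernel $\kfun_2$, which vanish by precisely the contour and residue mechanism already used in the proofs of Theorem \ref{A comm thm} and Theorem \ref{BCD comm thm}.

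The main obstacle is the left-wall bookkeeping in the second part. Because $\ssK_1$ intertwines only $\Eb^*(z)$ and not $\Eb(z)$ itself, conjugation by $\ssT_i$ generates additive boundary terms, and one must show that after symmetrization and integration they produce no net contribution. This forces a rerun of the delicate residue computation of Theorem \ref{A comm thm} --- shifting contours by $z\mapsto\mu q_1^m z$ and tracking exactly which poles obstruct the shift --- now inside the transported configuration created by $\ssT_i$, with a uniform cancellation of the obstructing poles to be verified for all $i$ and in both types $\ssC$ and $\ssD$. A secondary but necessary check is the precise matching of the spectral parameters in $\ssT_i^+\ssT_i^-$, confirming that the two reflections genuinely cancel rather than compound; this is where the contrast between the arguments $u_i/u_j$ and $u_iu_j$ in the definitions of $\ssT_i^\pm$ must be seen to conspire correctly.
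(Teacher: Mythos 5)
Your proposal follows essentially the same route as the paper: the first commutativity is the standard consequence of the ordinary and boundary Yang--Baxter equations, and the second rests on the intertwining relations \eqref{RiE}--\eqref{R0E} together with the observation that the only nontrivial point is the shift $\bs\Lambda_1(z_j)\to\bs\Lambda_1(\mu z_j)$ under the integral, which the paper likewise singles out (and then declares unobstructed by poles, i.e.\ straightforward). Your extra caution about rerunning the residue analysis is reasonable but not a departure from the paper's argument.
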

\begin{proof}
The commutativity of $\ssT_i$'s is a simple consequence of the (ordinary and boundary)
Yang-Baxter equations. 

To see the second statement, we use \eqref{RiE}, \eqref{RlE} and \eqref{R0E}.
The only issue is 
to check that one can safely shift $\bs\Lambda_1(z_j)$ to $\bs\Lambda_1(\mu z_j)$ without encountering poles in between. 
This is straightforward. 
\end{proof}

\subsection{Exceptional types}\label{sec G2}
The $\cW$ algebras and integrals of motion of type $\ssA$ are obtained from the quantum toroidal algebra $\E$.
In this paper we have introduced an algebra $\cK$ which allows us to treat 
deformed $\cW$ algebras of non-exceptional types uniformly. A natural question is what happens in exceptional types.

For an exceptional type, one can consider 
a similar algebra by taking the current $T(z)$ in the sense of \cite{FR1}, together with a vertex operator $Z(z)$ in one extra boson such that $Z(z)T(w)=T(w)Z(z)$ and such that
all terms in $E(z)=T(z)Z(z)$ have rational contractions. 
This gives the quantum algebra in this type similar to $\E$
and $\cK$.

For example, in the case of the seven dimensional representation of ${\mathsf G}_2$, 
one obtains the relation with four $\delta$-functions  
\begin{align*}
&g(z,w)E(z)E(w)+g(w,z)E(w)E(z)\\
&=c_1\Bigl(\delta\bigl(q_1^3q_2^6 \frac{w}{z}\bigr)w^3K(w)+
\delta\bigl(q_1^3q_2^6 \frac{z}{w}\bigr)z^3K(z)\Bigr)\\
&+c_2\Bigl(\delta\bigl(q_1^2q_2^4  \frac{w}{z}\bigr)w^3 :E(q_1q_2^2w)\tilde{K}(w):   
+\delta\bigl(q_1^2q_2^4  \frac{z}{w}\bigr)z^3:E(q_1q_2^2z)\tilde{K}(z):\Bigr),
\end{align*}
where $K(z)=\,\,:Z(z)Z(q_1^3q_2^6 z):$, $\tilde{K}(z)=\,\,:Z(z)Z(q_1^2q_2^4 z)Z^{-1}(q_1q_2^2z):$, and  $c_1,c_2$ are constants and
\begin{align*}
&\mathcal{C}(Z(z),Z(w))=(1-q_1)(1-q_3)\frac{1+q_1q_2^2}{1+q_1^3q_2^6}(q_1q_2^2-1+q_2)\,,
\\
&\mathcal{C}(K(z),E(w))=(1-q_1)(1-q_3)(1+q_1q_2^2)(q_1q_2^2-1+q_2)\,,
\\
&\mathcal{C}(\tilde K(z),E(w))=(1-q_1)(1-q_3)(q_1q_2^2-1+q_2)\,.
\end{align*}
The role of this algebra is not clear since $\ssG_2$ is not a part of any family. In particular, we do not expect any comodule or coalgebra structure.

\appendix

\section{Proof of Theorem \ref{mu q_2}}\label{proof app}
In this Section we prove Theorem \ref{proof app}.
We have  $C^2=\mu q_2$.

\subsection{Commutativity $[\bI_1,\bI_2]=0$.} 
As an illustration, let us verify the commutativity of $\bI_1$ and $\bI_2$. 

We note that, by making use of the decomposition 
\begin{align*}
&\kfun_2(x)=q_2C^2f(x)\sigma_2(x)\sigma_2(x^{-1})\,,\quad
\sigma_2(x)=(1-x)^3 \frac{(\mu x,\mu^2 q_2x)_\infty}{(q_1^{-1}x,q_3^{-1}x)_\infty}\,,
\end{align*}
the integral \eqref{bIn} can be rewritten in terms of the currents \eqref{E^n} as  
\begin{align*}
const.\ \bI_n&= 
\int\!\!\cdots \!\!\int
\Eb(z_1,\ldots, z_n)  \cdot \prod_{j\neq k}\sigma_2(z_k/z_j)
\prod_{j=1}^n\frac{dz_j}{z_j}\,.
\end{align*}

Consider the products
\begin{align*}
&\bI_1\bI_2=\int\!\!\int \!\!\int 
\Eb(z_1,z_2,z_3)\times f(z_2/z_1)^{-1}f(z_3/z_1)^{-1}\sigma_2(z_2/z_3)\sigma_2(z_3/z_2)
\prod_{j=1}^3\frac{dz_j}{2\pi i z_j}\,,\\
&\bI_2\bI_1= 
\int\!\!\int \!\!\int\Eb(z_1,z_2,z_3)\times f(z_1/z_2)^{-1}f(z_1/z_3)^{-1}\sigma_2(z_2/z_3)\sigma_2(z_3/z_2)\prod_{j=1}^3
\frac{dz_j}{2\pi i z_j}\,.
\end{align*}
The integral in $\bI_1\bI_2$ is initially defined for $|z_1|\gg|z_2|=|z_3|=1$, 
while in  $\bI_2\bI_1$ it is defined  for $|z_1|\ll |z_2|=|z_3|=1$.
In both cases we move the contour for $z_1$ to the unit circle. Along the way we 
pick up residues at the poles $z_1=q_2^{-1}z_i, \mu^{-1}q_2^{-1}z_i$  
or $z_1=q_2z_i, \mu q_2 z_i$, $i=2,3$, respectively.  

When all variables are on the unit circle, the two integrals coincide thanks to the identity \eqref{id-h}. 

Let us compare the residues at $z_1=q_2^{\pm1}z_3$. 
We obtain respectively
\begin{align*}
J_1=\int\!\!\int_{|z_2|=|z_3|=1}\Eb(z_2,q_2^{-1}z_3,z_3) f(q_2z_2/z_3)^{-1}\sigma_2(z_2/z_3)\sigma_2(z_3/z_2)
\prod_{i=2}^3\frac{d z_i}{2\pi i z_i}\,,\\
J_2=\int\!\!\int_{|z_2|=|z_3|=1}\Eb(z_2,z_3,q_2z_3) f(q_2z_3/z_2)^{-1}\sigma_2(z_2/z_3)\sigma_2(z_3/z_2)
\prod_{i=2}^3\frac{d z_i}{2\pi i z_i}\,.
\end{align*}
If we rename $z_3$ in $J_1$ to $q_2z_3$ (so that $q_2z_3$ is on the unit circle), then 
the two integrands become the same thanks to the identity
\begin{align}
f(x)^{-1}\sigma_2(q_2^{-1}x)=\sigma_2(x)\frac{(1-x)^2(1-q_2^{-1}x)^2}{(1-\mu x)(1-\mu^{-1}q_2^{-1}x)(1-q_1x)(1-q_3x)}\,.
\label{id1}
\end{align}
The integrand of $J_2$ has poles at (see Figure \ref{Fig1} below)
\begin{align*}
z_3=&\mu^mq_s^{-1}z_2\quad (s=1,3,\ m\ge0);\\
z_3=&\mu^{-m}q_sq_2^{-1}z_2\quad (s=1,3,\ m\ge1);\\
z_3=&q_sz_2\quad (s=1,3)\,.
\end{align*}
Among them the points $z_3=q_sz_2$
are inside the contour for $J_1$ (after renaming)
and outside that for $J_2$. 
However these poles are actually absent due to the zero condition \eqref{zero1}.  
Hence we have $J_1=J_2$.

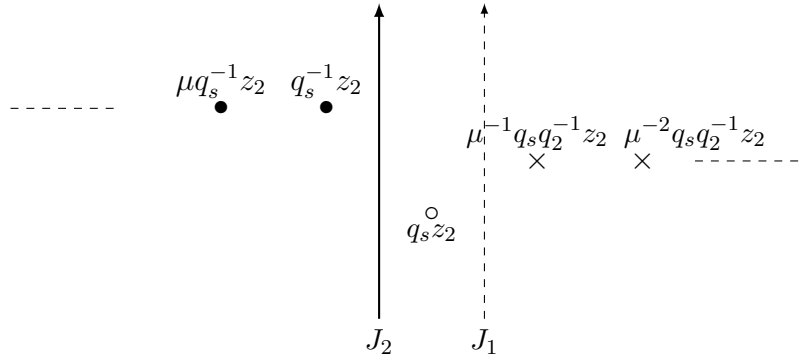
\begin{figure}[H]
\begin{center}
\begin{tikzpicture}[scale=0.7]
\draw[dashed] (-9,5)--(-7,5);
\node at (-5,5){$\bullet$};
\node at (-3,5){$\bullet$};
\node  [above] at (-5,5){$\mu q_s^{-1} z_2$};
\node [above] at (-3,5){$q_s^{-1} z_2$}; 

\node at (1,4){$\times$};
\node at (3,4){$\times$};
\draw[dashed] (4,4)--(6,4);
\node  [above] at  (1,4) {\small $\mu ^{-1} q_sq_2^{-1} z_2$};
\node  [above] at  (4,4) {\small $\mu ^{-2} q_sq_2^{-1} z_2$};

\node at (-1,3){$\circ$};
\node  [below] at  (-1,3) {\small $q_s z_2$};

\node [below] at (-2,1) {\small $J_2$};
\draw [-latex,thick] (-2,1)--(-2,7);

\node [below] at (0,1) {\small $J_1$};
\draw [-latex,dashed] (0,1)--(0,7);

\end{tikzpicture}
\caption{Integration contours on the $z_3$-plane ($s=1,3$)
for $J_1$, $J_2$.
\label{Fig1}}
\end{center}
\end{figure}

Next let us consider the residues at $z_1=(\mu q_2)^{\pm 1}z_3$. Similarly as above we obtain
\begin{align*}
J'_1=\int\!\!\int_{|z_2|=|z_3|=1}\Eb(z_2,\mu^{-1}q_2^{-1}z_3,z_3) f(\mu q_2 z_2/z_3)^{-1}\sigma_2(z_2/z_3)\sigma_2(z_3/z_2)
\prod_{i=2}^3\frac{d z_i}{2\pi i z_i}\,,\\
J'_2=\int\!\!\int_{|z_2|=|z_3|=1}\Eb(z_2,z_3,\mu q_2 z_3) f(\mu q_2 z_3/z_2)^{-1}\sigma_2(z_2/z_3)\sigma_2(z_3/z_2)
\prod_{i=2}^3\frac{d z_i}{2\pi i z_i}\,.
\end{align*}
We have another identity 
\begin{align}
f(x)^{-1}\sigma_2(\mu^{-1}q_2^{-1}x)=\sigma_2(x)\frac{(1-x)^2(1-\mu^{-1}q_2^{-1}x)^2}
{(1-\mu^{-1}q_1 x)(1-\mu^{-1}q_3 x)(1-q_1x)(1-q_3x)}\,.
\label{id2}
\end{align}
After renaming $z_3\to \mu q_2 z_3$ in $J_1'$, the two integrands become the same.
The integrand of $J_2'$ has poles at (see Figure \ref{Fig2})
\begin{align*}
z_3=&\mu^mq_s^{-1}z_2\quad (m\ge0, s=1,3);\\
z_3=&\mu^{-m}q_2^{-1}q_sz_2\quad (m\ge 1, s=1,3);\\
z_3=&q_s z_2,\ \mu^{-1}q_sz_2\quad (s=1,3)\,.
\end{align*}
The last ones
$z_3=q_sz_2,\ \mu^{-1}q_sz_2$ ($s=1,3$)
are inside the contour for $J_1'$, while they are outside for $J_2'$. 
Using the zero conditions \eqref{zero1},\eqref{zero2}, 
we conclude that $J_1'=J_2'$.\qed

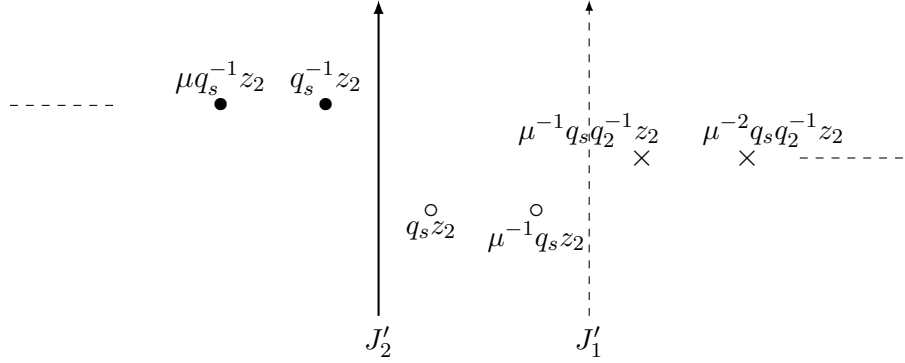
\begin{figure}[H]
\begin{center}
\begin{tikzpicture}[scale=0.7]
\draw[dashed] (-9,5)--(-7,5);
\node at (-5,5){$\bullet$};
\node at (-3,5){$\bullet$};
\node  [above] at (-5,5){$\mu q_s^{-1} z_2$};
\node [above] at (-3,5){$q_s^{-1} z_2$}; 

\node at (3,4){$\times$};
\node at (5,4){$\times$};
\draw[dashed] (6,4)--(8,4);
\node  [above] at  (2,4) {\small $\mu ^{-1} q_sq_2^{-1} z_2$};
\node  [above] at  (5.5,4) {\small $\mu ^{-2} q_sq_2^{-1} z_2$};

\node at (-1,3){$\circ$};
\node at (1,3){$\circ$};
\node  [below] at  (1,3) {\small $\mu^{-1}q_s z_2$};
\node  [below] at  (-1,3) {\small $q_s z_2$};

\node [below] at (-2,1) {\small $J'_2$};
\draw [-latex,thick] (-2,1)--(-2,7);

\node [below] at (2,1) {\small $J'_1$};
\draw [-latex,dashed] (2,1)--(2,7);

\end{tikzpicture}
\caption{Integration contours on the $z_3$-plane ($s=1,3$) for $J_1'$, $J_2'$.
\label{Fig2}}
\end{center}
\end{figure}

\medskip

\subsection{The general case.} 
We consider the general case $[\bI_m,\bI_n]=0$. 
Call the integration variables $z_1,\ldots,z_m$ for $\bI_m$ and $w_1,\ldots,w_n$ for $\bI_n$. 
We proceed in the same way, rewriting $\bI_m\bI_n$ and $\bI_n\bI_m$ as integrals over the unit circle and
picking residues with respect to some groups of variables.

First consider $\bI_m\bI_n$. 
In view of symmetry and zeros on the diagonal, 
it is sufficient to consider residues from 
\begin{align*}
&z_i=q_2^{-1}w_i\quad (1\le i\le k),\\
&z_i=\mu^{-1} q_2^{-1}w_i\quad (k+1\le i\le k+l).
\end{align*}
The result is (we write only the integrand)
\begin{align*}
&J_1=\Eb(\{q_2^{-1}w_i,w_i\}_{i=1}^k,\{\mu^{-1}q_2^{-1}w_i,w_i\}_{i=k+1}^{k+l},
\{z_j\}_{j=k+l+1}^m,\{w_j\}_{j=k+l+1}^n)
\times F_1G_1H_1\,,
\end{align*}
with
\begin{align*}
F_1&=\prod_{1\le i\neq j\le k}f(q_2w_j/w_i)^{-1}\sigma_2(w_j/w_i)^2
\prod_{k+1\le i\neq j\le k+l}f(\mu q_2w_j/w_i)^{-1}\sigma_2(w_j/w_i)^2\\
&\times
\prod_{1\le i\le k\atop k+1\le j\le k+l}f(q_2w_j/w_i)^{-1}\sigma_2(\mu^{-1}w_j/w_i)\sigma_2(w_i/w_j)
\prod_{k+1\le i\le k+l\atop 1\le j\le k}f(\mu q_2w_j/w_i)^{-1}\sigma_2(\mu w_j/w_i)\sigma_2(w_i/w_j)
\,,
\\
G_1
&=\hspace{-7pt}\prod_{j=k+l+1}^{m}\hspace{-3pt}
\Bigl(\prod_{1\le i\le k}f(w_i/z_j)^{-1}\sigma_2(q_2^{-1}w_i/z_j)\sigma_2(q_2z_j/w_i)
\hspace{-8pt}\prod_{k+1\le i\le k+l}\hspace{-8pt}f(w_i/z_j)^{-1}\sigma_2(\mu^{-1}q_2^{-1}w_i/z_j)\sigma_2(\mu q_2z_j/w_i)\Bigr)\\
&\times\prod_{j=k+l+1}^{n}
\Bigl(
\prod_{1\le i\le k}f(q_2w_j/w_i)^{-1}\sigma_2(w_j/w_i)\sigma_2(w_i/w_j)
\prod_{k+1\le i\le k+l}f(\mu q_2w_j/w_i)^{-1}\sigma_2(w_j/w_i)\sigma_2(w_i/w_j)\Bigr)
\\
H_1&=\prod_{k+l+1\le i\le m\atop k+l+1\le j\le n}f(w_j/z_i)^{-1}
\prod_{k+l+1\le i\neq j\le m}\sigma_2(z_j/z_i)
\prod_{k+l+1\le i\neq j\le n}\sigma_2(w_j/w_i)\,.
\end{align*}

For $\bI_n\bI_m$ we work similarly, picking poles at 
\begin{align*}
&z_i=q_2w_i\quad (1\le i\le k),\\
&z_i=\mu q_2 w_i\quad (k+1\le i\le k+l),
\end{align*}
ending up with 

\begin{align*}
&J_2=\Eb(\{q_2 w_i,w_i\}_{i=1}^k,\{\mu q_2 w_i,w_i\}_{i=k+1}^{k+l},
\{z_j\}_{j=k+l+1}^m,\{w_j\}_{j=k+l+1}^n)
\times F_2G_2H_2\,,
\end{align*}
where
\begin{align*}
F_2&=\prod_{1\le i\neq j\le k}f(q_2w_j/w_i)^{-1}\sigma_2(w_j/w_i)^2
\prod_{k+1\le i\neq j\le k+l}f(\mu q_2w_j/w_i)^{-1}\sigma_2(w_j/w_i)^2\\
&\times
\prod_{1\le i\le k\atop k+1\le j\le k+l}f(q_2w_i/w_j)^{-1}\sigma_2(w_i/w_j)\sigma_2(\mu w_j/w_i)
\prod_{k+1\le i\le k+l\atop 1\le j\le k}f(\mu q_2w_j/w_i)^{-1}\sigma_2(w_j/w_i)\sigma_2(\mu^{-1} w_i/w_j)
\,,
\\
G_2
&=\hspace{-8pt}\prod_{i=k+l+1}^{m}\hspace{-3pt}
\Bigl(\prod_{1\le j\le k}f(z_i/w_j)^{-1}\sigma_2(q_2w_j/z_i)\sigma_2(q_2^{-1}z_i/w_j)\hspace{-8pt}
\prod_{k+1\le i\le k+l}\hspace{-8pt}f(z_i/w_j)^{-1}\sigma_2(\mu q_2w_j/z_i)\sigma_2(\mu^{-1}q_2^{-1}z_i/w_j)\Bigr)\\
&\times\prod_{j=k+l+1}^{n}
\Bigl(
\prod_{1\le i\le k}f(q_2w_i/w_j)^{-1}\sigma_2(w_i/w_j)\sigma_2(w_j/w_i)
\prod_{k+1\le i\le k+l}f(\mu q_2w_i/w_j)^{-1}\sigma_2(w_i/w_j)\sigma_2(w_j/w_i)\Bigr)\,,
\\
H_2&=\prod_{k+l+1\le i\le m\atop k+l+1\le j\le n}f(z_i/w_j)^{-1}
\prod_{k+l+1\le i\neq j\le m}\sigma_2(z_j/z_i)
\prod_{k+l+1\le i\neq j\le n}\sigma_2(w_j/w_i)\,.
\end{align*}
Using the identities \eqref{id1},\eqref{id2} one can check that
\begin{align*}
J_1\bigr|_{w_i\to q_2w_i(1\le i\le k),\ w_j\to \mu q_2 w_j\ (k+1\le j\le k+l)}=J_2\,.
\end{align*}
The contours can be chosen to be the unit circle 
by the same mechanism observed above.
It remains to show that under symmetrization with respect to $\{z_i\}_{i=k+l+1}^m$
and  $\{w_j\}_{j=k+l+1}^n$ we have
\begin{align*}
\mathop{\mathrm{Sym}}H_1=\mathop{\mathrm{Sym}}H_2\,.
\end{align*}
This reduces to identity \eqref{id-h}. The proof of Theorem \ref{mu q_2} is now complete.\qed

\section{$K$ matrices}\label{sec:Kmat}
It is well known that the Hopf algebra $\E$ is equipped with the universal R matrix, which gives rise to
an intertwiner of $\E$ modules
\begin{align}
\Rc(u_1/u_2):\F_2(u_1)\otimes\F_2(u_2)\to \F_2(u_2)\otimes\F_2(u_1)\,. \label{Rmat}
\end{align}
Since $\Rc(u_1/u_2)$ commutes with the diagonal Heisenberg $\Delta h_r$, it is written in 
terms of a single boson $\{\ssa^\ssA_r\}$ entering the root current 
$A(z)=\,\,:\Lambda_1(s_2z)\Lambda_2(s_2z)^{-1}:$. Exhibiting the dependence on parameters we shall write
\begin{align*}
\Rc(u_1/u_2)=\Rc(u_1/u_2;q_1,q_2,q_3,\{\ssa^\ssA_r\})\,.
\end{align*}

\begin{prop}\label{Kmat}
Consider a $\cK$ module $\F_2(u)\otimes \F^X_c$ ($X=\ssB,\ssCD$), 
and let $A(z)=\,\,:e^{\sum_{r\in\Z}\ssa_rz^{-r}}:$
be the root current associated with $E(z)$.
Then there exists an intertwiner of $\cK$ modules
\begin{align*}
&\ssK(u):\F_2(u)\otimes\F^{X}_c\to \F_2(u^{-1})\otimes\F^{X}_c\,
\end{align*}
in the following cases.
\begin{align}
\text{type $\ssB$}&\quad(X=\ssB, c=3):&& 
\ssK(u)=\Rc(u;q_1,q_2q_3^{1/2},q_3^{1/2},\{\ssa_r\})\,, \label{KB}
\\
\text{type $\ssC$}&\quad(X=\ssCD, c=3):&& 
\ssK(u)=\Rc(u^2;q_1,q_2q_3^{-1},q_3^2,\{\ssa_r\})\,,\label{KC}
\\
\text{type $\ssD$}&\quad (X=\ssCD, c=2):&& 
\ssK(u)=(-1)^{\ssN}\,.\label{KD}
\end{align}
In the last line, 
$\ssN$ denotes the number operator $\sum_{r>0}\nu_r^{-1}\ssa_{-r}\ssa_r$,  $\nu_r=[\ssa_r,\ssa_{-r}]$.
Note that $\ssK(u)$ is independent of $u$ in this case.
\end{prop}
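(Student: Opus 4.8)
The plan is to treat types $\ssB,\ssC$ on one hand and type $\ssD$ on the other, since in the first two cases $\ssK(u)$ is built from the $\E$ $R$ matrix \eqref{Rmat} while in the last it is a mere sign. Throughout I would work in the two bosons of the module $\F_2(u)\otimes\F^X_c$: the diagonal combination $\Delta^{(1)}H_r$ through which $K^\pm(z)$ acts (by \eqref{comodule formula} and Corollary \ref{include}), and the complementary boson $\{\ssa_r\}_{r\neq0}$ of the root current $A(z)$, which by construction commutes with the diagonal one. Since the stated $\ssK(u)$ depends only on $\{\ssa_r\}$, it commutes with every $H_r$, and because $K^\pm(z)$ is $u$-independent and acts identically on source and target, $\ssK(u)$ commutes with $K^\pm(z)$ automatically. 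Thus the only substantive requirement is the intertwining relation $\ssK(u)\,E(z)=E(z)'\,\ssK(u)$, where $E(z)$ and $E(z)'$ denote the current on $\F_2(u)\otimes\F^X_c$ and on $\F_2(u^{-1})\otimes\F^X_c$ respectively.

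For types $\ssB$ and $\ssC$ the strategy is to identify the single-boson structure of $E(z)$ with that of the type $\ssA$ current governed by $\Rc$. Recall that $\Rc(v_1/v_2;Q_1,Q_2,Q_3,\{\ssa^\ssA_r\})$ is characterized, up to normalization, as the operator in the root-current boson that intertwines $e(z)=b\bigl(\Lambda_1^\ssA(z)+\Lambda_2^\ssA(z)\bigr)$ on $\F_2^Q(v_1)\otimes\F_2^Q(v_2)$ while swapping the evaluation parameters. I would first compute, in the boson $\{\ssa_r\}$, the mutual contractions of the constituents $\Lambda_1(z),\Lambda_{\bar1}(z)$ (and $\Lambda_0(z)$ for type $\ssB$) of $E(z)$ from \eqref{LaContractionBCD}, and check that they coincide term by term with the contractions of $\Lambda_1^\ssA,\Lambda_2^\ssA$ in the modified parameters $Q=(q_1,q_2q_3^{1/2},q_3^{1/2})$ for type $\ssB$ and $Q=(q_1,q_2q_3^{-1},q_3^2)$ for type $\ssC$ (both satisfying $Q_1Q_2Q_3=1$). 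In parallel I would track the zero modes: the boundary acts as a mirror, so that the genuine module $\F_2(u)$ together with $\F^X_c$ carries the data of the two virtual modules $\F_2^Q(v_1)$ and $\F_2^Q(v_2)$, and the reflection $u\mapsto u^{-1}$ must match the swap $v_1\leftrightarrow v_2$. The zero-mode computation should then fix $v_1/v_2=u$ in case $\ssB$ and $v_1/v_2=u^2$ in case $\ssC$, explaining the arguments in \eqref{KB} and \eqref{KC}. Once the contractions and zero-mode data agree, the defining intertwining relation for $\Rc$ transcribes verbatim into the required relation for $\ssK(u)$ on $\Lambda_1(z)+\Lambda_{\bar1}(z)$.

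I expect the main obstacle to be the boundary term $\Lambda_0(z)$ present in type $\ssB$: the three-term current $b_2\Lambda_1+k^\ssB_c\Lambda_0+b_2\Lambda_{\bar1}$ is not literally a two-term type $\ssA$ current, and one must show the same $\ssK(u)$ also intertwines the middle term. Here I would use the fusion mechanism already invoked at the end of the proof of Theorem \ref{B screening thm}: the half-integer level $s_c^{1/2}$ of $\F^\ssB_c$ makes $\Lambda_1,\Lambda_0,\Lambda_{\bar1}$ a fusion of two copies of a two-term configuration, with $\Lambda_0$ the intermediate state, so that the $K$ matrix is realized as a fusion of $\Rc(u;Q)$. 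Concretely one verifies the operator-product relations, as in the computation of Theorem \ref{B screening thm}, that tie $\Lambda_0$ to $\Lambda_1$ and $\Lambda_{\bar1}$, and checks that they are preserved by $\Rc(u;Q)$; the compatibility of the normalization of $\Rc$ with the explicit constants $b_2,k^\ssB_c$ is the genuinely delicate bookkeeping.

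Finally, type $\ssD$ ($X=\ssCD$, $c=2$, so $C=1$ and $E(z)=e(s_2z)\otimes K^+(z)+\ft(s_2^{-1}z)\otimes K^-(z)$ with the boundary contributing $E\mapsto0$) is handled directly. Since $\ssN=\sum_{r>0}\nu_r^{-1}\ssa_{-r}\ssa_r$ conjugates $\ssa_r\mapsto-\ssa_r$, the operator $(-1)^\ssN$ sends $A(z)$ to its inverse, that is, it exchanges the two terms $\Lambda_1(z)$ and $\Lambda_{\bar1}(z)$ of $E(z)$; I would check that this exchange is exactly the passage from $u$ to $u^{-1}$ at the level of the zero modes, so that $(-1)^\ssN$ intertwines $E(z)$. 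This is the degenerate self-dual point where $\Rc$ collapses to a sign, which is why $\ssK(u)$ is here independent of $u$; the relations $\ssK_\ell A_{\ell-1}(z)\ssK_\ell=A_\ell(z)$ and $\ssK_\ell^2=1$ used in Section \ref{sec KZ} then follow immediately.
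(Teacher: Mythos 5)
Your proposal follows essentially the same route as the paper: extract the diagonal Heisenberg so that only the root-current boson $\{\ssa_r\}$ matters, match the resulting two-term intertwining relation with the reduced type $\ssA$ relation for $\Rc$ in the modified parameters (with the zero-mode bookkeeping giving the arguments $u$ resp. $u^2$), treat the three-term type $\ssB$ current by the fusion/square-root construction $\Lambda_{\pm\pm}(z)=\,\,:\Lambda_\pm(s_3^{1/2}z)\Lambda_\pm(s_3^{-1/2}z):$, $\Lambda_0(z)=\,\,:\Lambda_+(s_3^{1/2}z)\Lambda_-(s_3^{-1/2}z):$, and observe that type $\ssD$ degenerates to $\ssK\ssa_r=-\ssa_r\ssK$, i.e.\ the sign operator $(-1)^{\ssN}$. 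The plan is correct and matches the paper's proof in all essential points.
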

\begin{proof}
First note that, by extracting the diagonal Heisenberg, the intertwining relation \eqref{Rmat} for type $\ssA$
reduces to 
\begin{align*}
\Rc(u_1/u_2)\bigl(u_1\Lambda^\ssA_+(z)+u_2\Lambda^\ssA_-(z)\bigr)
=\bigl(u_2\Lambda^\ssA_+(z)+u_1\Lambda^\ssA_-(z)\bigr)\Rc(u_1/u_2)\,,
\end{align*}
where $\Lambda^\ssA_\pm(z)$ are vertex operators in $\{\ssa^\ssA_r\}_{r\neq0}$ such that
\begin{align}
&\Lambda^\ssA_-(z)=\,\,:\Lambda^\ssA_+(q_2z)^{-1}:\,,
\label{caseA1}\\
&\mathcal{C}\bigl(\Lambda^\ssA_+(z),\Lambda^\ssA_+(w)\bigr)=-\frac{(1-q_3)(1-q_1)}{1+q_2}q_2\,.
\label{caseA2}
\end{align}

For type $\ssC$ and type $\ssD$ we proceed the same way as in type $\ssA$ to obtain the reduced intertwining relation
\begin{align*}
&\ssK(u)\bigl(u \Lambda_+(z)+u^{-1}\Lambda_-(z)\bigr)=\bigl(u^{-1}\Lambda_+(z)+u\Lambda_-(z)\bigr)\ssK(u)\,,
\end{align*}
where $\Lambda_\pm(z)$ are vertex operators  in $\{\ssa_r\}_{r\neq0}$ 
such that 
\begin{align}
&\Lambda_-(z)=\,\,:\Lambda_+(q_2q_c^{-1}z)^{-1}:\,.
\label{caseBCD1}
\end{align}
For type $\ssC$ ($c=3$), we have further
\begin{align}
\mathcal{C}\bigl(\Lambda_+(z),\Lambda_+(w)\bigr)=-\frac{(1-q^2_3)(1-q_1)}{1+q_2q_3^{-1}}q_2q_3^{-1}\,.
\label{caseBCD2}
\end{align}
Comparing \eqref{caseBCD1}, \eqref{caseBCD2} with \eqref{caseA1}, \eqref{caseA2}, we obtain \eqref{KC}.

For type $\ssD$ ($c=2$),  \eqref{caseBCD1} becomes $\Lambda_-(z)=\,\,:\Lambda_+(z)^{-1}:$, so that the intertwining relation 
reduces further to $\ssK(u)\ssa_r=-\ssa_r\ssK(u)$ for all $r\neq0$.
This leads to the solution \eqref{KD}.

For type $\ssB$, the reduced intertwining relation involves three terms
\begin{align*}
\ssK(u)\bigl(u\Lambda_{++}(z)+k\Lambda_0(z)+u^{-1}\Lambda_{--}(z)\bigr)
=\bigl(u^{-1}\Lambda_{++}(z)+k\Lambda_0(z)+u\Lambda_{--}(z)\bigr)\ssK(u)\,,
\end{align*}
which corresponds to the $qq$ character of the three dimensional representation of $U_q\widehat{\mathfrak{sl}}_2$.
One can reduce it further to intertwining relation for the two dimensional one 
\begin{align*}
&\ssK(u)\bigl(u^{1/2} \Lambda_+(z)+u^{-1/2}\Lambda_-(z)\bigr)=\bigl(u^{-1/2}\Lambda_+(z)+u^{1/2}\Lambda_-(z)\bigr)\ssK(u)\,
\end{align*}
by introducing $\Lambda_\pm(z)$ such that $\Lambda_{\pm\pm}(z)=\,\,:\Lambda_\pm(s_3^{1/2}z)\Lambda_\pm(s_3^{-1/2}z):$, 
$\Lambda_0(z)=\,\,:\Lambda_+(s_3^{1/2}z)\Lambda_-(s_3^{-1/2}z):$. 
The rest is similar to type $\ssC$.
\end{proof}
\medskip

On tensor products \eqref{B-mod}, \eqref{C-mod}, we write the intertwiners as
$\Rc_{i,i+1}(u_i/u_{i+1})$ ($i=1,\ldots,\ell-1$) indicating the tensor components where they act non-trivially. 
For the intertwiners involving boundary Fock modules we write as $\ssK_\ell(u_\ell)$. 
The standard argument (with $\ell=2$) leads to the boundary Yang-Baxter equation 
\begin{align}
&\Rc_{1,2}(u_1/u_2)\ssK_2(u_1)\Rc_{1,2}(u_1u_2)\ssK_2(u_2)
=\ssK_2(u_2)\Rc_{1,2}(u_1u_2)\ssK_2(u_1)\Rc_{1,2}(u_1/u_2)\,.
\label{RKRK}
\end{align}

As noted above, the K matrix $\ssK_\ell$ of type $\ssD$ is independent of $u_\ell$ and 
satisfies $\ssK^2_\ell=1$. 
Comparing the intertwining relation with the definition of the currents $A_{\ell-1}(z)$ \eqref{A-i}
and $A_\ell(z)$ \eqref{D-ell}, we see that 
\begin{align*}
\ssK_\ell A_{\ell-1}(z)=A_\ell(z) \ssK_\ell\,.
\end{align*}

Though the zeroth node of the Dynkin diagram is not associated with boundary Fock modules, one can consider 
K matrices depending only on $A_0(z)$ and satisfying the intertwining relations, for example 
for type $\ssC$ 
\begin{align*}
\ssK_1(u_1)\bigl(\bs\Lambda_{\bar 1}(z)+\bs\Lambda_1(\mu z)\bigr)=
\bigl(u_1^2\bs\Lambda_{\bar 1}(z)+u_1^{-2}\bs\Lambda_1(\mu z)\bigr)\ssK_1(u_1)\,.
\end{align*}
\bigskip

\section{The library of Cartan matrices}\label{library}

\subsection{Conventions}\label{sec library conventions}
The matrix of contractions $\hat B$ is the deformed version of the symmetrized Cartan matrix. We will give a list of explicit deformed Cartan matrices $\hat C$ of low rank which can be used to write all others as explained in Appendix \ref{sec general}.

The deformed symmetrized Cartan matrix $\hat B$ and the deformed Cartan matrix $\hat C$ are related by a diagonal matrix $\hat D$, namely $\hat B=\hat D\hat C$, where the diagonal entries $d_i$ of $\hat B$ and the diagonal entries of $\hat C$ are given as follows.

The nodes of type $\ssA$ (corresponding to $\F_{c_i}\otimes \F_{c_{i+1}}$).
\begin{align*}
\ssA,\ c_i=c_{i+1}:&\quad d_i=-\frac{t_1t_2t_3}{t_{c_i}},\quad &&C_{i,i}=s_{c_i}+s_{c_i}^{-1}.
&&
\begin{tikzpicture}[baseline=(origin.base)] 
\dynkin[root radius=.1cm, edge length=1.3cm, labels={c_i}]{A}{o}
\end{tikzpicture}
\\
\ssA,\ c_i\neq c_{i+1}:&\quad d_i=t_b,\quad &&C_{i,i}=t_b\quad (b\neq c_i,c_{i+1}).
&&
\begin{tikzpicture}[baseline=(origin.base)] 
\dynkin[root radius=.1cm, edge length=1.3cm, labels={b}]{A}{t}
\end{tikzpicture}
\end{align*}

\bigskip

    The nodes of type $\ssB$  (corresponding to $\F_{c_{\ell}}\otimes \F^{\ssB}_{c_{\ell+1}}$).
\begin{align*}
\ssB, \ c_\ell=c_{\ell+1}:&\quad d_\ell=-\frac{t_1t_2t_3}{t_{c_{\ell+1}}(s^{1/2}_{c_{\ell+1}}+s^{-1/2}_{c_{\ell+1}})}, \quad &&C_{\ell,\ell}=s_{c_{\ell+1}}^{3/2}+s_{c_{\ell+1}}^{-3/2}.
&&
\begin{tikzpicture}[baseline=(origin.base)] 
\dynkin[root radius=.1cm, edge length=1.3cm, labels={c_\ell}]{A}{*}
\end{tikzpicture}
\\
\ssB, \ c_\ell\neq c_{\ell+1}:&\quad d_\ell=-t_b(s^{1/2}_{c_{\ell+1}}-s^{-1/2}_{c_{\ell+1}}),\quad 
&&
C_{\ell,\ell}=s_{c_{\ell}}^{1/2}s^{-1/2}_{b}+s^{-1/2}_{c_{\ell}}s^{1/2}_{b}
\quad (b\neq c_\ell,c_{\ell+1}).
&&
\begin{tikzpicture}[baseline=(origin.base)] 
\dynkin[root radius=.1cm, edge length=1.3cm, labels={b}]{A}{o}
\end{tikzpicture}
\end{align*}

The nodes of type $\ssC$  (corresponding to $\F_{c_\ell}\otimes \F^{\ssC\ssD}_{c_{\ell+1}}$, $c_{\ell+1}\neq c_{\ell}$).

\begin{align*}
\ssC, \ c_\ell\neq c_{\ell+1}:&\quad d_\ell=-t_b(s^2_{c_{\ell+1}}-s^{-2}_{c_{\ell+1}}),\qquad 
C_{\ell,\ell}=s_{c_{\ell+1}}s^{-1}_{c_{\ell}}+s^{-1}_{c_{\ell+1}l}s_{c_{\ell}}\quad (b\neq c_\ell,c_{\ell+1}).
&&
\begin{tikzpicture}[baseline=(origin.base)] 
\dynkin[root radius=.1cm, edge length=1.3cm, labels={b}]{A}{o}
\end{tikzpicture}
\end{align*}
\bigskip

The nodes of type $\ssD$ 
(corresponding to $\F_{c_{\ell-1}}\otimes\F_{c_\ell}\otimes \F^{\ssC\ssD}_{c_{\ell+1}}$,  $c_{\ell+1}=c_{\ell}$).   
\begin{align*}
\ssD, \ c_{\ell+1}=c_{\ell-1}:&\quad d_\ell=-\frac{t_1t_2t_3}{t_{c_{\ell+1}}},\quad C_{\ell,\ell}=s_{c_{\ell+1}}+s_{c_{\ell+1}}^{-1} &&
\begin{tikzpicture}[baseline=(origin.base)] 
\dynkin[root radius=.1cm, edge length=1.3cm, labels={c_\ell}]{A}{o}
\end{tikzpicture}  \\
\ssD, \ c_{\ell+1}\neq c_{\ell-1}:&\quad d_{\ell}=t_b,
\quad C_{\ell,\ell}=t_b \quad (b\neq c_{\ell+1},c_{\ell-1})
&&
\begin{tikzpicture}[baseline=(origin.base)] 
\dynkin[root radius=.1cm, edge length=1.3cm, labels={b}]{A}{t}
\end{tikzpicture}  
\\
\end{align*}
\bigskip

Affine nodes (corresponding to dressing in non $\ssA$ types) are the same as $\ssB, \ssC,\ssD$ nodes after the change
$$c_i\to c_{\ell+1-i}, \qquad d_i\to d_{\ell-i},\qquad C_{i,i}\to C_{\ell-i,\ell-i}.$$

\bigskip

The deformed affine Cartan matrices we obtain have ``local" form: $C_{i,j}=0$ if $|i-j|$ is sufficiently large. Moreover, the non-zero terms stabilize with $\ell$ increased. We give here a number of explicit deformed Cartan matrices of low rank which can be used to write all others as explained in Appendix \ref{sec general}.

We use the following notation. We write $X(c_0;c_1,\dots,c_{\ell-1};c_{\ell+1})Y$ for the choice of the module and the dressing and give the $(\ell+1)\times(\ell+1)$ matrix $\hat C$. Here $Y$ can be $\ssB,\ssC$, or $\ssD$ depending on which boundary module we consider $\F_{c_{\ell+1}}^{\ssB}$ or $\F_{c_{\ell+1}}^{\ssC\ssD}$. As always, in the latter case we choose $\ssD$ if $c_{\ell+1}=c_\ell$ and $\ssC$ otherwise. Similarly $X$ can be $\ssB,\ssC$, or $\ssD$ depending on which dressing we choose. Namely $C^2/\mu=q_{c_0}^{-1/2}$ corresponds to $X=\ssB$ while  $C^2/\mu=q_{c_0}$ corresponds to  $X=\ssC$, if $c_0\neq c_1$ and to $X=\ssD$ if $c_0=c_1$.

We skip writing $X$ and $c_0$ in finite types. In addition we also skip $c_{\ell+1}$ and $Y$ in $\ssA$ type.

\subsection{Finite types}\label{sec finite}
\hspace{20pt}
\newline

\noindent{\bf Type $\ssA$.}
\begin{align*}
(2,2,2) &&\left(\begin{matrix}  s_2+s_2^{-1} & -1\\
-1&  s_2+s_2^{-1}
\end{matrix}\right) \\ 
(1,2,3)& & \left(\begin{matrix} t_3 & t_1 \\
t_3 &t_1
\end{matrix}\right)\qquad \\
(1,2,1)& &\left(\begin{matrix} t_3 & t_1 \\
t_1 & t_3
\end{matrix}\right)\qquad \\
(2,2,1)& &\qquad \left(\begin{matrix} s_2+s_2^{-1} & -1 \\
t_1 &t_3
\end{matrix}\right)\ \\ 
(1,2,2)&& \left(\begin{matrix} t_3 & t_1 \\
-1 &s_2+s_2^{-1}
\end{matrix}\right)  \ 
\end{align*}
\medskip

\noindent{\bf Type $\ssB$.}
\begin{align*}
(2,2;2)\ssB &&
\begin{pmatrix}
s_2+s_2^{-1}& -1\\
-s_2^{1/2}-s_2^{-1/2}& s_2^{3/2}+s_2^{-3/2}\\
\end{pmatrix}\quad 
\\
(2,2;3)\ssB &&
\begin{pmatrix}
s_2+s_2^{-1}& -1\\
-s_3^{1/2}-s_3^{-1/2}& 
s_1^{1/2}s_2^{-1/2}+s_1^{-1/2}s_2^{1/2} \\
\end{pmatrix}
\\
(1,2;2)\ssB&&
\begin{pmatrix}
t_3& t_1\\
-s_2^{1/2}-s_2^{-1/2}& s_2^{3/2}+s_2^{-3/2}\\
\end{pmatrix}\quad
\\
(1,2;1)\ssB &&
\begin{pmatrix}
t_3& t_1\\
-s_1^{1/2}-s_1^{-1/2}& 
s_3^{1/2}s_2^{-1/2}+s_3^{-1/2}s_2^{1/2} \\
\end{pmatrix}
\\
(1,2;3)\ssB&&
\begin{pmatrix}
t_3& t_1\\
-s_3^{1/2}-s_3^{-1/2}&
s_1^{1/2}s_2^{-1/2}+s_1^{-1/2}s_2^{1/2} \\
\end{pmatrix}
\end{align*}
\bigskip

\noindent{\bf Type $\ssC$.}\quad 

\begin{align*}
(2,2;3)\ssC &&
\begin{pmatrix}
s_2+s_2^{-1}& -s_3-s_3^{-1}\\
-1& s_2s_3^{-1}+s_2^{-1}s_3
\\
\end{pmatrix}
\\
(1,2;1)\ssC&&
\begin{pmatrix}
t_3& s^2_1-s_1^{-2}\\
-1&s_2s_1^{-1}+s_2^{-1}s_1
\\
\end{pmatrix}\ \ 
\\
(1,2;3)\ssC&&
\begin{pmatrix}
t_3& -t_1(s_3+s_3^{-1})\\
-1&s_2s_3^{-1}+s_2^{-1}s_3
\\
\end{pmatrix}\ \ 
\end{align*}
\bigskip

\noindent{\bf Type $\ssD$.}\quad 

\begin{align*}
(2,2,2;2)\ssD&&
\begin{pmatrix}
s_2+s_2^{-1}& -1 &-1\\
-1& s_2+s_2^{-1}&0\\
-1& 0&s_2+s_2^{-1}\\
\end{pmatrix} \qquad\ \ 
\\
(1,2,2;2)\ssD&&
\begin{pmatrix}
t_3& t_1 &t_1\\
-1& s_2+s_2^{-1}&0\\
-1& 0&s_2+s_2^{-1}\\
\end{pmatrix} \qquad \ \ \ \ 
\\
(2,1,2;2)\ssD&&
\begin{pmatrix}
t_3&t_2&t_2\\
t_2& t_3&s_1s_2^{-1}-s_1^{-1}s_2\\
t_2&s_1s_2^{-1}-s_1^{-1}s_2&t_3\\
\end{pmatrix}\ \ \ \ 
\\
(1,1,2;2)\ssD &&
\begin{pmatrix}
s_1+s_1^{-1}& -1 & -1\\
t_2& t_3&s_1s_2^{-1}-s_1^{-1}s_2\\
t_2&s_1s_2^{-1}-s_1^{-1}s_2& t_3\\
\end{pmatrix}
\\
(3,1,2;2)\ssD&&
\begin{pmatrix}
t_2& t_3& t_3\\
t_2&t_3 & s_1s_2^{-1}-s_1^{-1}s_2\\
t_2& s_1s_2^{-1}-s_1^{-1}s_2&t_3\\
\end{pmatrix}\ \ \ \ 
\end{align*}

\subsection{Affine types}\label{sec affine}
\hspace{20pt}
\newline

\noindent{\it \bf $2\times 2$ cases.}

\medskip 

{\bf $\ssB-\ssB$ types.}

\begin{align*}
\ssB(2;2;2)\ssB &&
&&
\begin{pmatrix}
s_2^{3/2}+s_2^{-3/2}& -s_2^{1/2}-s_2^{-1/2}\\
-s_2^{1/2}-s_2^{-1/2}& s_2^{3/2}+s_2^{-3/2}\\
\end{pmatrix}\qquad \ \ 
\\
\ssB(2;2;1)\ssB&&
&&
\begin{pmatrix}
s_2^{3/2}+s_2^{-3/2}& -s_2^{1/2}-s_2^{-1/2}\\
-s^{1/2}_1-s_1^{-1/2}& s_2^{1/2}s_3^{-1/2}+s_2^{-1/2}s_3^{1/2}\\
\end{pmatrix}\quad \ 
\\
\ssB(1;2;1)\ssB&&
&&
\begin{pmatrix}
s_2^{1/2}s_3^{-1/2}+s_2^{-1/2}s_3^{1/2}& -s^{1/2}_1-s_1^{-1/2}\\
-s^{1/2}_1-s_1^{-1/2}& s_2^{1/2}s_3^{-1/2}+s_2^{-1/2}s_3^{1/2}\\
\end{pmatrix}
\\
\ssB(1;2;3)\ssB&&
&&
\begin{pmatrix}
s_2^{1/2}s_3^{-1/2}+s_2^{-1/2}s_3^{1/2}& -s^{1/2}_1-s_1^{-1/2}\\
-s^{1/2}_3-s_3^{-1/2}& 
s_2^{1/2}s_1^{-1/2}+s_2^{-1/2}s_1^{1/2}\\
\end{pmatrix}
\end{align*}
\bigskip
{\bf $\ssB-\ssC$ types.}
\begin{align*}
\ssB(2;2;1)\ssC &&
&&
\begin{pmatrix}
s_2^{3/2}+s_2^{-3/2}& -(s_1+s_1^{-1})(s_2^{1/2}+s_2^{1/2})\\
-1& s_1s^{-1}_2+s^{-1}_1s_2\\
\end{pmatrix}\qquad
\\
\ssB(1;2;1)\ssC&&
&&
\begin{pmatrix}
s_2^{1/2}s_3^{-1/2}+s_2^{-1/2}s_3^{1/2}& -(s_1+s_1^{-1})(s^{1/2}_1+s_1^{-1/2})\\
-1& s_1s^{-1}_2+s^{-1}_1s_2\\
\end{pmatrix}
\\
\ssB(3;2;1)\ssC&&
&&
\begin{pmatrix}
s_1^{1/2}s_2^{-1/2}+s_1^{-1/2}s_2^{1/2}& -(s_1+s_1^{-1})(s^{1/2}_3+s_3^{-1/2})\\
-1& s_1s^{-1}_2+s^{-1}_1s_2\\
\end{pmatrix}
\end{align*}
{\bf $\ssC-\ssC$ types.}
\begin{align*}
\ssC(1;2;1)\ssC&&
&&
\begin{pmatrix}
s_1s^{-1}_2+s_1^{-1}s_2& -s_1-s_1^{-1}\\
-s_1-s_1^{-1}& s_1s^{-1}_2+s_1s_2^{-1}\\
\end{pmatrix}\qquad
\\
\ssC(1;2;3)\ssC&&
&&
\begin{pmatrix}
s_1s^{-1}_2+s_1^{-1}s_2& -s_3-s_3^{-1}\\
-s_1-s_1^{-1}& s_3s^{-1}_2+s_3s_2^{-1}\\
\end{pmatrix}\qquad
\end{align*}

\noindent{\it \bf $3\times 3$ cases.}

\medskip

{\bf $\ssB-\ssD$ types.}
\begin{align*}
\ssB(2;2,2;2)\ssD&&\quad 
\begin{pmatrix}
s_2^{3/2}+s_2^{-3/2}& -s_2^{1/2}-s_2^{-1/2}  &  -s_2^{1/2}-s_2^{-1/2} \\
-1 & s_2+s_2^{-1}& 0\\
-1 & 0  &  s_2+s_2^{-1}\\
\end{pmatrix}\qquad 
\\
\ssB(1;2,2;2)\ssD &&\quad 
\begin{pmatrix}
s_2^{1/2}s_3^{-1/2}+s_2^{-1/2}s_3^{1/2}& -s_1^{1/2}-s_1^{-1/2}  &  -s_1^{1/2}-s_1^{-1/2} \\
-1 & s_2+s_2^{-1}& 0\\
-1 & 0  &  s_2+s_2^{-1}\\
\end{pmatrix}
\\
\ssB(2;1,2;2)\ssD &&\quad 
\begin{pmatrix}
s_1^{1/2}s_3^{-1/2}+s_1^{-1/2}s_3^{1/2}& -s_2^{1/2}-s_2^{-1/2}  &  -s_2^{1/2}-s_2^{-1/2}\\
t_2 & t_3 & s_1s_2^{-1}-s_1^{-1}s_2\\
t_2 & s_1s_2^{-1}-s_1^{-1}s_2  &  t_3\\
\end{pmatrix}
\\
\ssB(1;1,2;2)\ssD&&\quad 
\begin{pmatrix}
s_1^{3/2}+s_1^{-3/2}& -s_1^{1/2}-s_2^{-1/2}  &  -s_1^{1/2}-s_1^{-1/2} \\
t_2 & t_3 & s_1s_2^{-1}-s_1^{-1}s_2\\
t_2 &  s_1s_2^{-1}-s_1^{-1}s_2 &  t_3 \\
\end{pmatrix}\qquad 
\\
\ssB(1;3,2;2)\ssD&&\quad 
\begin{pmatrix}
s_2^{1/2}s_3^{-1/2}+s_2^{-1/2}s_3^{1/2}& -s_1^{1/2}-s_1^{-1/2}  &  -s_1^{1/2}-s_1^{-1/2} \\
t_2 & t_1 & s_3s_2^{-1}-s_3^{-1}s_2\\
t_2 &  s_3s_2^{-1}-s_3^{-1}s_2 &  t_1\\
\end{pmatrix}
\end{align*}

{\bf $\ssC-\ssD$ types.}

\begin{align*}
\ssC(1;2,2;2)\ssD&&\quad 
\begin{pmatrix}
s_1s_2^{-1}+s^{-1}_1s_2& -1  &  -1 \\
-s_1-s_1^{-1} & s_2+s_2^{-1}& 0\\
-s_1-s_1^{-1} & 0  &  s_2+s_2^{-1}\\
\end{pmatrix}\qquad \qquad
\\
\ssC(1;3,2;2)\ssD&&\quad 
\begin{pmatrix}
s_1s_3^{-1}+s^{-1}_1s_3& -1  &  -1 \\
(s_1+s_1^{-1})t_2 & t_1 &s_2^{-1}s_3-s_2s_3^{-1}\\
(s_1+s_1^{-1})t_2 & s_2^{-1}s_3-s_2s_3^{-1} &  t_1 \\
\end{pmatrix}
\\
\ssC(2;1,2;2)\ssD &&\quad 
\begin{pmatrix}
s_1s_2^{-1}+s^{-1}_1s_2& -1  &  -1 \\
s^2_2-s_2^{-2} & t_3 &s_2^{-1}s_1-s_2s_1^{-1}\\
s^2_2-s_2^{-2} & s_2^{-1}s_1-s_2s_1^{-1} &  t_3\\
\end{pmatrix}\quad
\end{align*}

{\bf $\ssD-\ssD$ types.}

\begin{align*}
\ssD(2;2,2;2)\ssD &&\quad 
\begin{pmatrix}
s_2+s_2^{-1}& 0  &  -s_2-s_2^{-1} \\
0        &s_2+s_2^{-1}    &0\\
-s_2-s_2^{-1} & 0  & s_2+s_2^{-1}\\
\end{pmatrix}\qquad\quad
\\
\ssD(1;1,2;2)\ssD &&\quad 
\begin{pmatrix}
t_3& s_1^{-1}s_2-s_1s_2^{-1}  & -t_3 \\
 s_1^{-1}s_2-s_1s_2^{-1}         & t_3   &-s_1^{-1}s_2+s_1s_2^{-1}\\
-t_3 & -s_1^{-1}s_2+s_1s_2^{-1}  & t_3\\
\end{pmatrix}
\end{align*}

\noindent{\it \bf $4\times 4$ cases}\quad 
\bigskip

{\bf $\ssD-\ssD$ types.}
\begin{align*}
\ssD(2;2,2,2;2)\ssD &&\quad 
\begin{pmatrix}
s_2+s^{-1}_2&  0  & -1  & -1 \\
0& s_2+s^{-1}_2 &-1  &-1     \\
-1 & -1 & s_2+s^{-1}_2  &0  \\
-1 & -1 & 0 & s_2+s^{-1}_2  \\
\end{pmatrix}\qquad\qquad\quad
\\
\ssD(2;2,1,2;2)\ssD&&\quad 
\begin{pmatrix}
t_3 & s_1s_2^{-1}-s_1^{-1}s_2  & t_2  & t_2 \\
s_1s_2^{-1}-s_1^{-1}s_2& t_3 & t_2  & t_2     \\
t_2 &  t_2 & t_3  &s_1s_2^{-1}-s_1^{-1}s_2  \\
t_2 &  t_2 & s_1s_2^{-1}-s_1^{-1}s_2 & t_3  \\
\end{pmatrix}
\\
\ssD(1;1,2,2;2)\ssD &&\quad 
\begin{pmatrix}
t_3&  s_1^{-1}s_2-s_1s_2^{-1}  & t_1  & t_1 \\
  s_1^{-1}s_2-s_1s_2^{-1}& t_3 & t_1  & t_1    \\
-1 & -1 & s_2+s^{-1}_2  &0  \\
-1 & -1 & 0 & s_2+s^{-1}_2  \\
\end{pmatrix}\qquad
\\
\ssD(1;1,3,2;2)\ssD &&\quad 
\begin{pmatrix}
t_2 &  s_1^{-1}s_3-s_1s_3^{-1}  & t_1  & t_1 \\
  s_1^{-1}s_3-s_1s_3^{-1}& t_2 & t_1  &t_1    \\
t_2  & t_2 & t_1  & s_2^{-1}s_3-s_2s_3^{-1}  \\
 t_2 &  t_2 & s_2^{-1}s_3-s_2s_3^{-1} & t_1  \\
\end{pmatrix}
\end{align*}

\bigskip

\subsection{General case}\label{sec general}
In  Appendix \ref{sec finite} we gave several deformed Cartan matrices of finite type, and in Appendix \ref{sec affine}  several deformed Cartan matrices of affine type. In fact Appendix \ref{sec finite} contains all
maximal submatrices of stable deformed affine  Cartan matrices whose upper right and bottom left corner elements are both non zero. Appendix \ref{sec affine}
contains all non-stable  deformed affine Cartan matrices appearing in our construction. 

It is important to keep in mind that we immediately obtain many more examples by permuting $s_1,s_2,s_3$. Another set of examples is obtained by reading the data from right to left. More precisely,
the matrices $\hat C$ corresponding to  $X(c_0;c_1,\dots,c_{\ell};c_{\ell+1})Y$ and to $Y(c_{\ell+1};c_{\ell},\dots,c_1;c_0)X$ are related by conjugation
by the matrix $(\delta_{i+j,\ell})_{i,j=0}^\ell$.

The matrices listed in Appendices \ref{sec finite} and \ref{sec affine} allow us to write a deformed  affine  Cartan matrix corresponding to arbitrary data 
$X(c_0;c_1,\dots,c_{\ell};c_{\ell+1})Y$, since for larger $\ell$ the  deformed Cartan matrices stabilize. One has to follow the following procedure.

First, search the affine examples, keeping in mind the symmetries. If the matrix is found, stop.
If the matrix is not in the list, conclude that $\hat C$ is stable, that is $C_{0,\ell}=C_{\ell,0}=0$. 

Second, find the listed matrix of the finite type and of the largest size corresponding to right most colors: 
$(c_{\ell-i},\dots,c_{\ell};c_{\ell+1})Y$ (it is $2\times 2$ for all cases except $Y=\ssD$ when it is $3\times 3$). That gives the right bottom submatrix of $\hat C$.

Third, repeat for left most colors, that is, find the listed matrix of the finite type and of the largest size corresponding to  $X(c_0;c_1,\dots,c_i)$. This gives the left upper submatrix.

The rest nonzero entries of the matrix are recovered from matrices of type $\ssA$.

The final result is the superposition of matrices which are linked via diagonal entries.

\medskip

For example, the matrix $\hat C$ corresponding to $\ssB(2;3,1;3)\ssC$ is a superposition of the last matrix of type $\ssB$ in our list and of the second matrix of type $\ssC$ with necessary symmetries (the $C_{1,1}$ entry $t_2$ is common):
$$
\begin{pmatrix}
s_2^{1/2}s_3+s_2^{-1/2}s_3^{-1}   & -s_2^{1/2}-s_2^{-1/2}     &     0       \\
t_1  &  t_2 & s_3^2-s_3^{-2} \\
0 & -1 & s_2s_3^{-1}+s_2^{-1}s_3 \\
\end{pmatrix}.
$$
Similarly, matrix $\hat C$ corresponding to $\ssB(2;3,2,1;3)\ssC$ is a superposition of three: the fourth on type $\ssB$ list, he second on type $\ssA$ list and the last on type $\ssC$ list. The first two share a common $C_{1,1}$ entry $t_1$ and the last two share a common $C_{2,2}$ entry $t_3$.
$$
\begin{pmatrix}
s_2^{1/2}s_3+s_2^{-1/2}s_3^{-1}  &  -s_3^{1/2}-s_3^{-1/2}  &     0  & 0     \\
 t_3  & t_1   &  t_3 &0 \\
0 & t_1 & t_3 & -t_2(s_3+s_3^{-1})\\
0 & 0& -1 & s_1s_3^{-1}+s_1^{-1}s_3\\ 
\end{pmatrix}.
$$

\bigskip

{\bf Acknowledgments.\ }
The research of BF is supported by 
the Russian Science Foundation grant project 16-11-10316. 
MJ is partially supported by 
JSPS KAKENHI Grant Number JP19K03549. 
EM is partially supported by a grant from the Simons Foundation  
\#353831. IV is supported in part by Young Russian Mathematics award.
BF, MJ, and EM thank Kyoto University for hospitality during their visit 
in summer 2019 when part of this work was completed.

\bigskip

\end{document}